\documentclass[10pt]{amsart}
\usepackage{amssymb}
\usepackage{color}
\usepackage{amsthm}
\usepackage{mathabx}

\newcommand{\llangle}{\langle\!\langle}
\newcommand{\rrangle}{\rangle\!\rangle}
\newcommand{\Arc}{\mathcal{A}}
\newcommand{\Curve}{\mathcal{K}}

\newcommand{\Diff}{\mathcal{D}}

\newcommand{\Diffmu}{\Diff_{\mu}}

\newcommand{\Func}{\mathcal{F}}

\newcommand{\RN}{\mathbb{R}^N}

\newcommand{\supnorm}[1]{\vvvert #1\vvvert}
\newcommand{\Ember}{\mathcal{I}}
\newcommand{\Imm}{\text{Imm}}
\DeclareMathOperator{\diver}{div}
\newcommand{\grad}{\nabla}

\newtheorem{theorem}{Theorem}[section]
\newtheorem{proposition}[theorem]{Proposition}
\newtheorem{lemma}[theorem]{Lemma}

\theoremstyle{definition}

\begin{document}

\title{The geometry of whips}
\author{Stephen C. Preston}
\address{Department of Mathematics, University of Colorado, Boulder,
CO 80309-0395} \email{Stephen.Preston@colorado.edu}

\maketitle

\tableofcontents

\section{Introduction}

The purpose of this paper is to explore the geometry of the
inextensible string (or whip) in Euclidean space $\RN$, and its application as an alternative geometry in shape recognition to the geometries proposed by Michor-Mumford~\cite{MM, MM2}, Younes et al.~\cite{younes}, Klassen et al.~\cite{KSMJ}, and others. Generally $N\ge 2$, although we will assume whenever convenient that $N=2$; this simplifies formulas but does not substantially change any of the results. 

In the absence of external forces, the
string is a geodesic motion in the space of unit-speed curves in $\mathbb{R}^N$; hence it is another of the examples of partial differential equations arising as geodesic motion on an infinite-dimensional manifold which have been discovered in the wake of Arnold's~\cite{arnold} approach to hydrodynamics (other examples include the Korteweg-deVries equation~\cite{ok}, the Camassa-Holm equation~\cite{mis}, and the Hunter-Saxton equation~\cite{KM}). The equations of motion are 
\begin{align}
\frac{\partial^2 \eta}{\partial t^2} &= \frac{\partial}{\partial s} \left( \sigma \, \frac{\partial \eta}{\partial s}\right), \label{stringevolution} \\
\frac{\partial^2\sigma}{\partial s^2} - \left \lvert \frac{\partial^2 \eta}{\partial s^2}\right\rvert^2 \sigma &= -\left\lvert \frac{\partial^2 \eta}{\partial s\partial t}\right\rvert^2,
\label{stringconstraint}
\end{align}
with initial conditions $\eta(0,s) = \gamma(s)$ and $\eta_t(0,s) = w(s)$, assumed to satisfy the compatibility conditions $\lvert \gamma'(s)\rvert \equiv 1$ and $\langle \gamma'(s), w'(s)\rangle \equiv 0$.
Here $t$ is the time parameter, $s$ is the length parameter along the curve, and $\sigma$ is the tension. We will suppose the curves have one fixed and one free endpoint. We will discuss other boundary conditions in Appendix \ref{circleappendix}.

As shown in the author's companion paper~\cite{whipsandchains} (where a slightly different notation was used to make the estimates more convenient), the easiest way to handle the fixed point is to extend the curve through the fixed point by oddness to get a curve with two free endpoints. So we have a curve $\eta\colon [0,T)\times [-1,1] \to \RN$ satisfying $\eta(-s)=-\eta(s)$, and the boundary conditions for \eqref{stringconstraint} are then $\sigma(-1)=\sigma(1)=0$. We then automatically have, for any sufficiently smooth solution, that $\sigma(-s)=\sigma(s)$,
so that all even derivatives of $\eta$ and all odd derivatives of $\sigma$ vanish at $s=0$. Using a spatial discretization (the method of lines), the author proved local existence and uniqueness in the weighted energy norm 
\begin{equation}\label{weightedenergy}
E_m = \sum_{j = 0}^m \int_{-1}^1 (1-s^2)^j \lvert \partial_s^j \eta_t\rvert^2 + (1-s^2)^{j+1} \lvert \partial_s^{j+1}\eta\rvert^2 \, ds
\end{equation}
for $m\ge 3$. Our aim in this paper is to explore the geometric interpretation of this result, and on the way we will obtain a result on the dependence of solutions on the initial conditions. Specifically, we show that for a fixed $\gamma$, the solution is differentiable but not $C^1$ as a function of $w$.

The system \eqref{stringevolution}--\eqref{stringconstraint} have been known and studied for hundreds of years, although only recently has there been a rigorous proof of well-posedness for the full nonlinear system (see \cite{whipsandchains}). Thess et al. \cite{thess} studied these equations on the circle as a toy model of hydrodynamical blowup, and one of our motivations in studying the geometrical aspects was to see just how far this analogy goes (comparing to Arnold's geometrical approach to hydrodynamics~\cite{arnold}). See also Serre~\cite{serre} and Reeken \cite{reeken, reeken1, reeken2} for analytical aspects of these equations, and references cited in \cite{whipsandchains} for physical discussions of their properties.

First, in Section \ref{submanifold} we define precisely the manifold structure on the space of curves and show that the inextensible curves form a smooth submanifold. Since we want to use the implicit function theorem to do this, we want to work on a Hilbert manifold, and the result \eqref{weightedenergy} suggests the appropriate Sobolev topology is the weighted space
\begin{equation}\label{curvespace}
\Curve^m = \{ \eta\colon [-1,1]\to \RN : R\eta=\eta \; \text{and}\; \lVert \eta\rVert_{j,j} < \infty \text{ for $0\le j\le m$}\} 
\end{equation}
where $R$ is the odd reflection operator $(R\eta)(s)=-\eta(-s)$ (which is bounded in any weighted Sobolev topology) and the weighted energy norm is defined by
\begin{equation}\label{weightednorm}
\lVert \eta\rVert^2_{j,k} =  \int_{-1}^1 (1-s^2)^j \lvert \partial_s^k\eta(s)\rvert^2 \, ds.
\end{equation}
We will first prove that the configuration space 
\begin{equation}\label{arcspace}
\Arc^m = \{ \eta\in \Curve^m : \lvert \eta'\rvert \equiv 1 \text{ and } R\eta=\eta\}
\end{equation}
is a smooth submanifold of $\Curve^m$ for $m\ge 4$, with tangent space given by 
\begin{equation}\label{arctangentspace}
T_{\gamma}\Arc^m = \{ v\in \Curve^m : \langle v', \gamma'\rangle \equiv 0 \text{ and } Rv=v\}.
\end{equation}
(The space $\Arc^m$ \emph{is} a smooth manifold as long as $m\ge 2$, but it fails to be a submanifold of $\Curve^m$ if $m=3$ or $m=4$.)


In Section \ref{geodesicsection} we define a weak Riemannian metric on $\Curve^m$ (and hence $\Arc^m$), given for vector fields $u$ and $v$ along a curve $\eta$, by 
\begin{equation}\label{curvesmetric}
\llangle u, v\rrangle_{\eta} = \int_{-1}^1 \langle u(s), v(s)\rangle \, ds.
\end{equation} 
Although we have a smooth metric
on a smooth manifold, the Levi-Civita connection is not smooth, and thus the geodesic
equation is not an ordinary differential equation. (This is typical behavior for a weak metric on an infinite-dimensional manifold; 
the Levi-Civita connection is unique if it exists, but it is not even guaranteed to exist if the Riemannian metric does not generate 
the topology of the manifold; see \cite{em}.) This is reflected in the fact that the right side of \eqref{stringevolution} is unbounded in any Sobolev topology. Hence we cannot get solutions of \eqref{stringevolution} by Picard iteration, and thus we are not guaranteed smooth dependence on initial conditions. 

In Section \ref{notc1section} we study the dependence of a solution $\eta(1,s)$ on the initial velocity field $w(s)$, given a fixed initial position $\gamma(s)$. The Riemannian exponential map on $\Arc^m$ is given by 
\begin{equation}\label{exponential}
\exp_{\gamma}(w) = \eta(1), \quad \text{where $\eta$ solves \eqref{stringevolution}--\eqref{stringconstraint}.}
\end{equation}
We will prove that  for $m\ge 3$, as long as $\gamma\in \Arc^{m+1}$, the exponential map is defined and continuous as a map from some open subset of $T_{\gamma}\Arc^m$ into $\Arc^m$. In fact we will show the exponential map is differentiable but not continuously differentiable. Our method is similar to that of Constantin-Kolev~\cite{ck} and Constantin-Kappeler-Kolev-Topalov~\cite{ckkt}: after establishing bounds on the linearized equation (to prove differentiability), we show that there are conjugate points arbitrarily close to $0$ by working out a very explicit special case (a string rotating like a rigid rod). If the exponential map were $C^1$, then the fact that $(d\exp_{\gamma})_0$ is the identity would imply by the inverse function theorem that there is a neighborhood of $0$ on which there are no conjugate points.

The failure of the exponential map to be $C^1$ has two consequences: one is that no geodesic can be minimizing, no matter how short (a conjugate point always implies the existence of a length-shortening variation); and the other is that we do not necessarily have geodesics joining two arcs (even nonminimizing geodesics), since the inverse function theorem is normally used to obtain this result. Hence the geometry of $\Arc^m$ experiences some genuinely infinite-dimensional phenomena. On the other hand, the distance function generated is nondegenerate, since $\Arc^m$ is a Riemannian submanifold of the geometrically flat space $\Curve^m$.

In Section \ref{curvaturesection} we compute the sectional curvature of $\Arc^m$, showing that it is always positive. Intuitively, this implies stability of geodesics by the Rauch comparison theorem~\cite{CE}; however, the fact that it is unbounded above implies that the rigorous study of stability via curvature estimates faces some technical difficulties. In fact even if we could apply the Rauch theorem, the presence of conjugate points arbitrarily close to the identity makes it impossible to get any rigorous information about the growth of Jacobi fields. 

Finally in Section \ref{michormumfordsection} we compare the geometry of $\Arc$ in the metric \eqref{curvesmetric} to other geometries on spaces of unparametrized curves, especially the $L^2$ metric on the space of parametrized curves modulo reparametrizations, studied by Michor-Mumford in \cite{MM}. Both metrics are too weak to preserve all the properties one expects in finite-dimensional geometry, but the metric \eqref{curvesmetric} has a nondegenerate distance while the Michor-Mumford metric gives a degenerate distance. We compare the geodesic equation \eqref{stringevolution}--\eqref{stringconstraint} with the geodesic equation on $\Arc$ obtained from the Michor-Mumford metric: the primary difference is that our metric is essentially a submanifold metric, while the Michor-Mumford metric is essentially a Riemmanian submersion metric on a homogeneous space. Since the submanifold metric \eqref{curvesmetric} is related to the physical $L^2$ metric and has a nondegenerate distance, we hope it may be of interest in shape recognition applications.

In Appendix \ref{circleappendix} we show how the results of this paper change if we consider periodic boundary conditions for the system \eqref{stringevolution}--\eqref{stringconstraint}. Many results actually become easier (for example, we can work in ordinary Sobolev spaces on the circle rather than weighted Sobolev spaces on the interval), and the essential features are the same. Then in Appendix \ref{nolengthappendix} we explore what happens if we remove the constraint that all our odd curves have length $1$; we see that many of the results break down in this case. 

Some of these results (in particular the nonnegativity of the sectional curvature in Theorem \ref{curvaturethm}) were first obtained by Victor Yudovich, but not to my knowledge published. Alexander Shnirelman introduced me to this problem, and I thank him for many useful discussions on it.

\section{The manifold structure of the arc space $\Arc^k$}\label{submanifold}

For this section we assume all curves map into $\mathbb{R}^2$, for simplicity. The space $\Curve^m$ of odd curves in $\mathbb{R}^2$ with the topology \eqref{curvespace} is obviously a manifold, as a linear space. 
The topology defined by the seminorms \eqref{curvespace} is sufficiently strong to make $\Arc^m$ a submanifold of $\Curve^m$ when $m\ge 4$, but when $m=2$ or $m=3$ the topology is almost but not quite strong enough. The difficulty here is that a bound on the norms $\lVert \eta\rVert_j$ for $0\le j\le 3$ is not sufficient to ensure boundedness of $\sup_{-1< s<1} \lvert \eta'(s)\rvert$, as shown by the example \begin{equation}\label{counterexample}
\eta'(s) = \arctan{\big(\ln{(1-s^2)}\big)}.
\end{equation}

First we recall the following lemma from \cite{whipsandchains}, relating the weighted Sobolev norms \eqref{weightednorm} and the weighted supremum norm
\begin{equation}\label{weightedsup}
\supnorm{f}^2_{j,k} = \sup_{-1\le s\le 1} (1-s^2)^j \lvert f^{(k)}(s)\rvert^2.
\end{equation}

\begin{lemma}\label{weightedestimateslemma}
For any real $j>0$ and any nonnegative integer $k$, and any smooth function $f$, we have the following estimates for the norms \eqref{weightednorm} and \eqref{weightedsup}:
\begin{align}
\lVert f\rVert^2_{j-1, k} &\lesssim \lVert f\rVert^2_{j,k} + \lVert f\rVert^2_{j+1, k+1}, \label{weightedpoincare} \\
\supnorm{f}^2_{j,k} &\lesssim \lVert f\rVert^2_{j,k} + \lVert f\rVert^2_{j+1, k+1}. \label{weightedsobolev}
\end{align}
\end{lemma}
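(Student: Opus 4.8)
The plan is to reduce both inequalities to the case $k=0$ and then treat them by elementary weighted integration-by-parts arguments, exploiting that for smooth $f$ every boundary contribution at $s=\pm 1$ vanishes whenever the weight $(1-s^2)^j$ with $j>0$ appears. Since each term in \eqref{weightedpoincare} and \eqref{weightedsobolev} involves only $f^{(k)}$ and $f^{(k+1)}$, I set $g=f^{(k)}$ (still smooth) and prove
\[
\lVert g\rVert^2_{j-1,0} \lesssim \lVert g\rVert^2_{j,0} + \lVert g\rVert^2_{j+1,1}, \qquad \supnorm{g}^2_{j,0} \lesssim \lVert g\rVert^2_{j,0} + \lVert g\rVert^2_{j+1,1}.
\]

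For the Poincar\'e-type bound \eqref{weightedpoincare} the key is to integrate against the weight $s(1-s^2)^j$, whose derivative is $\frac{d}{ds}\big[s(1-s^2)^j\big] = (1-s^2)^{j-1}\big[1-(2j+1)s^2\big]$; this recovers the singular weight $(1-s^2)^{j-1}$ without introducing the factor $1/s$ that the naive antiderivative of $(1-s^2)^{j-1}$ would produce. Using $s^2(1-s^2)^{j-1} = (1-s^2)^{j-1} - (1-s^2)^j$ and integrating by parts (the boundary terms vanish because $j>0$ and $g$ is bounded) yields the exact identity
\[
2j\int_{-1}^1 (1-s^2)^{j-1}|g|^2\,ds = (2j+1)\int_{-1}^1 (1-s^2)^j |g|^2\,ds + 2\int_{-1}^1 s(1-s^2)^j\langle g,g'\rangle\,ds.
\]
The cross term is controlled by Young's inequality after splitting $(1-s^2)^j|g||g'|$ as $\big[(1-s^2)^{(j-1)/2}|g|\big]\big[(1-s^2)^{(j+1)/2}|g'|\big]$; choosing the parameter so as to absorb an $\epsilon\lVert g\rVert^2_{j-1,0}$ back into the left-hand side (possible since $j>0$) leaves exactly $\lVert g\rVert^2_{j,0}+\lVert g\rVert^2_{j+1,1}$ on the right, which is \eqref{weightedpoincare}.

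For the embedding \eqref{weightedsobolev} I fix $s_0$, taking $s_0\ge 0$ by the symmetry $s\mapsto -s$, and apply the fundamental theorem of calculus to $(1-s^2)^j|g(s)|^2$ on $[s_0,1]$; the value at $s=1$ vanishes for smooth $g$ since $j>0$. This gives
\[
(1-s_0^2)^j |g(s_0)|^2 = \int_{s_0}^1 \Big[2js(1-s^2)^{j-1}|g|^2 - 2(1-s^2)^j\langle g,g'\rangle\Big]\,ds,
\]
so bounding $|s|\le 1$ and applying Cauchy--Schwarz to the second term shows the supremum is $\lesssim \lVert g\rVert^2_{j-1,0}+\lVert g\rVert^2_{j+1,1}$, which the already-established \eqref{weightedpoincare} converts into $\lVert g\rVert^2_{j,0}+\lVert g\rVert^2_{j+1,1}$.

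The main obstacle is the weight bookkeeping: every natural manipulation (the FTC, or integrating against $(1-s^2)^{j-1}$ directly) tends to generate the more singular weight $(1-s^2)^{j-1}$, which is not among the allowed terms on the right. The whole argument therefore hinges on first proving \eqref{weightedpoincare}, which is precisely the statement that this extra power of $(1-s^2)^{-1}$ can always be traded back for the permitted weights, and on the strict positivity of $j$, which is what makes both the absorption step and the vanishing of the endpoint terms go through.
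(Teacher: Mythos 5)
Your proof is correct. Note that the paper itself does not prove this lemma at all --- it is simply recalled from the companion paper \cite{whipsandchains} --- so there is no in-text argument to compare against; what you have written is a complete, self-contained substitute. Both halves check out: the identity
\[
2j\lVert g\rVert^2_{j-1,0} = (2j+1)\lVert g\rVert^2_{j,0} + 2\int_{-1}^1 s(1-s^2)^j\langle g,g'\rangle\,ds
\]
follows correctly from $\tfrac{d}{ds}\bigl[s(1-s^2)^j\bigr]=(1-s^2)^{j-1}\bigl[1-(2j+1)s^2\bigr]$ and the vanishing of the boundary terms for $j>0$, and the absorption of $\epsilon\lVert g\rVert^2_{j-1,0}$ is legitimate because that quantity is finite for smooth $f$ precisely when $j>0$ (this finiteness deserves one explicit sentence, since absorbing an infinite term would be vacuous). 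The choice of test function $s(1-s^2)^j$ rather than a naive antiderivative of $(1-s^2)^{j-1}$ is exactly the right device to avoid a $1/s$ singularity, and your reduction to $k=0$ via $g=f^{(k)}$ is harmless since each inequality involves only two consecutive derivatives. The sup-norm bound then correctly rides on the Poincar\'e inequality to convert the unavoidable $(1-s^2)^{j-1}$ weight back into the permitted ones. The only cosmetic remark is that the constants implicit in $\lesssim$ degenerate as $j\to 0^+$ (the absorption costs a factor of $1/j$), which is consistent with the paper's counterexample \eqref{counterexample} showing the estimate genuinely fails at $j=0$.
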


\begin{theorem}\label{arcsubmfd}
If $m\ge 1$, then the space $\Arc^{m+1}$ defined by \eqref{arcspace} is a $C^{\infty}$ Hilbert manifold. If $m\ge 3$, then $\Arc^{m+1}$ is a $C^{\infty}$ Hilbert submanifold of the space $\Curve^{m+1}$ defined by \eqref{curvespace}.
\end{theorem}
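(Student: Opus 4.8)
The plan is to prove the two assertions by different methods: an \emph{intrinsic} chart construction for the manifold structure, which survives all the way down to $m\ge 1$, and the regular value (implicit function) theorem for the submanifold structure, which is where the hypothesis $m\ge 3$ is genuinely needed.

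For the manifold structure I would use the angle function. Since every $\eta\in\Arc^{m+1}$ is a unit-speed curve into $\mathbb{R}^2$ with $\eta(0)=0$, I can write $\eta'(s)=(\cos\theta(s),\sin\theta(s))$ for a continuous $\theta$, unique once $\theta(0)$ is fixed; the condition $R\eta=\eta$ forces $\eta'$ to be even, hence $\theta$ to be even. Recovering $\eta(s)=\int_0^s(\cos\theta,\sin\theta)\,d\tau$ shows that $\eta\leftrightarrow\theta$ is a bijection of $\Arc^{m+1}$ onto an open subset of the Hilbert space of even functions $\theta$ whose derivatives satisfy the weighted bounds $\int_{-1}^1(1-s^2)^{j+1}|\theta^{(j)}|^2\,ds<\infty$ for $1\le j\le m$ that are forced by $\eta\in\Curve^{m+1}$. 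Because two such angle charts (based at different reference curves, or using different branches of $\theta(0)$) differ by adding a constant, their transition maps are affine and hence $C^\infty$; the only nonroutine point is that $\theta\mapsto(\cos\theta,\sin\theta)$ and its inverse are smooth Nemytskii maps on the weighted space, which holds already for $m\ge 1$ via Lemma~\ref{weightedestimateslemma}. This produces a smooth atlas with no constraint-map regularity required, which explains why this part survives down to $m\ge 1$.

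For the submanifold structure I would apply the regular value theorem to the quadratic constraint map $\Phi\colon\Curve^{m+1}\to\Func^m$, $\Phi(\eta)=|\eta'|^2-1$, where $\Func^m$ is the Hilbert space of even scalar functions $h$ with $\|h\|^2=\int_{-1}^1 h^2\,ds+\sum_{j=1}^m\int_{-1}^1(1-s^2)^{j+1}|h^{(j)}|^2\,ds$ (one weight stronger than $\Curve$, chosen precisely so that antidifferentiation lands back in $\Curve^{m+1}$). Differentiating gives $D\Phi(\gamma)v=2\langle\gamma',v'\rangle$, whose kernel is exactly the claimed tangent space \eqref{arctangentspace}. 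For the splitting I would exhibit the explicit right inverse $S(h)=\tfrac12\big(\int_0^s h\big)\gamma'$; a one-line computation using $|\gamma'|\equiv1$ (so $\langle\gamma',\gamma''\rangle\equiv0$) gives $D\Phi(\gamma)\circ S=\mathrm{id}$, so that $P=S\circ D\Phi(\gamma)$ is a continuous projection of $\Curve^{m+1}$ onto a closed complement of $\ker D\Phi(\gamma)$, \emph{provided} $S$ is bounded. Since $h$ even makes $\int_0^s h$ odd, $S(h)$ is odd and respects $R$, and $D\Phi(\gamma)$ indeed lands in even functions, so all parity constraints are consistent.

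The hard part, and the step that forces $m\ge 3$, is purely analytic: proving that $\Phi$ is bounded (hence, being quadratic, automatically $C^\infty$) into $\Func^m$, and that $S$ is bounded from $\Func^m$ into $\Curve^{m+1}$. Both reduce to weighted multiplication estimates, for $\langle\eta',\eta'\rangle$ and for the product $\big(\int_0^s h\big)\gamma'$: distributing derivatives by Leibniz produces cross terms in which intermediate derivatives must be controlled in the weighted supremum norm \eqref{weightedsup} and then paired with a weighted $L^2$ factor, using the weighted Poincaré and Sobolev inequalities \eqref{weightedpoincare}--\eqref{weightedsobolev} (the antiderivative built into $S$ is exactly what lets the Poincaré inequality absorb the extra weight carried by $\Func^m$). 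These estimates require control of $\supnorm{\eta'}_{0,0}$ together with the weighted sup norms of the first few derivatives of $\eta'$, which by \eqref{weightedsobolev} is available only once sufficiently many weighted $L^2$ derivatives are finite; in the borderline low-regularity case $\Arc^3$ this control fails, as the example \eqref{counterexample} shows, and correspondingly $S$ is unbounded and $\ker D\Phi(\gamma)$ does not split. Once boundedness of $\Phi$ and $S$ is established for $m\ge 3$, the regular value theorem yields submanifold charts and identifies $T_\gamma\Arc^{m+1}$ with $\ker D\Phi(\gamma)$, completing the proof.
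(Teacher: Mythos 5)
Your intrinsic angle--chart construction for the first assertion is essentially the paper's own argument, and your overall plan for the second assertion (regular value theorem applied to $\Phi(\eta)=\lvert\eta'\rvert^2-1$, with $m\ge 3$ entering only through control of $\sup\lvert\eta'\rvert$) is a legitimate alternative to the paper's route, which instead builds an explicit flattening chart $\eta'=e^{\xi}$ on $\Curve^{m+1}$ under which $\Arc^{m+1}$ becomes the closed subspace $\{\mathrm{Re}\,\xi=0\}$. However, there is a genuine gap at your key analytic step: the right inverse $S(h)=\tfrac12\big(\int_0^s h\big)\gamma'$ is \emph{not} bounded from $\Func^m$ into $\Curve^{m+1}$, and no weighted multiplication estimate will make it so. Writing $F=\tfrac12\int_0^s h$, the Leibniz expansion of $D^{m+1}(F\gamma')$ contains the term $F\,\gamma^{(m+2)}$, which requires one more derivative of $\gamma$ than the $\Curve^{m+1}$ topology provides; for a generic $\gamma\in\Arc^{m+1}$ this term is not even a function. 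This is exactly the loss-of-derivative phenomenon that recurs throughout the paper: the orthogonal projection $z\mapsto z-(\sigma\gamma')'$ of Proposition \ref{orthoprojprop} and the second fundamental form of Lemma \ref{secondfundy} are only defined for $\gamma\in\Arc^{m+2}$, for the same reason (a product with $\gamma'$ gets differentiated to top order). So your projection $P=S\circ D\Phi(\gamma)$ does not exist as a bounded operator at a general point of $\Arc^{m+1}$, and the claim that boundedness of $S$ ``reduces to weighted multiplication estimates'' for $m\ge 3$ is false as stated. (A smaller inaccuracy: in a Hilbert space every closed subspace is complemented, so ``$\ker D\Phi(\gamma)$ does not split'' is never the obstruction at low regularity; what actually fails for small $m$ is differentiability of the relevant maps, cf.\ the paper's direct verification that the inclusion $\Arc^2\hookrightarrow\Curve^2$ is not differentiable.)

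The gap is repairable, and the repair is instructive: put the antiderivative outside the product rather than inside. Define $\tilde S(h)(s)=\int_0^s\tfrac12 h(\tau)\,\gamma'(\tau)\,d\tau$. Then $(\tilde Sh)'=\tfrac12 h\gamma'$, so $D\Phi(\gamma)\circ\tilde S=\mathrm{id}$ still holds (using only $\lvert\gamma'\rvert\equiv1$), the parity is correct ($h$ even and $\gamma'$ even make $\tilde Sh$ odd), and now $D^{m+1}\tilde S(h)=D^m\big(\tfrac12 h\gamma'\big)$ involves only $\gamma^{(m+1)}$. Its boundedness then genuinely does follow from the estimates you describe: the extreme terms are $h\,\gamma^{(m+1)}$, handled by $\sup\lvert h\rvert<\infty$ (which is where $m\ge3$ is needed, via the unweighted Sobolev inequality combined with \eqref{weightedpoincare} exactly as in \eqref{supbound}), and $h^{(m)}\gamma'$, whose weighted $L^2$ norm is precisely the top $\Func^m$ seminorm of $h$; the intermediate terms are controlled by \eqref{weightedpoincare}--\eqref{weightedsobolev}. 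With $\tilde S$ in place of $S$, surjectivity of $D\Phi(\gamma)$ with (automatically) complemented kernel follows and the regular value theorem applies, so the rest of your argument goes through.
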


\begin{proof}
If $\eta\in \Curve^2$, then we have $\int_{-1}^1 (1-s^2)^2 \lvert \eta''(s)\rvert^2 \, ds<\infty$. Hence by the standard Sobolev inequality on $[-1+\varepsilon, 1-\varepsilon]$ for any $\varepsilon>0$, we see that $\eta$ is in $C^1[-1+\varepsilon, 1-\varepsilon]$. Hence it makes sense to impose the condition $\lvert \eta'(s)\rvert \equiv 1$ for $s\in(-1,1)$, so that $\Arc^2$ is a closed subset of $\Curve^2$.

Furthermore for any such $\eta$, we can write $\eta'(s)=\big( \cos{\theta(s)}, \sin{\theta(s)}\big)$, where $\theta$ is uniquely determined once $\theta(0)$ is chosen. (Since $\eta'$ is continuous on $(-1,1)$, so is $\theta$.) We can easily compute that we have $\eta\in\Arc^2$ if and only if $\theta\in \Func^1[\mathbb{R}]$, where 
\begin{equation}\label{func}
\Func^m[\mathbb{F}] = \Big\{ f\colon [-1,1]\to \mathbb{\mathbb{F}} : \int_{-1}^1 (1-s^2)^{j+1} \lvert f^{(j)}(s)\rvert^2 \, ds < \infty \text{ for $0\le j\le m$}\Big\}
\end{equation}
for $\mathbb{F}=\mathbb{R}$ or $\mathbb{C}$.
Now $\Func^1$ is a Hilbert space, and the map $\eta\mapsto \theta$ defines coordinate charts (for example, on the set of $\eta$ with $\eta'(0)\ne v$ for any fixed unit vector $v$), for which the coordinate transition maps are trivially $C^{\infty}$. In this way we get a smooth Hilbert manifold structure on $\Arc^2$, and the same process will give a manifold structure on any $\Arc^{m+1}$: we just have to check that $\eta\in \Arc^{m+1}$ if and only if $\theta\in \Func^m[\mathbb{R}]$, which will follow from the next result.

To actually obtain $\Arc^{m+1}$ as a submanifold of $\Curve^{m+1}$, we can construct a coordinate chart on $\Curve^{m+1}$ which makes this obvious. We simply write $\eta'(s) = \big( e^{\psi(s)} \cos{\theta(s)}, e^{\psi(s)} \sin{\theta(s)}\big)$ for functions $\psi$ and $\theta$. Obviously if we think of $\mathbb{R}^2$ as $\mathbb{C}$, this is just the exponential $\eta'(s) = e^{\xi(s)}$ where $\xi(s)=\psi(s)+i\theta(s)$. Now our claim is that if $\psi$ is bounded (equivalently, if $\lvert \eta'\rvert$ is bounded both above and below away from zero), then $\eta\in\Curve^m$ if and only if $\xi\in\Func^{m-1}[\mathbb{C}]$. This is relatively easy to check using the F\`aa di Bruno formula for the derivative of a composition: we have $\eta'=e^{\xi}$ and $\xi=\ln{\eta'}$, so that for $m\ge 1$ the formula yields
\begin{align}
\frac{d^{m+1}\eta}{ds^{m+1}} &= e^{\xi} \sum \frac{m!}{k_1!\cdots k_m!} \prod_{j=1}^m \left( \frac{\xi^{(j)}(s)}{j!}\right)^{k_j}, \label{etatoxi} \\
\frac{d^m\xi}{ds^m} &= \sum \frac{m!}{k_1!\cdots k_m!} (-1)^{\ell-1} (\ell-1)! (\eta')^{\ell} \prod_{j=1}^m \left(\frac{\eta^{(j+1)}}{j!}\right)^{k_j}, \label{xitoeta}
\end{align}
where in the second sum we set $\ell = k_1+\cdots+k_m$, and where both sums are taken over all nonnegative integers $k_j$ such that $1\cdot k_1 + \cdots + m\cdot k_m = m$. 

We can now prove for $m\ge 1$ that $\xi\in \Func^m[\mathbb{C}]$ and $\sup {\Re \xi} <\infty$ implies $\eta\in \Curve^{m+1}$.
Ignoring the specific constants and using Cauchy-Schwarz, \eqref{etatoxi} yields
\begin{align*}
\lVert \eta\rVert^2_{m+1, m+1} &\lesssim \sup e^{2\Re\xi} 
\sum \int_{-1}^1 (1-s^2)^{m+1} 
\prod_{j=1}^m \lvert \xi^{(j)}(s)\rvert^{2k_j} \, ds \\
&\lesssim \sup e^{2\Re\xi} \sum \int_{-1}^1 (1-s^2) \prod_{j=1}^m \left( (1-s^2)^j \lvert \xi^{(j)}(s)\rvert^2\right)^{k_j} \, ds.
\end{align*}
Now if $j\le m-2$, we can pull out the term $\lvert (1-s^2)^j \lvert \xi^{(j)}\rvert^2$, using Lemma \ref{weightedestimateslemma} to get 
$$ \supnorm{\xi}^2_{j,j} \lesssim \lVert \xi\rVert^2_{j,j} + \lVert \xi\rVert^2_{j+1,j+1} \lesssim 
\lVert \xi\rVert^2_{j+1,j} + \lVert \xi\rVert^2_{j+2,j+1} + \lVert \xi\rVert^2_{j+3,j+2} < \infty$$ 
since $j+2\le m$ and $\xi\in \Func^m$. Hence we only need to worry about bounding
\begin{equation}\label{highestorder}
\int_{-1}^1 (1-s^2)^{m+1} \lvert \xi^{(m-1)}(s)\rvert^{2k_{m-1}} \lvert \xi^{(m)}(s)\rvert^{2k_m} \, ds.
\end{equation}
Since $(m-1)k_{m-1} + mk_m \le m$, we cannot have both $k_{m-1}$ and $k_m$ nonzero. It is then easy to check that in all possible cases, the term \eqref{highestorder} can be bounded in terms of the $\Func^m$ norm of $\xi$. 

We can similarly prove for $m\ge 1$ that $\eta\in \Curve^{m+1}$ and $\inf\lvert \eta'\rvert > 0$ implies $\xi\in \Func^m$, by using formula \eqref{xitoeta}. We note as a consequence that when $\xi$ is purely imaginary (corresponding to $\eta\in \Arc^m$), we get a bijective correspondence between an open subset of $\Arc^{m+1}$ and an open subset of $\{0\} \times \Func^m[\mathbb{R}]$ for any $m\ge 1$. This yields coordinate charts on $\Arc^{m+1}$ for $m\ge 1$.

Now to show that $\Arc^{m+1}$ is a submanifold of $\Curve^{m+1}$ for $m\ge 3$, we want to show that $m\ge 3$ implies that the coordinate chart $\eta'\to e^{\xi}$ gives a bijection between $\Curve^{m+1}$ and $\Func^m[\mathbb{C}]$, and the only thing remaining after the above estimates is getting upper and lower bounds on $\lvert \eta'\rvert$ and $\Re \xi$. Ideally we would use \eqref{weightedsobolev}, but that estimate fails since our case corresponds to $j=0$ (as mentioned in \eqref{counterexample}). Instead we use the usual (unweighted) Sobolev inequality on $[-1,1]$ to get
$$
\sup_{-1\le s\le 1} \lvert \psi(s)\rvert^2 \lesssim \int_{-1}^1 \lvert \psi(s)\rvert^2 + \int_{-1}^1 \lvert \psi'(s)\rvert^2 \, ds = \lVert \psi\rVert^2_{0,0} + \lVert \psi\rVert^2_{0,1},$$
and then apply \eqref{weightedpoincare} to get
\begin{equation}\label{supbound}
\begin{split}
\supnorm{\psi}^2_{0,0} &\lesssim \lVert \psi\rVert^2_{1,0} + \lVert \psi\rVert^2_{2,1} + \lVert \psi\rVert^2_{1,1} + \lVert \psi\rVert^2_{2,2} \\
&\lesssim \lVert \psi\rVert^2_{1,0} + \lVert \psi\rVert^2_{2,1} + \lVert\psi\rVert^2_{3,2} + \lVert \psi\rVert^2_{4,3}.
\end{split}
\end{equation}
Thus if $\psi=\Re \xi$, the fact that $\xi\in\Func^3[\mathbb{C}]$ implies $\psi$ is bounded. Similarly $\lvert \eta'\rvert$ can be bounded in terms of the norm of $\Curve^4$, so that the space of curves with $\lvert \eta'\rvert$ bounded away from zero is an open subset of $\Curve^4$. 

We conclude that for $m\ge 3$, the map $\eta\mapsto\xi$ is a coordinate chart on $\Curve^{m+1}$ which models $\Curve^{m+1}$ on the Hilbert space $\Func^m[\mathbb{C}]$. Furthermore locally the set $\Arc^{m+1}$ is mapped bijectively under this coordinate chart to the closed subspace $\{0\}\times \Func^m[\mathbb{R}]$, so that $\Arc^{m+1}$ is a Hilbert submanifold of $\Curve^{m+1}$.
\end{proof}

We note that on the way, we actually establish that $\Arc^{m+1}$ is a Banach submanifold of the slightly modified space $\overline{\Curve}^{m+1}$ for $m\ge 1 $, where the topology on $\overline{\Curve}^{m+1}$ is given by the Banach norm
$$ \sum_{j=0}^m \lVert \eta\rVert^2_{j,j} + \supnorm{\eta}^2_{0,1}.$$
For $m\ge 3$ the spaces $\Curve^{m+1}$ and $\overline{\Curve}^{m+1}$ are isomorphic since the extra term $\supnorm{\eta}^2_{0,1}$ becomes redundant due to \eqref{supbound}.

We can show directly that $\Arc^2$ is not a Hilbert submanifold of $\Curve^2$ (and similarly that $\Arc^3$ is not a Hilbert submanifold of $\Curve^3$) by the following calculation: we give $\Arc^2$ the coordinate chart $\Func^1$ and view $\Curve^2$ as a Hilbert space with the identity coordinate chart, and then the embedding of $\Arc^2$ into $\Curve^2$ is given in coordinates by the map $\Ember\colon \theta\mapsto (\cos{\theta}, \sin{\theta})$. The derivative of $\Ember$ at $\omega\in T_{\theta}\Func^1\cong \Func^1$ must be given by 
$$ \zeta \equiv D\Ember(\theta)(\omega) = \big( -\omega \sin{\theta}, \omega\cos{\theta}\big).$$ 
However if $\omega\in\Func^1$ and $\theta\in\Func^1$, we do not necessarily have $\zeta\in \Curve^2$, since 
$$ \lVert \zeta\rVert^2_{2,2} = \int_{-1}^1 (1-s^2)^2 \Big( \omega(s)^2 + \omega'(s)^2 + \omega(s)^2 \theta'(s)^2 \Big) \, ds,$$
and we need a bound on $\sup \omega$ in order for the last term to be bounded by $\lVert \theta\rVert^2_{2,1}$. But there is no reason any $\omega\in \Func^1$ has to have bounded supremum, due to examples like \eqref{counterexample}. Hence the embedding of $\Arc^2$ into $\Curve^2$ is not even differentiable, and so $\Arc^2$ cannot be a smooth submanifold of $\Curve^2$.

\section{The geodesic equation}\label{geodesicsection}

Since the space $\Curve^m$ defined by \eqref{curvespace} is a Hilbert space, it has an obvious Riemannian metric given by the Hilbert norm. Geodesics in this metric are always of the form $\eta(t) = \eta(0) + \eta'(0) t$, so the exponential map is $\exp_{\gamma}(w) = \gamma+tw$. This formula is the same regardless of whether we define the Riemannian metric by the weighted Sobolev norm \eqref{weightednorm}, as in
\begin{equation}\label{sobolevriemannian}
\llangle u, v\rrangle_{\gamma, m} = \int_{-1}^1 \sum_{j=0}^m (1-s^2)^j \langle D^ju(s), D^jv(s)\rangle \, ds, 
\end{equation}
or the weaker norm given by the kinetic energy,
\begin{equation}\label{kinetic}
\llangle u, v\rrangle_{\gamma} = \int_{-1}^1 \langle u(s), v(s)\rangle \, ds,
\end{equation}
which of course is \eqref{sobolevriemannian} when $m=0$.
However, geodesics on the submanifold $\Arc^m$ will be different depending on which choice we make. The natural choice from the perspective of physics is \eqref{kinetic}, which is a weak metric (it is not equivalent to the Hilbert norm). 

Formula \eqref{kinetic} obviously gives a smooth metric on the manifold $\Curve^m$, while as discussed in Section \ref{submanifold} the space $\Arc^m$ is a smooth submanifold of $\Curve^m$ if $m\ge 4$; nonetheless the connection on $\Arc^m$ is \emph{not} smooth. The reason for this is that the connection of a submanifold is obtained from the connection on the full manifold by orthogonal projection, and the orthogonal projection operator is not smooth (using the metric \eqref{kinetic}). In other situations when we get a smooth ODE on a submanifold, it is due to smoothness of the orthogonal projection: see for example \cite{em} and \cite{MP}.

\subsection{The orthogonal projection}

We will show in this subsection that the orthogonal projection is intimately related to the ODE \eqref{stringconstraint}. Since it will appear repeatedly, we summarize the main properties of the Green function for it, as proved in \cite{whipsandchains}.

\begin{theorem}\label{greenthm}
Let $G\colon [-1,1]\times [-1,1]\to \mathbb{R}$ denote the Green function of \eqref{stringconstraint}, satisfying
\begin{equation}\label{greenfunction}
\frac{\partial^2G}{\partial s^2} - \lvert \gamma''(s)\rvert^2 G(s,x) = -\delta(s-x), \qquad G(1,x)=G(-1,x)=0,
\end{equation}
so that the solution of \eqref{stringconstraint} is 
\begin{equation}\label{ODEsolgreen}
\sigma(s) = \int_{-1}^1 G(s,x) \lvert \eta_{tx}\rvert^2 \, dx.
\end{equation}
Then $G(s,x)=G(x,s)$ for all $s$ and $x$. In addition $G(s,x)\ge 0$ for all $s$ and $x$, and $G(s,x)=0$ only on the boundary.
Furthermore if $\overline{G}(s,x) = \frac{1}{2}\big[ G(s,x) + G(s,-x)\big]$ is the even symmetrized Green function, then \begin{equation}\label{greenupper}
\overline{G}(s,x) \le \min\{1-\lvert s\rvert, 1-\lvert x\rvert\}\end{equation}
and \begin{equation}\label{greenlower}
G(s,x) \ge (1-\lvert s\rvert)(1-\lvert x\rvert) e^{-\varrho}/(1+\varrho)\quad \text{where}\quad \varrho = \int_0^1 (1-\lvert s\rvert) \lvert \eta_{ss}\rvert^2 \,ds.
\end{equation}
\end{theorem}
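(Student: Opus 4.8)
The plan is to realize $G$ as the Green function of the self-adjoint Sturm--Liouville operator $L = \partial_s^2 - q(s)$ with $q(s) = \lvert\gamma''(s)\rvert^2 \ge 0$ and Dirichlet conditions, and to build it explicitly from two fundamental solutions. Let $\phi$ solve $\phi''=q\phi$ with $\phi(-1)=0$, $\phi'(-1)=1$, and let $\psi$ solve the same equation with $\psi(1)=0$, $\psi'(1)=-1$; since $\gamma$ is odd, $q$ is even, so $\psi(s)=\phi(-s)$. Writing the solution of \eqref{greenfunction} in product form,
\[
G(s,x) = \frac{\phi(\min\{s,x\})\,\psi(\max\{s,x\})}{W}, \qquad W = \phi'\psi - \phi\psi' \ (\text{constant}) = \phi(1),
\]
the symmetry $G(s,x)=G(x,s)$ is immediate. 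For positivity I would use the Volterra equation $\phi(s) = (1+s) + \int_{-1}^s (s-\tau)q(\tau)\phi(\tau)\,d\tau$, whose right side is $\ge 1+s$; bootstrapping gives $\phi(s)\ge 1+s>0$ on $(-1,1]$ and symmetrically $\psi(s)=\phi(-s)\ge 1-s>0$ on $[-1,1)$, whence $W=\phi(1)>0$ and $G>0$ in the interior, vanishing only on the boundary.

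For the upper bound \eqref{greenupper} I would compare $G$ with the explicit Green function of the flat problem $q\equiv 0$, namely $G_0(s,x) = \tfrac12(1+\min\{s,x\})(1-\max\{s,x\})$. The difference $D=G-G_0$ is continuous, is $C^1$ across the diagonal $s=x$ because $G$ and $G_0$ carry the identical jump $-1$ in $\partial_s$ there, and satisfies $D_{ss}=qG\ge 0$ weakly with $D(\pm1)=0$. A convex function on $[-1,1]$ vanishing at both endpoints is nonpositive, so $G\le G_0$ pointwise, and this survives symmetrization: $\overline G\le\overline{G_0}$. A direct computation gives $\overline{G_0}(s,x)=\tfrac12\min\{1-\lvert s\rvert,1-\lvert x\rvert\}\le\min\{1-\lvert s\rvert,1-\lvert x\rvert\}$, which is \eqref{greenupper}.

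The lower bound \eqref{greenlower} is the real work. The elementary estimates $\phi(\min)\ge 1+\min\{s,x\}$ and $\psi(\max)\ge 1-\max\{s,x\}$, together with the case check $(1+\min\{s,x\})(1-\max\{s,x\})\ge(1-\lvert s\rvert)(1-\lvert x\rvert)$, give $G(s,x)\ge(1-\lvert s\rvert)(1-\lvert x\rvert)/W$, so everything reduces to an upper bound on the Wronskian $W$. Here I would exploit the evenness of $q$ to pass to the even sector on $[0,1]$, where the governing solution is the Neumann solution $w$ ($w''=qw$, $w(0)=1$, $w'(0)=0$). A Riccati estimate for $u=(\log w)'$, namely $u'=q-u^2\le q$ with $u(0)=0$, yields
\[
\log w(\tau) = \int_0^\tau u \le \int_0^\tau\!\!\int_0^r q\,d\sigma\,dr \le \int_0^1 (1-r)\,q(r)\,dr = \varrho,
\]
so $w\le e^{\varrho}$; feeding this into the representation $w(1)=1+\int_0^1(1-\tau)q(\tau)w(\tau)\,d\tau$ gives a Wronskian bound of the form $W\lesssim (1+\varrho)e^{\varrho}$. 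Combined with $\min\{1-\lvert s\rvert,1-\lvert x\rvert\}\ge(1-\lvert s\rvert)(1-\lvert x\rvert)$, this produces \eqref{greenlower}.

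The main obstacle is precisely this Wronskian estimate. The weight $(1-s)$ in the definition of $\varrho$ is what makes the Riccati/Gronwall bound close, since the contribution of $q$ to the Wronskian is weighted by distance to the Dirichlet endpoint; and the evenness of $q$ is exactly what allows the estimate to be confined to the half-interval near the free end, discarding the region near the odd point $s=0$. Everything else---symmetry, positivity, and the comparison with the flat Green function---is routine once $G$ is written in product form.
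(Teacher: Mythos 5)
First, note that this paper does not prove Theorem \ref{greenthm} at all --- it is quoted from the companion paper \cite{whipsandchains} --- so your argument has to stand on its own. Most of it does. The product representation $G(s,x)=\phi(\min\{s,x\})\psi(\max\{s,x\})/W$ with $W=\phi(1)$, the symmetry, the positivity via the Volterra inequality $\phi(s)\ge 1+s$, and the comparison $G\le G_0$ with the flat Green function (the convexity argument for $D=G-G_0$ is correct, since the unit jumps in $\partial_s$ across the diagonal cancel) are all fine, and your identity $\overline{G_0}(s,x)=\tfrac12\min\{1-\lvert s\rvert,1-\lvert x\rvert\}$ gives \eqref{greenupper} with a factor of $2$ to spare. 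The reduction of \eqref{greenlower} to a single upper bound on the Wronskian $W=\phi(1)$ is also correct.

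The gap is the Wronskian estimate itself, and it is not cosmetic. You bound the Neumann solution $w$ on $[0,1]$ (the Riccati computation is fine as far as it goes), but $W=\phi(1)$ is the terminal value of the solution launched from the Dirichlet endpoint $s=-1$, and you never relate $\phi(1)$ to $w(1)$. Your justification for working only on $[0,1]$ --- that evenness lets you discard the region near ``the odd point $s=0$'' in favor of ``the half-interval near the free end'' --- has the geometry backwards: the free ends (where $\sigma=0$, i.e.\ the Dirichlet conditions) are at $s=\pm1$, the fixed point of the whip is $s=0$, and $\phi(1)=2+\int_{-1}^{1}(1-\tau)q(\tau)\phi(\tau)\,d\tau$ genuinely sees $q$ on all of $[-1,1]$, with weight $1-\tau\approx 2$ near $\tau=-1$ where $1-\lvert\tau\rvert$ vanishes. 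What actually closes the estimate is the self-improving bound $\phi(\tau)\lesssim 1+\tau$ near the launch point: setting $h(s)=\phi(s)/(1+s)$, the Volterra kernel becomes $K(s,\tau)=(s-\tau)(1+\tau)/(1+s)$, which satisfies $K\le\min\{1+\tau,\,1-\tau\}=1-\lvert\tau\rvert$, so Gronwall gives $h\le\exp\bigl(\int_{-1}^1(1-\lvert\tau\rvert)q\bigr)=e^{2\varrho}$ and hence $W\le 2e^{2\varrho}$; this weighted formulation is also what makes the IVP well-posed, since $q=\lvert\gamma''\rvert^2$ need not be integrable up to $s=\pm1$ in these weighted spaces. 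Finally, be aware that \eqref{greenlower} cannot hold with the constant exactly as printed: for $q\equiv 0$ one has $W=2$ and $G(0,0)=\tfrac12$, while the right-hand side of \eqref{greenlower} equals $1$. So the correct target is of the form $\tfrac12(1-\lvert s\rvert)(1-\lvert x\rvert)e^{-c\varrho}/(1+c\varrho)$ (the normalization in \cite{whipsandchains} evidently differs), and no argument will reach the stated inequality verbatim; this does not affect how the bound is used elsewhere in the paper, where only the functional form in $\varrho$ matters.
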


\begin{proposition}\label{orthoprojprop}
Suppose $m\ge 2$. The orthogonal projection $P_{\gamma}\colon T_{\gamma}\Curve^m\to T_{\gamma}\Arc^m$ generated by the metric \eqref{kinetic} is given by 
\begin{align}
P_{\gamma}(z) &= z - (\sigma \gamma')' \quad \text{where $\sigma$ solves}\quad \label{orthoproj}\\
\sigma''(s) - \lvert \gamma''(s)\rvert^2 \sigma(s) &= \langle z'(s), \gamma'(s)\rangle \quad \text{with $\sigma(-1)=\sigma(1)=0$.}\label{sigmaeq}
\end{align}

For any fixed $\gamma\in \Arc^{m+2}$, the projection is continuous from $T_{\gamma}\Curve^m$ to $T_{\gamma}\Arc^m$, but the map is not continuous from $T\Curve^m$ to $T\Arc^m$.
\end{proposition}

\begin{proof}
We first observe that for any function $\sigma$ with $\sigma(1)=\sigma(-1)=0$, the vector field $v = \frac{d}{ds}(\sigma \gamma')$ is orthogonal to $T_{\gamma}\Arc^m$ in the metric \eqref{kinetic}, since if $\langle w', \gamma'\rangle \equiv 0$ then 
\begin{align*}
\llangle v, w\rrangle_{\gamma} &= \int_{-1}^1 \big\langle w(s), \tfrac{d}{ds}\big( \sigma(s)\gamma'(s)\big) \big\rangle\, ds \\
&= \big( \sigma(s) \langle w(s), \gamma'(s)\rangle\big)\big|_{s=-1}^{s=1} - \int_{-1}^1 \sigma(s) \langle w'(s), \gamma'(s)\rangle \, ds \\
&= 0.
\end{align*}

Now given $z\in T_{\gamma}\Curve^m$, solve the ordinary differential equation \eqref{sigmaeq} for $\sigma$, and define $w=z-(\sigma \gamma')'$. Then we have 
\begin{align*} 
\langle w'(s), \gamma'(s)\rangle &= \langle z'(s), \gamma'(s)\rangle - \langle \sigma''(s) \gamma'(s) + 2\sigma'(s)\gamma''(s) + \sigma(s)\gamma'''(s), \gamma'(s)\rangle \\
&= \langle z'(s), \gamma'(s)\rangle - \sigma''(s) + \sigma(s)\lvert \gamma''(s)\rvert^2 = 0,
\end{align*}
using the fact that $\lvert \gamma'(s)\rvert^2\equiv 1$ implies that $\langle \gamma'(s), \gamma''(s)\rangle\equiv 0$ and hence $\langle \gamma'(s), \gamma'''(s)\rangle = -\lvert \gamma''(s)\rvert^2$. So $w$ actually satisfies the tangent condition $\langle w', \gamma'\rangle\equiv 0$.

We just need to check that $w$ actually is in $T_{\gamma}\Arc^m$, i.e., that $\lVert w\rVert^2_{j, j}<\infty$ for $1\le j\le m$ as long as $z\in T_{\gamma}\Curve^m$ for $m\ge 4$. It is obviously sufficient (and easier) to check that $v\in T_{\gamma}\Curve^m$. Our estimates are based on the same estimates that are done in \cite{whipsandchains}, to which we refer for more details.

The key is that, by the product rule, $v=(\sigma \gamma')'$ satisfies
\begin{equation}\label{vestimate}
\begin{split}
\lVert v\rVert^2_{m,m} &\lesssim \sum_{k=0}^{m+1} \int_{-1}^1 (1-s^2)^m \lvert D^{m+1-k}\sigma\rvert^2 \lvert D^{k+1}\gamma\rvert^2 \, ds \\ 
&\lesssim A^2 \big( \lVert \gamma\rVert^2_{m+2, m+2} + \lVert \gamma\rVert^2_{m, m+1}\big) + \sum_{\ell=0}^{m-1} \supnorm{\gamma}^2_{m-\ell-1, m-\ell} \lVert \sigma\rVert^2_{\ell+1, \ell+2},
\end{split}
\end{equation}
where $A = \sup_s \lvert \sigma'\rvert \ge \sup_s \lvert \sigma\rvert/(1-s^2).$ Based on the bound \eqref{greenupper} for the Green function for \eqref{sigmaeq}, we easily see that 
$$ A^2 \lesssim \lVert z\rVert^2_{0,1} \lesssim \lVert z\rVert^2_{1,1} + \lVert z\rVert^2_{2,2},$$ using \eqref{weightedpoincare}. We easily get 
$$\supnorm{\gamma}^2_{m-\ell-1, m-\ell} \lesssim \lVert \gamma\rVert^2_{m-\ell, m-\ell} + \lVert \gamma\rVert^2_{m-\ell+1, m-\ell+1} + \lVert \gamma\rVert^2_{m-\ell+2, m-\ell+2}$$
using \eqref{weightedpoincare}--\eqref{weightedsobolev}, which allows us to bound $\lVert v\rVert^2_{m,m}$ in terms of the $\Arc^{m+2}$ norm of $\gamma$, once we get a bound on $\lVert \sigma\rVert^2_{\ell+1, \ell+2}$ for $0\le \ell\le m-1$. 

To obtain this, we use \eqref{sigmaeq} along with the product rule and Lemma \ref{weightedestimateslemma} to get the recursive inequality
$$ \lVert \sigma\rVert^2_{\ell+1, \ell+2} \lesssim \big( \textstyle \sum_{i=1}^{\ell+2} \lVert \gamma\rVert^2_{i,i} \big)^2 \big(\textstyle \sum_{j=2}^{\ell} \lVert \sigma\rVert^2_{j-1, j}+ \sum_{j=0}^{\ell+1} \lVert z\rVert^2_{j, j} \big).$$ Combining this with \eqref{vestimate} gives 
$$ \sum_{\ell=0}^m \lVert v\rVert^2_{m, m} \lesssim G_m\left( \sum_{i=1}^{m+2} \lVert \gamma\rVert^2_{i, i}\right) \left( \sum_{\ell=0}^m \lVert z\rVert^2_{\ell, \ell}\right)$$
for $m\ge 2$, for some function $G_m$. 

On the other hand, if $\gamma\in \Arc^m$ and not $\Arc^{m+1}$, we obviously do not in general have $(\sigma \gamma')' \in T_{\gamma}\Curve^m$ even if $\sigma$ is $C^{\infty}$. Hence even if $z$ is $C^{\infty}$, the projection $P_{\gamma}(z)$ given by \eqref{orthoproj} is not in $T_{\gamma}\Arc^m$ if $\gamma$ is only in $\Arc^m$.
\end{proof}

\subsection{The second fundamental form}

The orthogonal projection encodes all the geometry of the submanifold $\Arc^m$, via the second fundamental form. The second fundamental form then leads to both the geodesic equation and to the sectional curvature (which we will discuss in Section \ref{curvaturesection}).

\begin{lemma}\label{secondfundy}
The second fundamental form of $\Arc^m$, as a submanifold of $\Curve^m$, is given by the operator $S\colon T_{\gamma}\Arc^m\times T_{\gamma}\Arc^m \to (T_{\gamma}\Arc^m)^{\perp}$ defined by 
\begin{equation}\label{secondfundamental}
S(u,v) = \frac{d}{ds}\big(\sigma_{uv}(s)\gamma'(s)\big), \quad \text{where}
\end{equation}
\begin{equation}\label{sigmauv}
\sigma_{uv}(s) = \int_{-1}^1 G(s,x) \langle u'(x), v'(x)\rangle \, dx
\end{equation}
in terms of the Green function of Theorem \ref{greenthm}.
This operator is only well-defined if $\gamma\in \Arc^{m+2}$. 
\end{lemma}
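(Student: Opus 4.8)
The plan is to apply the Gauss formula. Because the $L^2$ metric \eqref{kinetic} on $\Curve^m$ is independent of the base point, the space $\Curve^m$ is flat and its Levi-Civita connection is simply the ordinary directional derivative $D_u v$ in the ambient Hilbert space; one checks metric-compatibility and torsion-freeness directly, and uniqueness of the connection does the rest, despite the metric being only weak. The second fundamental form of the submanifold $\Arc^m$ is then the normal component $S(u,v) = (I - P_\gamma)(D_u v)$, where $P_\gamma$ is the orthogonal projection of Proposition \ref{orthoprojprop}. I would recall at the outset that this $S$ is symmetric and tensorial by the usual argument (the bracket $[u,v]$ is tangent to $\Arc^m$, so its normal part vanishes); both facts will in any case be visible in the final formula.

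The only information about $D_u v$ that the projection formula \eqref{orthoproj}--\eqref{sigmaeq} needs is the quantity $\langle (D_u v)', \gamma'\rangle$, and this is what I would compute next. Extend $u$ and $v$ to tangent vector fields: pick a curve $t\mapsto \gamma(t)$ in $\Arc^m$ with $\gamma(0)=\gamma$ and $\partial_t\gamma(0)=u$, and a field $v(t)\in T_{\gamma(t)}\Arc^m$ with $v(0)=v$. By \eqref{arctangentspace} the tangency condition is $\langle \partial_s v(t), \partial_s\gamma(t)\rangle \equiv 0$ for all $t$. Differentiating this identity in $t$ at $t=0$, using $\partial_t v|_{t=0} = D_u v$ and $\partial_t \partial_s\gamma|_{t=0} = u'$, yields $\langle (D_u v)', \gamma'\rangle = -\langle u', v'\rangle$.

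Feeding this into Proposition \ref{orthoprojprop} finishes the identification. The normal component is $(I - P_\gamma)(D_u v) = (\sigma\gamma')'$, where $\sigma$ solves $\sigma'' - \lvert\gamma''\rvert^2\sigma = \langle (D_u v)', \gamma'\rangle = -\langle u', v'\rangle$. By the Green function of Theorem \ref{greenthm}, with the sign convention fixed by \eqref{greenfunction}--\eqref{ODEsolgreen}, the solution is exactly $\sigma = \sigma_{uv}$ of \eqref{sigmauv}, so that $S(u,v) = (\sigma_{uv}\gamma')'$, which is \eqref{secondfundamental}. Since this expression involves only $u'$ and $v'$ evaluated along $\gamma$, it manifestly does not depend on the chosen extensions and is symmetric in $u$ and $v$, confirming the tensorial claims.

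The one genuine point requiring care --- and the main obstacle --- is well-definedness, i.e. checking that the normal field $(\sigma_{uv}\gamma')'$ actually lies in $T_\gamma\Curve^m$. This is precisely the regularity bookkeeping already carried out in the proof of Proposition \ref{orthoprojprop}: the operator $z\mapsto (\sigma\gamma')'$ costs two derivatives of $\gamma$, its top-order term being $\sigma_{uv}\gamma^{(m+2)}$, whose weighted $L^2$ norm is controlled (using the bound $\lvert\sigma_{uv}\rvert\lesssim (1-s^2)$ established as in Proposition \ref{orthoprojprop}) by $\lVert\gamma\rVert^2_{m+2,m+2}$. Thus the estimate \eqref{vestimate} gives $S(u,v)\in T_\gamma\Curve^m$ exactly when $\gamma\in\Arc^{m+2}$, while for $\gamma$ merely in $\Arc^{m+1}$ this top-order term need not be square-integrable against the weight. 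This is the source of the stated hypothesis.
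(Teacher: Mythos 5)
Your proposal is correct and follows essentially the same route as the paper: identify $S(u,v)$ as the normal component of the flat ambient derivative, differentiate the tangency constraint $\langle V_s,\eta_s\rangle\equiv 0$ in $t$ to get $\langle (D_uV)',\gamma'\rangle=-\langle u',v'\rangle$, and feed this into the projection formula and Green function of Proposition \ref{orthoprojprop} and Theorem \ref{greenthm}. Your added remarks on symmetry, tensoriality, and the $\gamma\in\Arc^{m+2}$ regularity requirement are consistent with what the paper states or defers to the proof of Proposition \ref{orthoprojprop}.
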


\begin{proof}
In general the second fundamental form is the orthogonal projection of the connection: if $U$ and $V$ are vector fields on $\Arc^m$, with $u=U_{\gamma}$ and $v=V_{\gamma}$ the values of these fields at $\gamma\in\Arc^m$, then the second fundamental form is $S(u,v) = \big((\nabla_UV)_{\gamma}\big)^{\perp}$, and the value obtained depends only on the values $U_{\gamma}$ and $V_{\gamma}$ (not on the extensions $U$ and $V$). (See for example do Carmo~\cite{docarmo2}.) Unfortunately it is somewhat awkward to work with general vector fields on a function space, and so we use the alternative method of vector fields along curves. 

So suppose $\eta(t)$ is a curve in $\Arc^m$ with $\eta(0)=\gamma$, and let $V(t)$ be a curve along $\eta$, so that $V(t)\in T_{\eta(t)}\Arc^m$ for each $t$. Set $u=\dot{\eta}(0)$ and $v=V(0)$. Then the covariant derivative of $V$ in the direction $u$, calculated in the flat ambient manifold $\Curve^m$, is $$(\nabla_uV)_{\gamma} = \frac{DV}{dt}(0) = \frac{dV}{dt}(0)$$
where the last equality comes from using flatness of the Hilbert manifold $\Curve^m$ to change the covariant derivative to an ordinary derivative. Now using formula \eqref{orthoproj} for the orthogonal projection, we get 
$$ \sigma_{uv}''(s)-\lvert \gamma''(s)\rvert^2 = \langle V_{ts}, \eta_s\rangle_{t=0}.$$
To simplify $\langle V_{ts}, \eta_s\rangle$, we use the fact that $V(t)\in T_{\eta(t)}\Arc^m$ at each time to obtain $\langle V_s, \eta_s\rangle \equiv 0$ for all time, so that differentiating we get $\langle V_{st}, \eta_s\rangle + \langle V_s, \eta_{st}\rangle = 0$. Now at time $0$ we have $V_s=v'$ and $\eta_{st} = u'$, so that $\langle V_{ts}, \eta_s\rangle_{t=0} = -\langle u', v'\rangle$, which implies 
$$\sigma_{uv}''(s) - \lvert \gamma''(s)\rvert^2 \sigma_{uv}(s) = -\langle u'(s), v'(s)\rangle.$$
Hence the formula \eqref{ODEsolgreen} yields \eqref{sigmauv}.
\end{proof}

As a consequence we obtain the geodesic equation on $\Arc^m$, using the general formula for a geodesic on a submanifold:
$$ \frac{D^2\eta}{dt^2} = S\left( \frac{d\eta}{dt}, \frac{d\eta}{dt}\right),$$ 
which using \eqref{secondfundamental} reduces to \eqref{stringevolution} with tension given by \eqref{stringconstraint}.

The fact that the orthogonal projection fails to be continuous in both $\gamma$ and $z$ implies that, unlike in Ebin-Marsden~\cite{em}, the weak geodesic equation on $\Arc^m$ cannot be viewed as an ordinary differential equation, and hence cannot be solved via Picard iteration.

On the other hand, we can prove local existence and uniqueness of solutions. The main result of the author's companion paper~\cite{whipsandchains} is the following theorem (restated here more geometrically):
\begin{theorem}\label{maincompanion}
Suppose $m\ge 3$. If $\gamma\in \Arc^{m+1}$ and $w\in T_{\gamma}\Arc^m$, then there is a $T>0$ such that there is a unique solution $\eta\colon (-T,T)\to \Arc^{m+1}$ of \eqref{stringevolution}--\eqref{stringconstraint} satisfying $\eta(0)=\gamma$, $\dot{\eta}(0)=w$, and such that $\eta(t)\in \Arc^{m+1}$ and $\dot{\eta}(t)\in T_{\eta(t)}\Arc^m$ for all $t$. 
\end{theorem}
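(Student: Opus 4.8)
The plan is to follow the method of lines sketched in the introduction, since the derivative loss on the right-hand side of \eqref{stringevolution} rules out Picard iteration. First I would eliminate the tension: by Theorem \ref{greenthm} the constraint \eqref{stringconstraint} with $\sigma(\pm 1)=0$ is solved by $\sigma(s) = \int_{-1}^1 G(s,x)\,\lvert \eta_{tx}\rvert^2\,dx$, where $G$ is the Green function for $\partial_s^2 - \lvert \eta_{ss}\rvert^2$. Substituting this into \eqref{stringevolution} turns the system into a single nonlocal quasilinear second-order (in $t$) equation $\eta_{tt} = \partial_s\big(\sigma[\eta,\eta_t]\,\eta_s\big)$, in which the bounds \eqref{greenupper}--\eqref{greenlower} give quantitative control of $\sigma$ and its $s$-derivatives in terms of $\eta$ and $\eta_t$. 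I would work throughout with the weighted energy $E_m$ of \eqref{weightedenergy}, whose weights $(1-s^2)^j$ are matched to the degeneration of $\sigma$ at $s=\pm 1$.

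Next I would discretize in $s$: replacing $\partial_s$ by finite differences on a grid produces a finite-dimensional system of ODEs for the grid values of $\eta$, where at each instant the discrete tension is obtained by solving a tridiagonal linear system. That system is uniquely solvable because the discrete operator inherits negative definiteness from $\lvert \eta_{ss}\rvert^2 \ge 0$ (a discrete analogue of the positivity $G\ge 0$ in Theorem \ref{greenthm}), so standard ODE theory yields the discretized solutions. The heart of the argument is then a uniform, mesh-independent estimate showing that the discrete analogue of $E_m$ satisfies a differential inequality $\frac{d}{dt}E_m \lesssim C(E_m)$, giving a common time of existence $T$ depending only on $E_m(0)$, that is, on the $\Arc^{m+1}$ norm of $\gamma$ and the $T_{\gamma}\Arc^m$ norm of $w$. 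With uniform $E_m$ bounds in hand, the weighted Sobolev embedding of Lemma \ref{weightedestimateslemma} supplies compactness, and a subsequence converges to a solution $\eta$ with $\eta(t)\in\Arc^{m+1}$ and $\dot\eta(t)\in T_{\eta(t)}\Arc^m$ by lower semicontinuity; time-reversibility of the autonomous second-order equation then extends the solution to $(-T,T)$.

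Two further points complete the argument. To see that the constraint $\lvert \eta_s\rvert\equiv 1$ is maintained, so that $\eta(t)$ genuinely stays in $\Arc^{m+1}$, set $f = \lvert \eta_s\rvert^2 - 1$; a direct computation using \eqref{stringevolution}--\eqref{stringconstraint} shows that $f$ solves the linear homogeneous equation $f_{tt} = \sigma f_{ss} + 2\sigma_s f_s + \big(2\lvert\eta_{ss}\rvert^2\sigma - 2\lvert\eta_{st}\rvert^2\big) f$, which together with the compatibility conditions $f(0)=0$ and $f_t(0) = 2\langle w',\gamma'\rangle = 0$ forces $f\equiv 0$ by uniqueness for the linear equation; the tangency $\dot\eta(t)\in T_{\eta(t)}\Arc^m$ then follows since $\langle \eta_{st},\eta_s\rangle = \tfrac12\partial_t\lvert\eta_s\rvert^2 = 0$. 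For uniqueness I would estimate a lower-order energy of the difference of two solutions with identical data: because $\eta\mapsto\sigma$ is Lipschitz in the relevant norms via the Green-function representation \eqref{ODEsolgreen}, the difference satisfies a Gr\"onwall inequality and hence vanishes.

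The step I expect to be the main obstacle is the uniform energy estimate. Since $\partial_s(\sigma\eta_s)$ is unbounded in every Sobolev topology, the estimate cannot treat it as a bounded perturbation; instead one must exploit the precise structure of the equation, namely the positivity and boundary-degeneration of $\sigma$ and the weights $(1-s^2)^j$, so that the top-order terms, which naively lose a derivative, cancel or bound one another in the weighted energy identity after integration by parts with no boundary contributions. Ensuring that these cancellations survive the spatial discretization, so that the resulting bounds are genuinely independent of the mesh, is the delicate technical core, and is precisely the content carried out in~\cite{whipsandchains}.
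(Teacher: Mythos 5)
Your outline matches the paper's approach exactly: the paper does not reprove this theorem but cites its companion paper \cite{whipsandchains}, which (as described in the introduction) establishes local existence and uniqueness via the method of lines together with uniform bounds on the weighted energy $E_m$ of \eqref{weightedenergy}, precisely the strategy you sketch. Your supporting computations (the Green-function representation of $\sigma$, the propagation of the constraint via the linear equation for $f=\lvert\eta_s\rvert^2-1$, and the Gr\"onwall uniqueness argument) are consistent with that scheme, and you correctly identify the mesh-independent weighted energy estimate as the technical core carried out in \cite{whipsandchains}.
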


The loss of derivatives here (i.e., the fact that $\dot{\eta}$ is not as smooth as $\eta$) means that we do not have a classical exponential map (which would be a map from an open subset of $T\Arc^m$ to itself). 

However, if we \emph{fix} an initial configuration $\gamma\in \Arc^{m+1}$, then we have a reduced exponential map 
\begin{equation}\label{reducedexponential} 
\exp_{\gamma}\colon \Omega\subset T_{\gamma}\Arc^m \to \Arc^m
\end{equation}
defined on some open neighborhood $\Omega$ of $0$ by $\exp_{\gamma}(w) = \eta(1)$, where $\eta$ solves \eqref{stringevolution}--\eqref{stringconstraint} with $\eta(0)=\gamma$ and $\dot{\eta}(0)=w$. Actually as mentioned $\exp_{\gamma}(w)$ is really in $\Arc^{m+1}$, but the theorem in the next section on continuous dependence can only be proved in this weaker topology.

\section{Differentiability of the reduced exponential map}\label{notc1section}

We now want to establish continuity of the reduced exponential map \eqref{reducedexponential}; in other words, for a fixed, sufficiently smooth initial whip configuration $\gamma$, we show that the solution depends continuously on the initial velocity $w$ in any weighted Sobolev topology \eqref{weightednorm}. In fact we will obtain Lipschitz continuity as a result of showing that $\exp_{\gamma}$ is differentiable everywhere on $T_{\gamma}\Arc^m$, with bounded derivative, but that it is not $C^1$. The latter fact is a consequence of clustering of conjugate points near $0$, the same thing that happens for the exponential maps corresponding to Burgers' equation~\cite{ck} and the Korteweg-deVries equation~\cite{ckkt}.

First we compute the derivative of the exponential map (which is just the linearization of the equations \eqref{stringevolution}--\eqref{stringconstraint}) and prove that it is bounded.

\begin{theorem}\label{differentiable}
Suppose $m\ge 3$ and $\gamma\in \Arc^{m+1}$, and let $w\in T_{\gamma}\Arc^m$. 

The derivative of $\exp_{\gamma}\colon T_{\gamma}\Arc^m\to \Arc^m$ is 
$D(\exp_{\gamma})_w(y) = \xi(1)$ where $(\xi, \psi)$ is a solution of the linearized equations 
\begin{align}
\xi_{tt} &= \partial_s(\sigma \xi_s) + \partial_s(\phi \eta_s), \label{linearizedevolution} \\ 
\phi_{ss} - \lvert \eta_{ss}\rvert^2 \phi &= 2\langle \eta_{ss}, \xi_{ss}\rangle \sigma - 2\langle \eta_{st}, \xi_{st}\rangle \label{linearizedconstraint}
\end{align}
where $\langle \xi_s, \eta_s\rangle \equiv 0$, with initial conditions $\xi(0,s) = 0$ and $\xi_t(0,s) = y(s)$. Here $\eta$ and $\sigma$ solve \eqref{stringevolution}--\eqref{stringconstraint} with initial conditions $\eta(0,s) = \gamma(s)$ and $\eta_t(0,s) = w(s)$. 

The derivative satisfies the bound 
\begin{equation}\label{derivbound}
\sum_{k=0}^m \lVert D(\exp_{\gamma})_w(y)\rVert^2_{k, k} \lesssim \textstyle H_m\left(\sum_{i=2}^{m+1} \lVert \gamma\rVert^2_{i, i}, \sum_{j=1}^m \lVert w\rVert^2_{j, j}\right) \sum_{k=0}^m \lVert y\rVert^2_{k, k}
\end{equation}
for some function $H_m$.
\end{theorem}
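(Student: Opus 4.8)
The plan is to obtain the linearized system by differentiating \eqref{stringevolution}--\eqref{stringconstraint} in the initial velocity, and then to prove \eqref{derivbound} through a weighted energy estimate of exactly the kind carried out for the nonlinear equation in \cite{whipsandchains}. Concretely I would take the family of solutions $\eta^{\varepsilon}$ with $\eta^{\varepsilon}(0)=\gamma$ and $\eta^{\varepsilon}_t(0)=w+\varepsilon y$, with tensions $\sigma^{\varepsilon}$, and set $\xi=\partial_{\varepsilon}\eta^{\varepsilon}|_{\varepsilon=0}$ and $\phi=\partial_{\varepsilon}\sigma^{\varepsilon}|_{\varepsilon=0}$. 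Differentiating \eqref{stringevolution} gives \eqref{linearizedevolution} at once, while differentiating \eqref{stringconstraint} yields the two coupling terms on the right of \eqref{linearizedconstraint}; differentiating the pointwise identity $|\eta_s|^2\equiv 1$ gives the constraint $\langle \xi_s,\eta_s\rangle\equiv 0$, and the initial conditions $\xi(0)=0$, $\xi_t(0)=y$ are immediate. Existence and uniqueness of $(\xi,\phi)$ for this linear (time-dependent) system then follow from the a priori bound below, combined with the method-of-lines approximation of \cite{whipsandchains}.

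For the bound I would introduce the linearized weighted energy
\[
\tilde E_m(t)=\sum_{j=0}^m \int_{-1}^1 (1-s^2)^j |\partial_s^j \xi_t|^2 + (1-s^2)^{j+1}|\partial_s^{j+1}\xi|^2\, ds,
\]
which at $t=0$ satisfies $\tilde E_m(0)=\sum_{j=0}^m \lVert y\rVert^2_{j,j}$ because $\xi(0)=0$. The target is a differential inequality $\frac{d}{dt}\tilde E_m\lesssim C\,\tilde E_m$ with $C$ depending only on the data appearing inside $H_m$, after which Gr\"onwall's inequality bounds $\sup_{0\le t\le 1}\tilde E_m(t)$ by $\tilde E_m(0)$ times a constant of the required form. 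Since $\xi(0)=0$ we may write $\xi(1)=\int_0^1 \xi_t\,dt$, so by Minkowski's integral inequality $\lVert \xi(1)\rVert_{k,k}\le \sup_{0\le t\le 1}\lVert \xi_t(t)\rVert_{k,k}$ for each $k\le m$; because the kinetic terms of $\tilde E_m$ control $\sum_{k=0}^m \lVert \xi_t\rVert^2_{k,k}$, this produces \eqref{derivbound}. That the constant depends on $\gamma$ only up to order $m+1$ and on $w$ only up to order $m$ is because the coefficients $\sigma,\eta_s,\eta_{ss},\eta_{st}$ entering the estimate are controlled by precisely those norms of the data, via Theorem \ref{maincompanion} and the Green-function representation \eqref{ODEsolgreen}.

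The main obstacle is the energy estimate itself, and in particular the two top-order couplings between $\xi$ and $\phi$. Differentiating the kinetic term $\int(1-s^2)^j|\partial_s^j\xi_t|^2$ and substituting \eqref{linearizedevolution} produces the leading contribution $\int(1-s^2)^j\langle \partial_s^j\xi_t,\sigma\,\partial_s^{j+2}\xi\rangle$, which must be integrated by parts and matched against $\frac{d}{dt}\int(1-s^2)^{j+1}|\partial_s^{j+1}\xi|^2$; the vanishing of both the weight $(1-s^2)$ and the tension $\sigma$ at the endpoints is what kills the boundary terms and prevents a loss of derivatives. This cancellation is identical to the one driving the nonlinear estimate in \cite{whipsandchains}, so I would reuse those computations with $\sigma$ treated as a known coefficient. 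The genuinely new forcing is $\partial_s(\phi\eta_s)$: here I would solve \eqref{linearizedconstraint} with the Green function of Theorem \ref{greenthm} for the curve $\eta(t,\cdot)$, writing
\[
\phi(s)=\int_{-1}^1 G(s,x)\big(2\langle \eta_{st},\xi_{st}\rangle-2\langle \eta_{ss},\xi_{ss}\rangle\sigma\big)\,dx,
\]
and use the bounds \eqref{greenupper}--\eqref{greenlower} to estimate $\phi$ and its derivatives by $\xi_{ss}$ and $\xi_{st}$, quantities already inside $\tilde E_m$, with the two-order smoothing of the Green function guaranteeing no net derivative loss. Assembling these contributions yields the differential inequality.

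Finally, to confirm that this $\xi(1)$ is genuinely the Fr\'echet derivative rather than merely a formal linearization, I would apply the same energy estimate to the second-order remainder $\zeta=\eta^{y}-\eta-\xi$, where $\eta^{y}$ solves \eqref{stringevolution}--\eqref{stringconstraint} with data $w+y$. This $\zeta$ has vanishing initial data and obeys a linearized equation of the same structure but with forcing quadratic in the first-order differences, so continuous dependence together with the estimate gives $\sum_{k=0}^m \lVert \zeta(1)\rVert^2_{k,k}=o\big(\sum_{k=0}^m \lVert y\rVert^2_{k,k}\big)$, which is precisely differentiability with derivative $\xi(1)$.
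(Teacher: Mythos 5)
Your derivation of the linearized system, the identification of $D(\exp_\gamma)_0$ via the variation parameter, the Green-function representation of $\phi$, and the Gr\"onwall/integrate-the-velocity scheme all match the paper's proof in outline. But there is a genuine gap in the heart of the argument: the energy functional you propose, with fixed weights $(1-s^2)^j$, does not close at top order. When you differentiate the kinetic term $\int (1-s^2)^{j}\lvert\partial_s^{j}\xi_t\rvert^2$, substitute \eqref{linearizedevolution}, and integrate by parts, the leading contribution is $-2\int (1-s^2)^{j}\sigma\,\langle\partial_s^{j+1}\xi_t,\partial_s^{j+1}\xi\rangle\,ds$, which you must cancel against $\frac{d}{dt}\int(1-s^2)^{j+1}\lvert\partial_s^{j+1}\xi\rvert^2\,ds = 2\int(1-s^2)^{j+1}\langle\partial_s^{j+1}\xi_t,\partial_s^{j+1}\xi\rangle\,ds$. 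These cancel only if the weight equals the tension; since $(1-s^2)-\sigma$ vanishes merely to first order at $s=\pm 1$, the residual $\int(1-s^2)^{j}\big[(1-s^2)-\sigma\big]\langle\partial_s^{j+1}\xi_t,\partial_s^{j+1}\xi\rangle\,ds$ at the top level $j=m$ involves $m+1$ derivatives of $\xi_t$ with only one extra power of the weight, and no inequality of the type \eqref{weightedpoincare} lets you absorb an extra derivative at the cost of extra weight. So the differential inequality $\frac{d}{dt}\tilde E_m\lesssim C\tilde E_m$ you assert cannot be obtained this way; the cancellation in \cite{whipsandchains} that you plan to ``reuse'' is driven precisely by a different choice of weight.

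The missing device is the tension-weighted energy
$$\tilde{\varepsilon}_{m-1}=\sum_{\ell=0}^{m-1}\int_{-1}^1\Big(\sigma^{\ell}\lvert\partial_s^{\ell}\xi_t\rvert^2+\sigma^{\ell+1}\lvert\partial_s^{\ell+1}\xi\rvert^2\Big)\,ds,$$
which the paper introduces for exactly this reason: with weights that are powers of $\sigma$ itself, the two top-order cross terms combine into the exact derivative $\int_{-1}^1\partial_s\big(\sigma^{\ell+1}\langle\partial_s^{\ell+1}\xi,\partial_s^{\ell}\xi_t\rangle\big)\,ds$, which vanishes because $\sigma(\pm1)=0$. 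The price is new terms from $\partial_t(\sigma^{\ell})$, controlled by $\sup_s\lvert\sigma_{st}\rvert\ge\sup_s\lvert\sigma_t\rvert/(1-s^2)$, and one then uses the two-sided bound $B(t)^{-1}\le\sigma(t,s)/(1-s^2)\le A(t)$ from Theorem \ref{greenthm} to pass between $\tilde{\varepsilon}_{m-1}$ and the fixed-weight norms appearing in \eqref{derivbound}. Your treatment of the $\partial_s(\phi\eta_s)$ forcing via the Green function, and your closing remainder argument for genuine Fr\'echet differentiability (which the paper leaves implicit), are both sound; the proof is repaired by replacing your $\tilde E_m$ with $\tilde{\varepsilon}_{m-1}$ and adding the equivalence-of-norms step.
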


\begin{proof}
We obtain \eqref{linearizedevolution}--\eqref{linearizedconstraint} by considering a family of solutions $\big( \eta(r, t, s), \sigma(r, t, s)\big)$ depending on a parameter $r$, satisfying $\eta(r, 0, s) = \gamma(s)$, $\eta_t(0, 0, s) = w(s)$, and $\eta_{tr}(0, 0, s)=y(s)$. Setting $\xi = \eta_r\big|_{r=0}$ and $\phi=\sigma_r\big|_{r=0}$, we get the desired equations and initial conditions. 

The bound \eqref{derivbound} will be obtained by bounding the tension-weighted energy norms
\begin{equation}\label{epsilontilde}
\tilde{\varepsilon}_{m-1} = \sum_{\ell=0}^{m-1} \int_{-1}^1 \Big( \sigma(t,s)^{\ell} \lvert \partial_s^{\ell}\partial_t \xi(t,s)\rvert^2 + \sigma(t,s)^{\ell+1} \lvert \partial_s^{\ell+1}\xi(t,s)\rvert^2 \Big) \, ds.
\end{equation}
As in the estimates of \cite{whipsandchains}, we compare the tension-weighted norm to the time-independent weighted energy norm
\begin{equation}\label{epsilon}
\varepsilon_{m-1} = \sum_{\ell=0}^{m-1} \int_{-1}^1 \Big( s^{\ell} \lvert \partial_s^{\ell}\partial_t \xi(t,s)\rvert^2 + s^{\ell+1} \lvert \partial_s^{\ell+1} \lvert \partial_s^{\ell+1}\xi(t,s)\rvert^2 \Big) \, ds.
\end{equation}
The bounds from Theorem \ref{greenthm} imply, as in \cite{whipsandchains}, that we have bounds $A(t)$ and $B(t)$ such that 
$$ 0< \frac{1}{B(t)} \le \frac{\sigma(t,s)}{1-s^2} \le A(t) < \infty$$
implies that the norms \eqref{epsilontilde} and \eqref{epsilon} are equivalent. We recall also the result from \cite{whipsandchains} that $\sup_s \lvert \sigma_t(t,s)\rvert/(1-s^2) \le \sup_s \lvert \sigma_{st}\rvert = C(t)$ is bounded in terms of the $\Arc^4$ norm of $\gamma$ and the $\Arc^3$ norm of $w$. Finally we write 
$$ E_m(t) = \sum_{\ell=0}^m \int_{-1}^1 \Big( (1-s^2)^{\ell} \lvert \partial_s^{\ell}\eta_t\rvert^2 + (1-s^2)^{\ell+1} \lvert \partial_s^{\ell+1} \eta\rvert^2 \Big) \, ds.$$ By our assumption on the initial conditions and Theorem \ref{maincompanion}, $E_m(t)$ is bounded.

Now we compute the time derivative $d\tilde{\varepsilon}_{m-1}/dt$ one term at a time, obtaining after some simplifications
\begin{align*}
&\frac{d}{dt} \int_{-1}^1 \sigma^{\ell} \lvert \partial_s^{\ell}\partial_t \xi\rvert^2 + \sigma^{\ell+1} \lvert \partial_s^{\ell+1}\xi \rvert^2 \, ds \lesssim A^{\ell-1} C \lVert \xi_t\rVert^2_{\ell, \ell} + 
A^{\ell}C \lVert \xi\rVert^2_{\ell+1,\ell+1} \\
&\qquad\qquad + A^{\ell} \lVert \xi_t\rVert_{\ell, \ell} \sum_{j=2}^{\ell+1} \sqrt{\int_{-1}^1 (1-s^2)^{\ell} \lvert \partial_s^j\sigma\rvert^2 \lvert \partial_s^{\ell+2-j}\xi\rvert^2 \, ds} \\
&\qquad\qquad + A^{\ell} \lVert \xi_t\rVert_{\ell, \ell} \sum_{j=2}^{\ell+1} \sqrt{\int_{-1}^1 (1-s^2)^{\ell} \lvert \partial_s^j\phi\rvert^2 \lvert \partial_s^{\ell+2-j}\eta\rvert^2 \, ds}   \\
&\qquad\qquad  + A^{\ell} \Delta \lVert \xi_t\rVert_{\ell, \ell} \big(\lVert \eta\rVert_{\ell+2, \ell+2} + \lVert \eta\rVert_{\ell, \ell+1}\big) \\
&\qquad\qquad + \int_{-1}^1 \partial_s\big( \sigma^{\ell+1} \langle \partial_s^{\ell+1}\xi, \partial_s^{\ell}\xi_t\rangle\big) \, ds,
\end{align*}
where $\Delta=\sup_s \lvert \phi_s\rvert$. 

The terms on the first line are obviously bounded by $\varepsilon_{m-1}$, and the terms on the last line vanish because of the boundary condition on $\sigma$. The fourth line is bounded in terms of $E_m$ and $\varepsilon_{m-1}$, using a bound on $\Delta$ obtained by using the Green function bound \eqref{greenupper}, together with the weighted Sobolev bound \eqref{weightedpoincare}, to get 
$$ \Delta^2 \lesssim A^2 \int_{-1}^1 (1-s^2) \lvert \xi_{ss}\rvert^2 \lvert \eta_{ss}\rvert^2 \, ds + \int_{-1}^1 \lvert \xi_{st}\rvert^2 \lvert \eta_{st}\rvert^2 \,ds \lesssim (A^2+1) \varepsilon_2 E_2.$$ The terms on the second line can be bounded using the fact from \cite{whipsandchains} that $\lVert\sigma\rVert^2_{j+1,j+1}$ can be bounded in terms of $E_j$ for $j\ge 3$. Finally the terms on the third line can be bounded by obtaining a bound on $\lVert \phi\rVert^2_{j-1, j}$ in terms of $\varepsilon_{j-1}$, which is obtained in the same way as the proof of Proposition \ref{orthoprojprop}. 

Summing from $\ell=0$ to $m-1$, we obtain an estimate of the form
$$ \frac{d\tilde{\varepsilon}_{m-1}}{dt} \le J(E_m) \varepsilon_{m-1}$$
for some function $J$, and then the bound $1-s^2 \le B\sigma$ gives us a Gronwall inequality of the form $d\tilde{\varepsilon}_{m-1}/dt \le \tilde{J}(E_m) \tilde{\varepsilon}_{m-1}$. We then obtain the desired bound \eqref{derivbound} from this.
\end{proof}

Integrating the derivative obviously gives us a bound on $\lVert \exp_{\gamma}(w)-\exp_{\gamma}(v)\rVert_{m,m}$, as in Cheeger-Ebin~\cite{CE}, which establishes that the reduced exponential map \eqref{reducedexponential} is locally Lipschitz, as desired.

However, the exponential map cannot be $C^1$; if it were, then the fact that its differential is invertible at zero would imply it is also invertible near zero. But the latter does not happen.

\begin{theorem}\label{notc1}
The reduced exponential map \eqref{reducedexponential} is not a $C^1$ map on $T_{\gamma}\Arc^m$ for any $m$. 
\end{theorem}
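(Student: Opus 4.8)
The plan is to argue by contradiction via the inverse function theorem, exactly as sketched before the statement. Recall from Theorem~\ref{differentiable} that $D(\exp_{\gamma})_w(y)=\xi(1)$, where $\xi$ solves the linearized system \eqref{linearizedevolution}--\eqref{linearizedconstraint} with $\xi(0)=0$ and $\xi_t(0)=y$. Taking $w=0$, the base geodesic is the constant $\eta\equiv\gamma$ with vanishing tension, so $\phi\equiv0$ and $\xi_{tt}=0$; hence $D(\exp_{\gamma})_0(y)=y$, i.e. $D(\exp_{\gamma})_0$ is the identity. If $\exp_{\gamma}$ were $C^1$, then since the identity is a toplinear isomorphism of $T_{\gamma}\Arc^m$, the inverse function theorem would produce an open neighborhood $\Omega_0\ni0$ on which $\exp_{\gamma}$ is a diffeomorphism; in particular $D(\exp_{\gamma})_w$ would be invertible for every $w\in\Omega_0$. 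It therefore suffices to exhibit $w_n\to0$ for which $D(\exp_{\gamma})_{w_n}$ is \emph{not} injective, i.e. conjugate points clustering at $0$.

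For the explicit example I take $\gamma$ to be the straight rod $\gamma(s)=(s,0)$, which lies in $\Arc^{m+1}$ for every $m$. Writing $\mathbb{R}^2\cong\mathbb{C}$, the rigid rotation $\eta_{\omega}(t,s)=s\,e^{i\omega t}$ is an exact solution of \eqref{stringevolution}--\eqref{stringconstraint}: since $\eta_{ss}\equiv0$, the constraint \eqref{stringconstraint} gives $\sigma(s)=\tfrac{\omega^2}{2}(1-s^2)$, and \eqref{stringevolution} is verified directly. Its initial velocity is $w_{\omega}=\eta_{\omega,t}(0)=(0,\omega s)$, with $\lVert w_{\omega}\rVert^2_{0,0}=\tfrac{2}{3}\omega^2$. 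The heart of the computation is the linearization of \eqref{linearizedevolution}--\eqref{linearizedconstraint} about $\eta_{\omega}$. Substituting $\xi=i\,u\,e^{i\omega t}$, the oddness of $\xi$ together with the tangency constraint $\langle\xi_s,\eta_s\rangle\equiv0$ forces $u$ to be real (the perturbation is purely transverse to the rod in the rotating frame), and separating real and imaginary parts collapses the system to the single autonomous equation
\[
u_{tt} = \omega^2\Big(\tfrac12\,\partial_s\big((1-s^2)u_s\big) + u\Big),
\]
the remaining equation merely determining the tension variation $\phi$ from $u$.

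The operator on the right is a shift of the Legendre operator, so expanding the odd function $u$ in odd Legendre polynomials $u(t,s)=\sum_{n\ \mathrm{odd}}c_n(t)P_n(s)$ diagonalizes it: using $\partial_s((1-s^2)P_n')=-n(n+1)P_n$ we obtain $c_n''=\omega^2\mu_n c_n$ with $\mu_n=1-\tfrac12 n(n+1)$. For every odd $n\ge3$ we have $\mu_n<0$, so the solution with $c_n(0)=0$ is $c_n(t)=\sin\!\big(\omega\sqrt{|\mu_n|}\,t\big)$, which vanishes again precisely when $\omega\sqrt{|\mu_n|}\,t=\pi$. Choosing $\omega=\omega_n:=\pi/\sqrt{|\mu_n|}$, the field $\xi_n(t,s)=i\sin(\pi t)P_n(s)e^{i\omega_n t}$ is a nonzero Jacobi field with $\xi_n(0)=0$, $\xi_{n,t}(0)=(0,\pi P_n)\ne0$, and $\xi_n(1)=0$. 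Hence $D(\exp_{\gamma})_{w_{\omega_n}}$ annihilates $(0,\pi P_n)$ and fails to be injective. Since $|\mu_n|=\tfrac12 n(n+1)-1\to\infty$, we have $\omega_n\to0$ and thus $w_{\omega_n}\to0$, so these non-injective points enter any neighborhood $\Omega_0$ of $0$, contradicting the conclusion of the inverse function theorem. Because the $\xi_n$ and $(0,\pi P_n)$ are polynomials, they lie in $T_{\gamma}\Arc^m$ for every $m$, so the failure of $C^1$-ness holds in every weighted Sobolev topology.

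I expect the main obstacle to be the linearization step: correctly handling the constraint $\langle\xi_s,\eta_s\rangle\equiv0$ and the coupled tension perturbation $\phi$ so as to see that the transverse displacement decouples into the autonomous Legendre-type equation above. Once that reduction is in hand, the conjugate-point structure, the accumulation $\omega_n\to0$, and the contradiction are all immediate; the only remaining bookkeeping is to confirm that $D(\exp_{\gamma})_0=\mathrm{id}$ and that the exhibited Jacobi fields are genuine elements of $T_{\gamma}\Arc^m$, both of which are clear since everything in sight is smooth.
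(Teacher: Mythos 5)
Your proposal is correct and follows essentially the same route as the paper: the identity differential at $w=0$ plus the inverse function theorem, the rigid rotating rod as the explicit geodesic, and diagonalization by odd Legendre polynomials to exhibit conjugate points with $\omega_n\to 0$. The only cosmetic difference is that you expand the transverse component of $\xi$ itself in $P_n$ (using $\partial_s((1-s^2)P_n')=-n(n+1)P_n$), whereas the paper expands the transverse component of $\xi_s$ in $P_{2n-1}'$; the resulting frequencies agree.
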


\begin{proof}
For any $\gamma$, the differential $(D\exp_{\gamma})_w$ at $w=0$ is the identity, which is easy to see from the fact that in this case, $\eta(t)=\gamma$ and $\sigma(t)=0$ for all $t$ in \eqref{stringevolution}--\eqref{stringconstraint}. This implies that the solution $\phi$ of \eqref{linearizedconstraint} is zero, so that \eqref{linearizedevolution} reduces to $\xi_{tt}=0$. 
Since $\xi(0)=0$ and $\dot{\xi}(0)=y$, we get $\xi(1)=y$, i.e., $(D\exp_{\gamma})_0(y)=y$. If the exponential map were $C^1$, then $(D\exp_{\gamma})_w$ would have to also be invertible for sufficiently small $w$.

However, we can find $w$ arbitrarily close to $0$ in $T_{\gamma}\Arc^m$ for which $(D\exp_{\gamma})_w$ is not an isomorphism. To do this, we work out an explicit solution in detail. It is easy to verify that 
\begin{equation}\label{simplesoln}
\eta(t,s) = (s\cos{\omega t}, s\sin{\omega t}) \quad \text{and} \quad \sigma(t, s) = \tfrac{\omega^2}{2} (1-s^2)
\end{equation}
form a solution of \eqref{stringevolution}--\eqref{stringconstraint} for any angular velocity $\omega$. In this case we have of course \begin{equation}\label{gammawsimple}
\gamma(s) = (s, 0) \quad \text{and}\quad w(s) = (0, \omega s).
\end{equation} 

The constraint $\langle \xi_s, \eta_s\rangle \equiv 0$ implies that $\xi_s(t,s) = \chi(t,s) (-\sin{\omega t}, \cos{\omega t})$ for some function $\chi$. Differentiating \eqref{linearizedevolution} with respect to $s$ gives $\phi_{ss}\equiv 0$ and 
\begin{equation}\label{xieq}
\chi_{tt} - \omega^2 \chi = \tfrac{\omega^2}{2} \partial_s^2((1-s^2) \chi).
\end{equation}
Since $\xi$ is odd as a function of $s$, we must have $\chi$ even as a function of $s$. Expanding $\chi$ in a basis of derivatives of odd Legendre polynomials as 
$$ \chi(t, s) = \sum_{n=1}^{\infty} \chi_n(t) P_{2n-1}'(s)$$
and using the fact that 
$$ \frac{d^2}{ds^2} \Big( (1-s^2) P_{2n-1}'(s)\Big) = -2n(2n-1) P_{2n-1}'(s),$$
we see that 
$$ \chi_n''(t) - \omega^2 \chi_n(t) = -\omega^2 n(2n-1) \chi_n(t),$$
the solution of which, with $\chi_n(0)=0$ and $\chi_n'(0)=c_n$, is 
$$ \chi_n(t) = \frac{c_n}{\alpha_n} \sin{(\alpha_n t)},$$
where $\alpha_n^2 = \omega^2 (2n+1)(n-1)$. 

So if $y_n(s)=(0, P_{2n-1}'(s))$ for any $n$, then we have $(D\exp_{\gamma})_w(y_n) = \frac{\sin{\alpha_n}}{\alpha_n} y_n$. Now for each $n\ge 2$ we can define $\omega_n = \sqrt{\pi/((2n+1)(n-1))}$, so that $\alpha_n=\pi$. Obviously $\omega_n\to 0$ and so the corresponding $w_n$ in \eqref{gammawsimple} converge to $0$ in $T_{\gamma}\Arc^m$ for any $m$, yet $(D\exp_{\gamma})_{w_n}$ has a nontrivial kernel for any $n\ge 2$. Hence even for a fixed smooth $\gamma$, the differential $w\mapsto (D\exp_{\gamma})_w$ cannot be continuous as a map from $T_{\gamma}\Arc^m$ to $L(T_{\gamma}\Arc^m, T\Arc^m)$. 
\end{proof}

Generally speaking, if $\eta\colon [0,T]\to M$ is a geodesic in a finite-dimensional manifold $M$ such that 
$\eta(b)$ is conjugate to $\eta(0)$ for some $b\le T$, then $\eta$ cannot be locally minimizing on $[0,T]$. (See for example do Carmo~\cite{docarmo2}.) For the geodesic \eqref{simplesoln}, our computation shows that no matter how small $T$ is, there is a conjugate point at some $b<T$, so that even an arbitrarily short geodesic cannot be minimizing.

On the other hand, the induced distance is \emph{not} degenerate: if $\gamma_1$ and $\gamma_2$ are distinct curves in $\Arc$, then the infimum of lengths of paths in $\Arc$ joining them has a positive lower bound. This shows that the vanishing geodesic distance is not a consequence of having unbounded curvature, which was suggested in \cite{MM3}.

\begin{proposition}\label{nondegeneratedistance}
The Riemannian distance between distinct curves $\gamma_1$ and $\gamma_2$ in $\Arc$, in the metric 
\eqref{curvesmetric}, is always positive.
\end{proposition}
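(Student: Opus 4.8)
The plan is to exploit the fact, already emphasized in the introduction, that $\Arc$ carries the metric \eqref{curvesmetric} as a \emph{Riemannian submanifold} of the flat Hilbert space $\Curve^m$, and hence of the ambient space $L^2([-1,1],\RN)$. The entire content is that the submanifold length of a path equals its length measured in the ambient flat space, and that in a flat Hilbert space the length of any path is bounded below by the straight-line distance between its endpoints. Consequently the induced distance on $\Arc$ dominates the ambient $L^2$ distance, which is manifestly positive for distinct curves.

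First I would fix any piecewise-$C^1$ path $\eta\colon[0,1]\to\Arc$ with $\eta(0)=\gamma_1$ and $\eta(1)=\gamma_2$. Because the metric \eqref{curvesmetric} is nothing but the restriction of the ambient $L^2$ inner product to tangent vectors $\dot\eta(t)\in T_{\eta(t)}\Arc$, the length of $\eta$ measured in $\Arc$ is
\begin{equation*}
L(\eta)=\int_0^1 \sqrt{\llangle \dot\eta(t),\dot\eta(t)\rrangle_{\eta(t)}}\,dt=\int_0^1 \Big(\int_{-1}^1 |\dot\eta(t,s)|^2\,ds\Big)^{1/2}dt=\int_0^1 \|\dot\eta(t)\|_{L^2}\,dt,
\end{equation*}
which is exactly the $L^2$ arc length of the same path viewed in the ambient Hilbert space. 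Next I would write $\gamma_2-\gamma_1=\int_0^1 \dot\eta(t)\,dt$ (valid pointwise in $s$ by the fundamental theorem of calculus) and apply Minkowski's integral inequality to obtain
\begin{equation*}
\|\gamma_2-\gamma_1\|_{L^2}=\Big\|\int_0^1 \dot\eta(t)\,dt\Big\|_{L^2}\le \int_0^1 \|\dot\eta(t)\|_{L^2}\,dt=L(\eta).
\end{equation*}
Since this holds for every admissible path, taking the infimum gives $d_{\Arc}(\gamma_1,\gamma_2)\ge \|\gamma_2-\gamma_1\|_{L^2}$. Finally, $\gamma_1$ and $\gamma_2$ are distinct continuous curves, so they differ on a set of positive measure, whence $\|\gamma_2-\gamma_1\|_{L^2}>0$ and the claim follows.

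I do not expect a genuine obstacle here; the argument is purely the observation that a submanifold of a flat space inherits a distance bounded below by the ambient distance. The only points deserving a word of care are (i) confirming that the weak $L^2$ metric still produces a well-defined path-length functional, which it does since the $L^2$ norm is an honest norm even though it fails to generate the weighted Sobolev topology of the manifold, and (ii) noting that if $\gamma_1$ and $\gamma_2$ happen to lie in different connected components of $\Arc$, the infimum is vacuously $+\infty$, which is still positive. This argument also makes transparent the contrast drawn later with the Michor--Mumford metric, whose degeneracy is possible precisely because it admits no such flat ambient space bounding it from below.
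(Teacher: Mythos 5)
Your proof is correct and is essentially the paper's argument: both rest on the observation that the length of a path in the submanifold $\Arc$, measured in the weak $L^2$ metric, dominates the ambient ``chord'' distance between its endpoints in the flat space. The only cosmetic difference is that you obtain the $L^2$ chord distance directly via Minkowski's integral inequality, whereas the paper detours through Cauchy--Schwarz to bound the length below by $\tfrac{1}{\sqrt{2}}\int_{-1}^1 \lvert \gamma_2(s)-\gamma_1(s)\rvert\,ds$; both lower bounds are positive for distinct continuous curves.
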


\begin{proof}
The idea is basically that the Riemannian distance in a submanifold of a flat space is always at least as large as the ``chord'' distance in the flat space. Specifically, if $\eta(t, s)$ is a curve with $\lvert \eta_s\rvert\equiv 1$ and $\eta(0,s)=\gamma_1(s)$ and $\eta(1,s)=\gamma_2(s)$, then the length of $\eta$ is bounded using the Cauchy-Schwarz inequality by
\begin{align*}
L(\eta) &= \int_0^1 \sqrt{\int_{-1}^1 \lvert \eta_t(t,s)\rvert^2 \, ds}\,dt 
\ge \int_0^1 \int_{-1}^1 \tfrac{1}{\sqrt{2}} \lvert \eta_t(t, s)\rvert \, ds \, dt \\
&\ge \frac{1}{\sqrt{2}} \int_{-1}^1 \left\lvert \int_0^1 \eta_t(t,s)\, dt\right\rvert \, ds
= \frac{1}{\sqrt{2}} \int_{-1}^1 \lvert \gamma_2(s)-\gamma_1(s)\rvert \, ds.
\end{align*}
So we get an absolute positive lower bound regardless of $\eta$.
\end{proof}

\section{Curvature of the arc space}\label{curvaturesection}

Having computed the second fundamental form of $\Arc^m$ in $\Curve^m$ in
Lemma \ref{secondfundy}, we can now compute the sectional curvature using
the Gauss-Codazzi formula.

\begin{theorem}\label{curvaturethm}
Let $m\ge 2$ and $\gamma\in\Arc^{m+2}$, and let $u, v\in T_{\gamma}\Arc^m$ be vector fields.
Then the sectional curvature $K(u,v)$ at $\gamma$ in the section spanned by $u$ and $v$ is
\begin{equation}\label{sectional}
K = \frac{\int_{-1}^1 \int_{-1}^1 G(s,x) \big( \lvert u'(s)\rvert^2 \lvert v'(x)\rvert^2 - \langle u'(x), v'(x)\rangle \langle u'(s), v'(s)\rangle 
\, ds\,dx}{\int_{-1}^1 \int_{-1}^1 \lvert u(s)\rvert^2 \lvert v(x)\rvert^2 - \langle u(x), v(x)\rangle \langle u(s), v(s)\rangle 
\, ds\, dx},
\end{equation}
where $G$ is the Green function \eqref{greenfunction}. 
The curvature satisfies $K\ge e^{-\varrho}/(1+\varrho)$, where $\varrho = \int_0^1 (1-s) \lvert \gamma''(s)\rvert^2 \, ds$; in particular it is always positive but never bounded above.
\end{theorem}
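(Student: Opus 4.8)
The plan is to invoke the Gauss equation, which for a submanifold of a flat ambient space expresses the sectional curvature purely through the second fundamental form. Since $\Curve^m$ is a flat Hilbert space in the metric \eqref{kinetic}, for $u,v\in T_\gamma\Arc^m$ we have
$$K(u,v) = \frac{\llangle S(u,u),S(v,v)\rrangle_\gamma - \llangle S(u,v),S(u,v)\rrangle_\gamma}{\llangle u,u\rrangle_\gamma\llangle v,v\rrangle_\gamma - \llangle u,v\rrangle_\gamma^2},$$
with $S$ the second fundamental form of Lemma \ref{secondfundy}. The first task is therefore to evaluate $\llangle S(u,v),S(p,q)\rrangle_\gamma$ for arbitrary pairs using \eqref{secondfundamental}--\eqref{sigmauv}. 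Writing $a=\sigma_{uv}$, $b=\sigma_{pq}$, the product rule together with $\lvert\gamma'\rvert\equiv 1$ and $\langle\gamma',\gamma''\rangle\equiv 0$ gives $\langle (a\gamma')',(b\gamma')'\rangle = a'b' + ab\lvert\gamma''\rvert^2$. Integrating by parts (the boundary terms vanish since $\sigma_{uv},\sigma_{pq}$ vanish at $\pm 1$) and substituting the defining ODE $\sigma_{uv}''-\lvert\gamma''\rvert^2\sigma_{uv}=-\langle u',v'\rangle$ collapses the expression to $\int_{-1}^1\sigma_{pq}(s)\langle u'(s),v'(s)\rangle\,ds$, and then \eqref{sigmauv} turns it into the clean symmetric kernel form $\int\!\int G(s,x)\langle u'(s),v'(s)\rangle\langle p'(x),q'(x)\rangle\,dx\,ds$. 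Specializing to $(u,u),(v,v)$ and to $(u,v),(u,v)$ yields the numerator of \eqref{sectional}, and the denominator follows from $\llangle u,u\rrangle\llangle v,v\rrangle=\int\!\int\lvert u(s)\rvert^2\lvert v(x)\rvert^2\,ds\,dx$ and the analogous identity for $\llangle u,v\rrangle^2$.

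For positivity I would use the symmetry of $G$ to symmetrize the numerator integrand in $s\leftrightarrow x$, producing $\lvert u'(s)\rvert^2\lvert v'(x)\rvert^2+\lvert u'(x)\rvert^2\lvert v'(s)\rvert^2-2\langle u'(s),v'(s)\rangle\langle u'(x),v'(x)\rangle$, which is pointwise nonnegative: the first two terms are $\ge 2\lvert u'(s)\rvert\lvert u'(x)\rvert\lvert v'(s)\rvert\lvert v'(x)\rvert$ by AM--GM, while the third is bounded above by the same quantity via Cauchy--Schwarz. With $G\ge 0$ this already yields $K\ge 0$. To obtain the quantitative bound I would instead insert the Green-function lower bound \eqref{greenlower}, namely $G(s,x)\ge (1-\lvert s\rvert)(1-\lvert x\rvert)e^{-\varrho}/(1+\varrho)$. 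Because this weight factorizes, the resulting integral equals exactly $e^{-\varrho}/(1+\varrho)$ times the Gram determinant of $\{u,v\}$ in the weighted inner product $\langle\!\langle p,q\rangle\!\rangle_1=\int_{-1}^1(1-\lvert s\rvert)\langle p'(s),q'(s)\rangle\,ds$.

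The crux is then comparing this weighted Gram determinant to the $L^2$ Gram determinant in the denominator. The key lemma is the weighted Poincaré (Hardy) inequality $\int_{-1}^1(1-\lvert s\rvert)\lvert p'\rvert^2\,ds\ge\int_{-1}^1\lvert p\rvert^2\,ds$ for odd $p$, which I would prove by writing $p(s)=\int_0^s p'(\tau)\,d\tau$, applying Cauchy--Schwarz with weight $(1-\tau)^{-1}$ to get $\lvert p(s)\rvert^2\le\ln\frac{1}{1-\lvert s\rvert}\int_0^1(1-\tau)\lvert p'\rvert^2\,d\tau$, and using the sharp identity $\int_0^1\ln\frac{1}{1-s}\,ds=1$. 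This shows $\langle\!\langle\cdot,\cdot\rangle\!\rangle_1\ge\llangle\cdot,\cdot\rrangle$ as quadratic forms on the plane spanned by $u,v$; since positive-definite matrices with $B_0\le B_1$ satisfy $\det B_0\le\det B_1$, the weighted Gram determinant dominates the $L^2$ one, giving $K\ge e^{-\varrho}/(1+\varrho)$ and in particular $K>0$. I expect this lower-bound chain to be the main obstacle, precisely because the constant must emerge as exactly $e^{-\varrho}/(1+\varrho)$: this forces the Poincaré inequality to be used with constant $1$ (which, happily, the logarithmic-weight computation supplies cleanly) and the Green estimate \eqref{greenlower} to be plugged in with no slack.

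Finally, for unboundedness I would fix the rigid rod $\gamma(s)=(s,0)$, so $\gamma''\equiv 0$ and $G$ is the standard positive Dirichlet Green function of $-d^2/ds^2$ with $\int\!\int G>0$. Taking $u=(0,s)$ and $v=(0,\sin k\pi s)$, both odd and both tangent since their derivatives are orthogonal to $\gamma'=(1,0)$, the symmetrized numerator integrand equals $k^2\pi^2(\cos k\pi s-\cos k\pi x)^2$, so the numerator behaves like $\tfrac12 k^2\pi^2\int\!\int G$ as $k\to\infty$, whereas the denominator tends to the positive constant $\tfrac23$. Hence $K(u,v)\to\infty$, showing the curvature is never bounded above. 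The mechanism is structural: the numerator depends on the derivatives $u',v'$ while the denominator depends on $u,v$, so high-frequency test fields make the ratio blow up.
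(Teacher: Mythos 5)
Your argument is correct, and for the curvature formula and the nonnegativity it coincides with the paper's: both apply the Gauss--Codazzi equation in the flat ambient space $\Curve^m$, reduce $\llangle S(u,u),S(v,v)\rrangle$ to $\int_{-1}^1 \sigma_{uu}(s)\lvert v'(s)\rvert^2\,ds$ by integrating by parts against the defining ODE for $\sigma_{vv}$, and symmetrize the numerator using $G(s,x)=G(x,s)$ so that Cauchy--Schwarz gives the pointwise inequality $M(s,x)\ge 0$. Where you genuinely diverge is in the quantitative lower bound. The paper weakens \eqref{greenlower} one step further, to $G(s,x)\ge \tfrac{e^{-\varrho}}{4(1+\varrho)}(1-s^2)(1-x^2)$, and then expands $u,v$ in odd Legendre polynomials, using $\tfrac{d}{ds}\big[(1-s^2)P_{2n-1}'\big]=-\lambda_nP_{2n-1}$ with $\lambda_1=2$ to bound the ratio of Gram determinants below by $\lambda_1^2=4$, which cancels the $4$ and recovers $e^{-\varrho}/(1+\varrho)$. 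You instead keep the factorized weight $(1-\lvert s\rvert)(1-\lvert x\rvert)$ and prove the sharp Hardy-type inequality $\int_{-1}^1(1-\lvert s\rvert)\lvert p'\rvert^2\,ds\ge\int_{-1}^1\lvert p\rvert^2\,ds$ for odd $p$ directly (your computation $\int_0^1\ln\tfrac{1}{1-s}\,ds=1$ is correct and gives exactly constant $1$), then conclude via monotonicity of determinants of ordered positive-definite Gram matrices on the plane spanned by $u$ and $v$. The two routes yield the same constant; yours avoids the eigenfunction expansion and makes explicit the determinant-comparison step that the paper leaves implicit in its Legendre computation (where one must still symmetrize the coefficients in $n\leftrightarrow m$ to see that the termwise ratio bound is legitimate). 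Finally, for unboundedness the paper only remarks that it is easy to see from the formula; your explicit family $u=(0,s)$, $v=(0,\sin k\pi s)$ along the rod $\gamma(s)=(s,0)$, with symmetrized numerator $\tfrac12 k^2\pi^2\iint G(s,x)(\cos k\pi s-\cos k\pi x)^2\,ds\,dx\to\infty$ and denominator tending to $2/3$, supplies the missing computation and is a worthwhile addition.
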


\begin{proof}
First we note that $\Curve^m$ is flat, so the Gauss-Codazzi formula gives 
$$ K = \frac{\llangle S(u,u), S(v,v)\rrangle - \llangle S(u,v), S(u,v)\rrangle}{\llangle u,u\rrangle \llangle v,v\rrangle - \llangle u,v\rrangle^2},$$
where $S$ is the second fundamental form \eqref{secondfundamental}. Now we have 
\begin{align*}
\llangle S(u,u), S(v,v)\rrangle &= \int_{-1}^1 \langle \sigma_{uu}'\gamma' + \sigma_{uu} \gamma'', 
\sigma_{vv}'\gamma' + \sigma_{vv}\gamma''\rangle \, ds \\
&= \int_{-1}^1 \sigma_{uu}' \sigma_{vv}' + \lvert \gamma''\rvert^2 \sigma_{uu}\sigma_{vv} \, ds 
= \int_{-1}^1 \sigma_{uu}(s) \lvert v'(s)\rvert^2 \, ds.
\end{align*}
Now by \eqref{sigmauv} we have 
$$ \sigma_{uu}(s) = \int_{-1}^1 G(s,x) \lvert u'(x)\rvert^2 \, dx,$$
where $G$ is the Green function \eqref{greenfunction}. Hence we can write 
$$ \llangle S(u,u), S(v,v)\rrangle = \int_{-1}^1 \int_{-1}^1 G(s,x) \lvert u'(x)\rvert^2 \lvert v'(s)\rvert^2 \, ds \, dx.$$
Formula \eqref{sectional} follows.

Nonnegativity of the sectional curvature follows from the fact that, by symmetry of the Green function,
we can write the numerator of \eqref{sectional} as 
\begin{multline*}
\frac{1}{2} \int_{-1}^1 \int_{-1}^1 G(s,x) M(s,x)\,ds\,dx,
\quad \text{where}\\
M(s,x) = \lvert u'(s)\rvert^2 \lvert v'(x)\rvert^2 + \lvert u'(x)\rvert^2 \lvert v'(s)\rvert^2 
- 2\langle u'(s), v'(s)\rangle \langle u'(x), v'(x)\rangle.
\end{multline*}
We have $G(s,x)\ge 0$ for all $s$ and $x$ by Theorem \ref{greenthm} and  $M(s,x)\ge 0$ for all $s$ and $x$ by the Cauchy-Schwarz inequality.

To get a sharper estimate, note that since $\gamma\in \Arc^4$ by assumption, we know $\lVert \gamma\rVert_{1,2}$ is bounded by \eqref{weightedpoincare}. Hence we can use the estimate \eqref{greenlower} to obtain 
$$ G(s,x) \ge \frac{e^{-\varrho}}{1+\varrho} (1-\lvert s\rvert)(1-\lvert x\rvert) \ge \frac{e^{-\varrho}}{4(1+\varrho)} (1-s^2) (1-x^2)$$
on $[-1,1]\times[-1,1]$, which allows us to write \eqref{sectional} as 
\begin{equation}\label{sectionallower}
K\ge \frac{e^{-\varrho}}{4(1+\varrho)} \, \frac{ \lVert u\rVert^2_{1,1} \lVert v\rVert^2_{1,1} - \llangle u, v\rrangle_{1,1}^2}{\lVert u\rVert^2_{0,0} \lVert v\rVert^2_{0,0} - \llangle u, v\rrangle^2_{0,0}}.
\end{equation} 
To get a lower bound on this, we disregard the restrictions on $u$ and $v$ (that they be elements of $T_{\gamma}\Arc^m$) and minimize over all odd vector fields along $\gamma$. Expand $u$ and $v$ in a basis of odd Legendre polynomials as $u(s) = \sum_{n=1}^{\infty} u_n P_{2n-1}(s)$ and $v(s) = \sum_{n=1}^{\infty} v_n P_{2n-1}(s)$, where $u_n$ and $v_n$ are vectors in $\RN$. Then the bound \eqref{sectionallower} becomes
$$ K \ge \frac{e^{-\varrho}}{4(1+\varrho)} \, \frac{ \sum_{n,m=1}^{\infty} \lambda_n \lambda_m \big( \lvert u_n\rvert^2 \lvert v_m\rvert^2 - \langle u_n, v_n\rangle \langle u_m, v_m\rangle\big)}{ \sum_{n,m=1}^{\infty} \big( \lvert u_n\rvert^2 \lvert v_m\rvert^2 - \langle u_n, v_n\rangle \langle u_m, v_m\rangle\big)} \ge  \frac{\lambda_1^2 e^{-\varrho}}{4(1+\varrho)},$$
where $\lambda_n = 2n(2n-1)$. Positivity of the curvature follows. It is easy to see that the curvature can be made arbitrarily large using this formula as well. 
\end{proof}

The fact that the curvature is unbounded above is responsible for the fact (as shown in Theorem \ref{notc1}) that conjugate points along a geodesic occur at times arbitrarily close to $0$, and hence for the failure of the Riemannian exponential map to be $C^1$: If the curvature were bounded above, then the Rauch comparison theorem would imply that there is a small interval of any geodesic in which no conjugate points can occur, contradicting Theorem \ref{notc1}.

\section{Comparison with other metrics}\label{michormumfordsection}

The space of curves is of interest in shape-recognition applications~\cite{MM}, since the first step in distinguishing two shapes in the plane is to distinguish their boundary curves. Obviously in studying 
geometry of curves for this purpose, we want to consider the image of the curve in the plane (which is all the viewer can see), not the actual map from the interval to the plane. There are essentially two ways to get a Riemannian structure on this set: impose a specific parametrization with unit speed (as we have done in this article so far), or consider all parametrized curves and mod out by the reparametrizations (the diffeomorphism group of the interval). The latter approach is the one taken by Michor and Mumford~\cite{MM}. However the approaches are basically equivalent:\footnote{Here we modify the Michor-Mumford space to consider curves with a free boundary and a fixed length; in Appendix \ref{circleappendix} we consider the periodic case, and in Appendix \ref{nolengthappendix} we discuss the case where length is not constrained.} an odd curve of length $2$ in the plane always has exactly one parametrization on $[-1,1]$ of unit speed, so that if \begin{equation}\label{Imm2}
\Imm_2([-1,1], \mathbb{R}^2) = \left\{ \eta\colon [-1,1]\to\mathbb{R}^2 : \lvert \eta'(s)\rvert \ne 0,\; \eta(-s)=-\eta(s),\; L(\eta)=2\right\}
\end{equation} denotes the space of odd immersions into $\mathbb{R}^2$ for which the image has length $2$ and $\Diff([-1,1])$ is the group of odd orientation-preserving diffeomorphisms of $[-1,1]$ to itself, then we expect to have $\Imm_2([-1,1])/\Diff([-1,1]) \cong \Arc$. This doesn't quite work rigorously since the action is not always free, but we can still see what the Michor-Mumford metric looks like on $\Arc$. In this section we will assume all objects are $C^{\infty}$ and work formally, although with a bit more work we could extend the results to weighted Sobolev spaces.

The reparametrization action of $\Diff([-1,1])$ on $\Imm_2([-1,1])$ is given by composition: for $h\in \Diff([-1,1])$ the map is $R_h(\eta)=\eta\circ h$. We can define a Riemannian metric on $\Imm_2$ by 
\begin{equation}\label{rightinvariantmetric}
\llangle u, v\rrangle_{\eta} = \int_{-1}^1 \langle u(s), v(s)\rangle \lvert \eta'(s)\rvert \, ds.
\end{equation}
We clearly have $\llangle u\circ h, v\circ h\rrangle_{\eta\circ h} = \llangle u, v\rrangle_{\eta}$ for any $h\in \Diff([-1,1])$ by the change of variables formula, so that the metric \eqref{rightinvariantmetric} is invariant under the group action. This is in contrast to the metric \eqref{curvesmetric}, which is \emph{not} invariant under reparametrizations. (Of course on the submanifold of unit-speed curves $\Arc$, both metrics coincide.) The geodesic equation in the metric \eqref{curvesmetric} is just $\eta_{tt}=0$, while the equation in the invariant metric \eqref{rightinvariantmetric} is the much more complicated\footnote{This is a typical sort of tradeoff for invariance: the same thing happens in fluid mechanics, when we consider the diffeomorphism group $\Diff(M)$ of a Riemannian manifold $M$ and the volumorphism group $\Diffmu(M) = \{ \eta\in \Diff(M): \eta^*\mu = \mu\}$ where $\mu$ is the Riemannian volume form on $M$. The simplest metric on $\Diff(M)$ is the non-invariant metric $\llangle u\circ\eta, v\circ\eta\rrangle_{\eta} = \int_M \langle u, v\rangle\circ\eta \, d\mu$, for which the geodesic equation is $\eta_{tt}=0$ (which leads to Burgers' equation $u_t + \nabla_uu = 0$, where $\eta_t=u\circ\eta$). The right-invariant metric is $\llangle u\circ\eta, v\circ\eta\rrangle_{\eta} = \int_M \langle u,v\rangle \, d\mu$, on which the geodesic equation is $u_t + \nabla_uu + (\diver{u}) u + \tfrac{1}{2} \grad \lvert u\rvert^2 = 0$, again with $\eta_t=u\circ\eta$. Both metrics agree on the submanifold $\Diffmu(M)$, on which the projected geodesic equation is $u_t+\nabla_uu = -\grad p$ with $\diver{u}=0$.}
nonlinear elliptic equation 
$$ \frac{\partial}{\partial t}\Big( \lvert \eta_s\rvert \, \eta_t\Big) + \frac{1}{2} \, \frac{\partial}{\partial s}\Big( \frac{\lvert \eta_t\rvert^2}{\lvert \eta_s\rvert} \, \eta_s\Big) = 0.$$

\subsection{The Michor-Mumford metric on the arc space}

Now we formally identify the quotient $\Imm_2/\Diff$ with the space $\Arc$ in order to compare the induced metric on $\Arc$ to our metric \eqref{curvesmetric}.

\begin{theorem}\label{mmmetricthm}
Define the standard reparametrization map $\Phi\colon \Imm_2\to \Arc$ by $\Phi(\eta) = \eta\circ h^{-1}$ where $h\in \Diff([-1,1])$ is given by $h(s) = \int_0^s \lvert \eta'(x)\rvert \, dx$. Then for any $k\in \Diff([-1,1])$, we have $\Phi(\eta\circ k) = \Phi(\eta)$, so that $\Phi$ is invariant under the action of $\Diff$. Hence $\Phi$ descends to a map from the quotient space $\Imm_2/\Diff$ into $\Arc$. 

There is a unique metric on $\Arc$ defined by the condition that $\Phi$ is a Riemannian submersion, and it is given by 
\begin{equation}\label{mmmetric}
\llangle u, v\rrangle_{\gamma} = \int_{-1}^1 \langle u(s), \gamma'(s)^{\perp}\rangle \langle v(s), \gamma'(s)^{\perp}\rangle \, ds,
\end{equation}
where $u$ and $v$ are in $T_{\gamma}\Arc$ and $\gamma'(s)^{\perp}$ is the rotation of the unit vector $\gamma'(s)$ in $\mathbb{R}^2$ by $90^{\circ}$.
\end{theorem}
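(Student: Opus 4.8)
The plan is to prove the two assertions in turn: first the $\Diff$-invariance of $\Phi$, which is a short change-of-variables computation, and then the identification of the Riemannian submersion metric, which is the substance of the theorem. For the invariance, fix $\eta\in\Imm_2$ and $k\in\Diff([-1,1])$, and write $h_\eta(s)=\int_0^s\lvert\eta'(x)\rvert\,dx$. Substituting $y=k(x)$ and using $k(0)=0$ (forced by oddness) together with $k'\ge 0$, I would show
\[
h_{\eta\circ k}(s)=\int_0^s\lvert\eta'(k(x))\rvert\,k'(x)\,dx=\int_0^{k(s)}\lvert\eta'(y)\rvert\,dy=h_\eta\bigl(k(s)\bigr),
\]
so that $h_{\eta\circ k}=h_\eta\circ k$ and hence $h_{\eta\circ k}^{-1}=k^{-1}\circ h_\eta^{-1}$. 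Then $\Phi(\eta\circ k)=(\eta\circ k)\circ k^{-1}\circ h_\eta^{-1}=\eta\circ h_\eta^{-1}=\Phi(\eta)$, giving invariance and hence a descent to $\Imm_2/\Diff$.

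For the metric, I would first recall the general principle: once a Riemannian submersion metric exists it is forced, being the unique metric for which $d\Phi$ restricts to an isometry on the horizontal distribution (the orthogonal complement of $\ker d\Phi$). Uniqueness is therefore automatic, and the only thing to check for existence is that $\llangle w^h,w^h\rrangle_\eta$, computed from \eqref{rightinvariantmetric} via a horizontal lift $w^h$, is independent of the fiber representative $\eta\in\Phi^{-1}(\gamma)$; this follows from the already-established fact that $\Diff$ acts by isometries of \eqref{rightinvariantmetric} and permutes fibers, hence carries horizontal spaces to horizontal spaces. Consequently I may compute everything at the canonical representative $\eta=\gamma\in\Arc$, where $\lvert\gamma'\rvert\equiv1$ and \eqref{rightinvariantmetric} reduces to the $L^2$ metric \eqref{curvesmetric}.

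Next I would compute the differential. Linearizing $\eta\mapsto\eta\circ h_\eta^{-1}$ along a path $\eta_t$ in $\Imm_2$ through $\gamma$ with $\dot\eta_0=u$, and using $h_\gamma=\mathrm{id}$, I get $d\Phi_\gamma(u)=u-a\,\gamma'$, where $a(s)=\int_0^s\langle\gamma'(x),u'(x)\rangle\,dx$ arises as the variation of the arclength reparametrization. As a sanity check, for $u=w\in T_\gamma\Arc$ the tangency $\langle w',\gamma'\rangle\equiv0$ gives $a\equiv0$, so $d\Phi_\gamma(w)=w$, consistent with $\Phi|_{\Arc}=\mathrm{id}$. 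The kernel is spanned by the reparametrization directions: for $u=f\gamma'$ one computes $a=f-f(0)=f$, so $d\Phi_\gamma(f\gamma')=0$, and the vertical space is $\{f\gamma'\}$. Its $L^2$-orthogonal complement is the space of pointwise-normal fields $\{u:\langle u,\gamma'\rangle\equiv0\}=\{g\,\gamma'^{\perp}\}$, since in $\mathbb{R}^2$ every vector orthogonal to the unit vector $\gamma'$ is a scalar multiple of $\gamma'^{\perp}$.

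Finally I would extract the metric. Given $w\in T_\gamma\Arc$, decompose $w=\langle w,\gamma'^{\perp}\rangle\gamma'^{\perp}+\langle w,\gamma'\rangle\gamma'$; the tangential part $\langle w,\gamma'\rangle\gamma'$ is vertical (it lies in $\ker d\Phi_\gamma$ by the kernel computation above, using that $\langle w,\gamma'\rangle$ is odd), so the horizontal lift is $w^h=w-\langle w,\gamma'\rangle\gamma'=\langle w,\gamma'^{\perp}\rangle\,\gamma'^{\perp}$. The submersion metric is then
\[
\llangle w,w\rrangle_\gamma=\llangle w^h,w^h\rrangle_\gamma=\int_{-1}^1\lvert w^h\rvert^2\,ds=\int_{-1}^1\langle w(s),\gamma'(s)^{\perp}\rangle^2\,ds,
\]
and polarization yields exactly \eqref{mmmetric}. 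I expect the main obstacle to be a functional-analytic one rather than an algebraic one: the length constraint defining $\Imm_2$ makes the genuine horizontal space a codimension-one subspace of $\{g\gamma'^{\perp}\}$, and the tangential component $\langle w,\gamma'\rangle$ need not vanish at the free endpoints $s=\pm1$, so strictly $w^h$ can fail to lie in $T\Imm_2$ and the orbit action is not free. This is precisely the non-freeness flagged before the theorem, and in the stated formal $C^\infty$ setting it is absorbed by density (the reparametrization directions $f\gamma'$ with $f(\pm1)=0$ are $L^2$-dense among $\{f\gamma'\}$), so the computation above gives the correct submersion metric.
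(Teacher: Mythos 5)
Your proof is correct and follows essentially the same route as the paper: establish $h_{\eta\circ k}=h_{\eta}\circ k$ for invariance, compute $(D\Phi)$, identify the vertical space $\{f\gamma'\}$ and its $L^2$-orthogonal complement $\{g\,\gamma'^{\perp}\}$, and read off the metric from the horizontal lift $\langle z,\gamma'^{\perp}\rangle\gamma'^{\perp}$ at the fiber representative $\eta=\gamma$. Your closing caveat about the length constraint and the endpoint values of $\langle w,\gamma'\rangle$ is a fair observation that the paper does not address, simply absorbing it into its declared formal $C^{\infty}$ setting and its earlier remark that the quotient construction ``doesn't quite work rigorously.''
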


\begin{proof}
First, if $\gamma = \Phi(\eta)$ then $\gamma'(s) = \frac{\eta'(h^{-1}(s))}{\lvert \eta'(h^{-1}(s))\rvert}$, so that $\lvert \gamma'(s)\rvert \equiv 1$. Hence $\Phi$ actually maps into $\Arc$. 
Now if $k\colon [-1,1]\to [-1,1]$ is a diffeomorphism with $k'>0$, then for the curve $\eta\circ k$ we get $\tilde{h} = h\circ k$, so that $\tilde{\gamma} = (\eta\circ k)\circ (h\circ k)^{-1} = \eta\circ h^{-1} = \gamma$. Hence $\Phi(\eta\circ k)=\Phi(\eta)$, so that $\Phi$ is invariant under the reparametrization action.

To get the metric, we first compute the derivative $D\Phi$. For any odd vector field $w$ along an odd immersion $\eta$, let $\chi(\varepsilon, s) = \eta(s) + \varepsilon w(s)$; then for sufficiently small $\varepsilon$ the map $s\mapsto \chi(\varepsilon, s)$ is still an odd immersion, and we have $(D\Phi)_{\eta}(w) = \frac{d}{d\varepsilon}\big|_{\varepsilon=0} \Phi\big(\chi(\varepsilon)\big)$. It is then easy to compute that 
\begin{multline}\label{DPhi}
(D\Phi)_{\eta}(w) = \beta\circ h^{-1}, \quad \text{where}\\ 
\beta(s) = w(s) - \big(\textstyle\int_0^s \langle w'(x), \eta'(x)/\lvert \eta'(x)\rvert\rangle  \, dx\big) \eta'(s)/\lvert \eta'(s)\rvert.
\end{multline}
If $\gamma = \Phi(\eta)=\Phi\circ h^{-1}$, then we can check that $z=(D\Phi)_{\eta}(w)$ actually satisfies $\langle z', \gamma'\rangle\equiv 0$ as expected for any $w$, and that the kernel of $(D\Phi)_{\eta}$ is the vertical space 
$$ \mathcal{V}_{\eta} = \left\{ f\eta' : \text{$f\colon [-1,1]\to \mathbb{R}$ is odd}\right\}.$$
The horizontal space is the orthogonal complement of the vertical space in the metric \eqref{rightinvariantmetric}, which is 
$$ \mathcal{H}_{\eta} = \left\{ f(\eta')^{\perp} : \text{$f\colon [-1,1]\to\mathbb{R}$ is odd}\right\}.$$

The metric on $\Arc$ which makes $\Phi$ a submersion is given for $z\in T_{\gamma}\Arc$ by 
$ \llangle z, z\rrangle_{\gamma} = \llangle w, w\rrangle_{\eta}$, where $\eta$ is any curve with $\Phi(\eta)=\gamma$, $w\in T_{\eta}\Imm_2$ is any \emph{horizontal} vector field with $(D\Phi)_{\eta}(w)=z$, and the right side is computed using the invariant metric \eqref{rightinvariantmetric}. Invariance of the metric \eqref{rightinvariantmetric} ensures that we get the same $\llangle z,z\rrangle_{\gamma}$ no matter which $\eta$ we use, so we might as well use $\eta=\gamma$. Then we can compute that the unique horizontal $w$ with $(D\Phi)_{\gamma}(w) = z$ is $w(s) = \langle z(s), \gamma'(s)^{\perp}\rangle \gamma'(s)^{\perp}$, and formula \eqref{mmmetric} follows. 
\end{proof}

Next let us compute the geodesic equation on $\Arc$ in the Michor-Mumford metric. The following lemma is helpful in finding compatibility conditions for it.

\begin{lemma}\label{compatlemma}
If $\eta\colon [0,T]\times [-1,1]\to \mathbb{R}^2$ is a smooth curve, and if we define $\ell = \lvert \eta_s\rvert$ and
\begin{equation}\label{generalabkappadef}
\kappa = \frac{\langle \eta_{ss}, \eta_s^{\perp}\rangle}{\ell^3}, 
\quad \omega = \frac{\langle \eta_{st}, \eta_s^{\perp}\rangle}{\ell^2}, \quad a=\frac{\langle \eta_t, \eta_s^{\perp}\rangle}{\ell}, \quad b = \frac{\langle \eta_t, \eta_s\rangle}{\ell},
\end{equation}
then we have the compatibility equations
\begin{equation}\label{compatibility}
\ell_t = b_s - a\kappa \ell, \quad 
\partial_t(\ell \kappa) = \omega_s, \quad \text{and}\quad 
a_s = \ell \omega - b\kappa \ell
\end{equation}
\end{lemma}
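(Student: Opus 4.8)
The plan is to introduce the moving orthonormal frame adapted to the curve and to derive all three identities from the equality of mixed partial derivatives of $\eta$. Write $T = \eta_s/\ell$ for the unit tangent and $N = \eta_s^{\perp}/\ell = T^{\perp}$ for the unit normal, so that $\{T,N\}$ is a positively oriented orthonormal basis of $\mathbb{R}^2$ at each point (this is legitimate since $\eta$ is smooth and an immersion, so $\ell>0$ and the frame is smooth on the whole domain). The first step is to re-express the four scalar quantities in \eqref{generalabkappadef} as frame components. Using $\eta_s = \ell T$ together with the product rule, I would compute $\eta_{ss} = \ell_s T + \ell T_s$ and observe that $\langle T_s, T\rangle = 0$ (since $\lvert T\rvert \equiv 1$); this gives $\kappa = \langle T_s, N\rangle/\ell$ and hence the frame derivative $T_s = \ell\kappa N$. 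Applying the $\perp$ operation, which satisfies $(v^{\perp})^{\perp}=-v$, then yields $N_s = -\ell\kappa T$. In the same way, differentiating $\eta_s = \ell T$ in $t$ gives $\omega = \langle T_t, N\rangle$, so that $T_t = \omega N$ and $N_t = -\omega T$. Finally $a$ and $b$ are exactly the normal and tangential components of $\eta_t$, so $\eta_t = bT + aN$.

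With these frame equations in hand, the three compatibility relations fall out of two applications of the symmetry of second derivatives. First I would compute $\eta_{st}$ in two ways: from $\eta_s = \ell T$ one gets $\eta_{st} = \ell_t T + \ell\omega N$, while from $\eta_t = bT + aN$ together with the derivative formulas for $T$ and $N$ one gets $\eta_{ts} = (b_s - a\ell\kappa)T + (a_s + b\ell\kappa)N$. Equating the $T$-components gives $\ell_t = b_s - a\kappa\ell$, and equating the $N$-components gives $a_s = \ell\omega - b\kappa\ell$, which are the first and third identities in \eqref{compatibility}. For the remaining identity I would instead use $T_{st} = T_{ts}$: expanding $T_{st} = \partial_t(\ell\kappa\, N)$ and $T_{ts} = \partial_s(\omega N)$ with the help of $N_t = -\omega T$ and $N_s = -\ell\kappa T$, the $T$-components agree automatically (both equal $-\ell\kappa\omega\, T$), while the $N$-components give $\partial_t(\ell\kappa) = \omega_s$, the second identity.

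The computation is essentially mechanical once the frame derivatives are established, so I do not expect a deep obstacle. The only point requiring care is the bookkeeping with the $\perp$ operation and the orthogonality relations $\langle T, T_s\rangle = \langle T, T_t\rangle = 0$, which are precisely what allow one to read off the frame-component formulas for $\kappa$, $\omega$, $a$, and $b$ without extraneous tangential terms; a sign error in $(v^{\perp})^{\perp}=-v$ or in $N_s$, $N_t$ would corrupt all three identities at once. Since $\eta$ is assumed smooth, the interchange of $\partial_s$ and $\partial_t$ used throughout is justified, and the frame is smooth because $\ell>0$, so every quantity above is well defined.
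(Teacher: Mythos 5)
Your proof is correct and takes essentially the same route as the paper: both decompose $\eta_t$, $\eta_{st}$, $\eta_{ss}$ in the moving tangent/normal frame and extract the three identities from equality of mixed partials (the paper works with the unnormalized basis $\{\eta_s,\eta_s^{\perp}\}$ and uses $\eta_{sst}=\eta_{sts}$ where you use $T_{st}=T_{ts}$, a purely cosmetic difference). All frame derivatives and sign conventions check out.
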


\begin{proof}
We write \begin{equation*}
\eta_t  = \frac{a}{\ell} \eta_s^{\perp} + \frac{b}{\ell} \eta_s,
\qquad \eta_{st} = \omega \eta_s^{\perp} + \frac{\ell_t}{\ell} \eta_s,\qquad
\eta_{ss} = \kappa \ell \eta_s^{\perp} + \frac{\ell_s}{\ell} \eta_s.
\end{equation*}
Differentiating $\eta_{t}$ with respect to $s$ and matching coefficients with $\eta_{st}$, we get $a_s/\ell = \omega - b\kappa$ and $\ell_t = b_s - a\kappa \ell$. Then using $\eta_{sts} = \eta_{sst}$ we obtain $\partial_t(\kappa \ell) = \omega_s$.
\end{proof}

\begin{theorem}\label{mmgeodesicthm}
A geodesic $\eta$ in $\Arc$ with the metric \eqref{mmmetric} satisfies the equations 
\begin{equation}\label{mmgeodesic}
a_t = \tfrac{1}{2} \kappa a^2 + ba_s, \qquad \kappa_t = \omega_s, \qquad b_s = \kappa a, \qquad a_s = \omega - \kappa b,
\end{equation}
where
\begin{equation}\label{termsdef}
a = \langle \eta_t, \eta_s^{\perp}\rangle, \quad b = \langle \eta_t, \eta_s\rangle, \quad 
\kappa = \langle \eta_{ss}, \eta_s^{\perp}\rangle, \quad \omega = \langle \eta_{st}, \eta_s^{\perp}\rangle,
\end{equation}
and $a(-1)=a(1)=0$. 
\end{theorem}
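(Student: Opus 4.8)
The plan is to characterize a geodesic as a critical point of the energy $E(\eta)=\tfrac12\int_0^T\llangle\eta_t,\eta_t\rrangle_{\eta}\,dt=\tfrac12\int_0^T\int_{-1}^1 a^2\,ds\,dt$, where $a=\langle\eta_t,\eta_s^{\perp}\rangle$ as in \eqref{termsdef} and the integrand comes from \eqref{mmmetric}, taken over paths $\eta(t,\cdot)$ in $\Arc$ with fixed endpoints at $t=0,T$. The first observation is that three of the four equations in \eqref{mmgeodesic} are purely kinematic. Since every competing path stays in $\Arc$, it is unit speed, so $\ell=\lvert\eta_s\rvert\equiv 1$ and $\ell_t=\ell_s=0$. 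Substituting $\ell\equiv1$ into the three compatibility identities \eqref{compatibility} of Lemma \ref{compatlemma} immediately gives $b_s=\kappa a$, $\kappa_t=\omega_s$, and $a_s=\omega-\kappa b$. These hold for \emph{any} path in $\Arc$ and carry no dynamical content, so the whole force of the geodesic condition sits in the remaining equation $a_t=\tfrac12\kappa a^2+ba_s$.

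To obtain that equation I would compute the first variation of $E$ under the pointwise constraint $\lvert\eta_s\rvert^2\equiv1$, introducing a Lagrange multiplier $\lambda(t,s)$ (the tension) and varying $\eta$ by an \emph{unconstrained} field $W$ that vanishes at $t=0,T$ and at the fixed point $s=0$. Using the Frenet relations valid on $\Arc$, namely $\eta_{ss}=\kappa\eta_s^{\perp}$, $\partial_s(\eta_s^{\perp})=-\kappa\eta_s$, $\eta_{st}=\omega\eta_s^{\perp}$ (the last because $\ell_t=0$), together with the decomposition $\eta_t=a\eta_s^{\perp}+b\eta_s$, one expands $\delta a=\langle W_t,\eta_s^{\perp}\rangle+\langle\eta_t,W_s^{\perp}\rangle$ and integrates by parts in $s$ and $t$. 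Collecting the normal and tangential components of the resulting Euler–Lagrange field produces the pair $\lambda_s=a a_s=\tfrac12(a^2)_s$ and $a_t=b a_s+\lambda\kappa$. The first identifies the tension up to a function of $t$, and substituting it into the second is exactly what yields the target equation.

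It remains to fix that integration constant and establish the boundary condition, which is where the free endpoint enters. The boundary contribution from the $s$-integration by parts is $-\int_0^T\big[\langle W,a\eta_t^{\perp}+\lambda\eta_s\rangle\big]_{s=-1}^{s=1}\,dt$; since $W$ is unconstrained at $s=\pm1$, criticality forces $a\eta_t^{\perp}+\lambda\eta_s=0$ there. Taking components along $\eta_s$ and $\eta_s^{\perp}$ gives $\lambda=a^2$ and $ab=0$ at $s=\pm1$. Because $b(\pm1)=\int_0^{\pm1}a\kappa\,ds$ (from $b_s=\kappa a$ and $b(0)=0$) is nonzero for a generic geodesic, and vanishing tension at a free end is the natural physical condition, this yields $a(\pm1)=0$, hence $\lambda(\pm1)=0$. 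Integrating $\lambda_s=\tfrac12(a^2)_s$ with this boundary value gives $\lambda\equiv\tfrac12a^2$, so $a_t=ba_s+\lambda\kappa$ becomes $a_t=\tfrac12\kappa a^2+ba_s$, completing \eqref{mmgeodesic}.

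The integration by parts is routine; the main obstacle is the boundary analysis at the free end: correctly reading off the natural condition, disentangling the two scalar relations $\lambda=a^2$ and $ab=0$, and thereby pinning down both $a(\pm1)=0$ and the integration constant so that $\lambda=\tfrac12a^2$ holds exactly. One must also track the parity forced by oddness (so that $a$, $b$, $\kappa$ vanish at $s=0$ while $\omega$ need not) to confirm that no spurious boundary term survives at the fixed point. An alternative route would be to invoke O'Neill's formula for the submersion $\Phi$, lifting horizontally to $\Imm_2$ and projecting the geodesic equation of the metric \eqref{rightinvariantmetric}; but the direct constrained variation is cleaner and has the bonus of exhibiting $\lambda=\tfrac12a^2$ as the tension.
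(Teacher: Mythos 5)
Your overall strategy---first variation of the energy $\tfrac12\int\int a^2\,ds\,dt$, with the three kinematic identities in \eqref{mmgeodesic} obtained for free from Lemma \ref{compatlemma} by setting $\ell\equiv1$---is the same as the paper's, and your Euler--Lagrange computation is correct as far as it goes: with an unconstrained variation $W$ and a multiplier $\lambda$ for the pointwise constraint $\lvert\eta_s\rvert^2\equiv1$ one does get $\lambda_s=aa_s$ and $a_t=ba_s+\lambda\kappa$ in the interior, and the natural boundary condition $(\lambda-a^2)\eta_s+ab\,\eta_s^{\perp}=0$ at $s=\pm1$. The paper instead varies directly by tangent fields $u\in T_{\eta}\Arc$ (no multiplier) and arrives at the first-variation formula \eqref{evariation}; on the interior the two routes agree.

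The genuine gap is the boundary condition $a(\pm1)=0$, which is part of the conclusion of Theorem \ref{mmgeodesicthm} and which your argument also needs in order to fix the integration constant in $\lambda=\tfrac12a^2+c(t)$. Your natural boundary conditions yield only $\lambda(\pm1)=a(\pm1)^2$ and $a(\pm1)b(\pm1)=0$: the $\eta_s$-component of the condition is absorbed into determining the boundary value of the multiplier and carries no information about $a$, while the $\eta_s^{\perp}$-component leaves open the alternative $b(\pm1)=0$. The appeal to ``$b(\pm1)\neq0$ for a generic geodesic'' and to ``vanishing tension at a free end is the natural physical condition'' is not a derivation---the theorem asserts $a(\pm1)=0$ for \emph{every} geodesic, and the tension here is not an independent physical input but precisely the multiplier you are trying to determine. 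Without $a(\pm1)=0$ you are forced to $c=\tfrac12a(\pm1)^2$, so your equation becomes $a_t=ba_s+\tfrac12\kappa a^2+\tfrac12\kappa a(\pm1)^2$, which is not \eqref{mmgeodesic}. This is exactly where the paper's constrained-variation route does better: in \eqref{evariation} the boundary term is $\langle u^{\perp},a\eta_t\rangle$ with $u(\pm1)$ ranging over essentially all of $\mathbb{R}^2$ as $u$ ranges over $T_{\eta}\Arc$, so one reads off the full vector condition $a\,\eta_t=0$ at $s=\pm1$ and hence $a(\pm1)=0$; no component of it is hidden inside a multiplier. The clean repair is to carry out the boundary analysis with tangent variations as in \eqref{evariation}, after which your identification $\lambda=\tfrac12a^2$ follows and the rest of your argument closes.
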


\begin{proof}
If $\eta$ is a curve in $\Arc$ and $u$ is a  variation field along $\eta$, then it is easy to compute that the first variation of energy in the direction $u$ is 
\begin{equation}\label{evariation} 
\int_0^T \big(\langle u^{\perp}, a\eta_t\rangle\big)_{s=-1}^{s=1} \, dt 
+ \int_0^T \int_{-1}^1 \langle u^{\perp}, a_t \eta_s - a_s \eta_t\rangle \, ds\,dt.
\end{equation}
For $\eta$ to be a geodesic, this must vanish for every $u\in T_{\eta}\Arc$, i.e., whenever $\langle u, \eta'\rangle\equiv 0$. From the first term we get the boundary condition $a(-1)=a(1)=0$, and from the second term we get the equation $a_t = ba_s + \tfrac{1}{2}\kappa a^2$. 
%
The other equations are
obtained by setting $\ell\equiv 1$ in Lemma \ref{compatlemma}.
\end{proof}

The geodesic equations take a slightly different form than that given in \cite{MM}; there the authors use
the normalization $b\equiv 0$ rather than our normalization $\ell\equiv 1$. Of course, the images of the curves in $\mathbb{R}^2$ are necessarily the same.

The drawback of the $L^2$ Michor-Mumford metric, as discussed in \cite{MM}, is that the induced Riemannian distance between elements of $\Arc$ is zero; that is, for any pair of curves $\gamma_1$ and $\gamma_2$ in $\Arc$ and any $\varepsilon>0$, there is a curve $\eta$ in $\Arc$ with $\eta(0)=\gamma_1$ and $\eta(1)=\gamma_2$ such that $\int_0^1 \lVert \dot{\eta}(t)\rVert \, dt < \varepsilon$. As shown in 
Proposition \ref{nondegeneratedistance}, our metric on $\Arc$ \emph{does} give a genuine nondegenerate distance.

We now ask what a right-invariant metric on $\Imm_2([-1,1], \mathbb{R}^2)$ would have to look like in order to give our metric \eqref{curvesmetric} on $\Arc$ as a Riemannian submersion using the procedure in Theorem \ref{mmmetricthm}. 

\begin{theorem}\label{modifiedsubmersion}
Let us define a Riemannian metric on $\Imm_2([-1,1], \mathbb{R}^2)$ as follows: if $\eta$ is a curve and $w$ is a vector field along $\eta$, let
\begin{equation}\label{modifiedinvariant}
\llangle w, w\rrangle_{\eta} = \int_{-1}^1 \frac{\langle w(s), \eta'(s)^{\perp}\rangle^2}{\lvert \eta'(s)\rvert} \, ds + \int_{-1}^1 \lvert \eta'(s)\rvert \left( \int_0^s \frac{\langle w'(x), \eta'(x)\rangle}{\lvert \eta'(x)\rvert} \, dx\right)^2 \, ds.
\end{equation}
Then the metric \eqref{modifiedinvariant} is invariant under the reparametrization action by $\Diff([-1,1])$, and the map $\Phi\colon \Imm_2\to\Arc$ defined as in Theorem \ref{mmmetricthm} is a Riemannian submersion onto the arc space $\Arc$ in the metric \eqref{curvesmetric}. 
\end{theorem}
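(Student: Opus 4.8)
The plan is to reuse the machinery already set up for Theorem~\ref{mmmetricthm}: the map $\Phi$ and its derivative \eqref{DPhi} are exactly the same, and in particular the vertical space $\ker(D\Phi)_\eta = \{f\eta' : f \text{ odd}\}$ is unchanged, since it depends only on $\Phi$ and not on the choice of metric on $\Imm_2$. Thus only two things are genuinely new: the reparametrization invariance of the metric \eqref{modifiedinvariant}, and the verification that with this particular metric $D\Phi$ becomes an isometry from the horizontal space onto $(\Arc, \eqref{curvesmetric})$ rather than onto the Michor--Mumford metric \eqref{mmmetric}.

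Invariance I would prove by the same change-of-variables argument used for \eqref{rightinvariantmetric}. For an odd, orientation-preserving $k\in\Diff([-1,1])$ (so $k(0)=0$ and $k'>0$) the action sends $\eta\mapsto\eta\circ k$ and $w\mapsto w\circ k$, whence $(\eta\circ k)' = (\eta'\circ k)\,k'$, $\lvert(\eta\circ k)'\rvert = (\lvert\eta'\rvert\circ k)\,k'$, and $(\eta\circ k)'^{\perp} = (\eta'^{\perp}\circ k)\,k'$. Substituting into the first term of \eqref{modifiedinvariant} and setting $u=k(s)$, all the powers of $k'$ produced by $w\circ k$ and $(\eta\circ k)'^{\perp}$ cancel against those from $\lvert(\eta\circ k)'\rvert$ and from $ds = du/k'$. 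For the second term the inner integrand $\langle w',\eta'\rangle/\lvert\eta'\rvert$ picks up exactly one factor of $k'$, which together with $k(0)=0$ turns $\int_0^s(\cdots\circ k)\,k'\,dx$ into $\int_0^{k(s)}(\cdots)\,du$; the outer weight $\lvert\eta'\rvert\,ds$ then becomes $\lvert\eta'\rvert\,du$ under the same substitution. Both terms are therefore unchanged, so \eqref{modifiedinvariant} is invariant.

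For the submersion statement I would work at a unit-speed representative $\eta=\gamma$, where $h=\mathrm{id}$ and the formulas collapse; invariance then extends the conclusion to the whole fiber. Writing a field along $\gamma$ in the moving frame as $w = a\,\gamma'^{\perp} + b\,\gamma'$ and using the Frenet relation $\gamma''=\kappa\gamma'^{\perp}$, I compute $\langle w',\gamma'\rangle = b'-a\kappa$, so the inner integral in \eqref{modifiedinvariant} becomes $b(s)-\int_0^s a\kappa\,dx$. The heart of the matter is the purely normal field $w=a\,\gamma'^{\perp}$: here \eqref{DPhi} gives $z:=(D\Phi)_\gamma(w) = a\,\gamma'^{\perp} + \bigl(\int_0^s a\kappa\,dx\bigr)\gamma'$, which indeed lies in $T_\gamma\Arc$, and a direct computation gives
\begin{equation*}
\llangle w,w\rrangle_{\gamma} = \int_{-1}^1 a^2\,ds + \int_{-1}^1\Bigl(\int_0^s a\kappa\,dx\Bigr)^2 ds = \int_{-1}^1 \lvert z\rvert^2\,ds = \llangle z,z\rrangle_{\gamma},
\end{equation*}
which is exactly the $L^2$ metric \eqref{curvesmetric}. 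Thus $D\Phi$ carries the normal fields isometrically onto $(T_\gamma\Arc, \eqref{curvesmetric})$; the nonlocal second term of \eqref{modifiedinvariant} has been rigged precisely so as to reproduce the tangential energy $\int(\int_0^s a\kappa\,dx)^2$ that $D\Phi$ injects into the image.

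The step I expect to be the main obstacle is confirming that the normal fields $\{a\,\gamma'^{\perp}\}$ really play the role of the horizontal space for the metric \eqref{modifiedinvariant}, i.e.\ that the isometric identification above is compatible with the orthogonal splitting $T\Imm_2 = \mathcal{H}\oplus\mathcal{V}$ required of a genuine Riemannian submersion. This is where the coupling introduced by the nonlocal second term must be controlled: one must analyze the cross pairing $\llangle a\,\gamma'^{\perp}, f\gamma'\rrangle_{\gamma}$, which reduces to an expression of the form $\int_{-1}^1\bigl(\int_0^s a\kappa\,dx\bigr)f\,ds$, and track the parity of the factors ($a$ even, $\kappa$ odd, $f$ odd) to see how the horizontal and vertical distributions interact. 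Once this splitting is pinned down and the isometry above is in hand, the submersion property follows at $\gamma$, and carrying the $\lvert\eta'\rvert$ weights through the change of variables $s\mapsto h^{-1}(s)$ extends it from the unit-speed representative to an arbitrary $\eta\in\Imm_2$.
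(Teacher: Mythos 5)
On the two points the paper actually verifies, your proposal coincides with the paper's proof: invariance of \eqref{modifiedinvariant} is checked by the same change-of-variables computation (which the paper compresses to one sentence), and the submersion claim is reduced to a unit-speed representative $\gamma$, where one takes the pointwise-normal field $w=a\,\gamma'^{\perp}$, computes $(D\Phi)_{\gamma}(w)=a\,\gamma'^{\perp}+\bigl(\int_0^s a\kappa\,dx\bigr)\gamma'$ from \eqref{DPhi}, and checks that \eqref{modifiedinvariant} returns $\int_{-1}^1\bigl(a^2+f^2\bigr)\,ds$ with $f(s)=\int_0^s a\kappa\,dx$, i.e.\ the $L^2$ norm of the image. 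That computation is correct and is exactly the paper's.

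Where you go beyond the paper is the ``main obstacle'' you flag at the end, and there your proposed resolution fails. The paper never verifies that the pointwise-normal fields form the orthogonal complement of the vertical space $\{f\gamma'\}$ in the \emph{new} metric; it simply carries over the horizontal distribution of \eqref{rightinvariantmetric} from Theorem~\ref{mmmetricthm}. Your parity bookkeeping cannot close this gap: since $\gamma$ is odd, $\gamma'$ and $\gamma'^{\perp}$ are even, so $a$ is odd (not even, as you wrote), $\kappa$ is odd, and $f$ is odd; hence $\bigl(\int_0^s a\kappa\,dx\bigr)f(s)$ is an \emph{even} function of $s$, and the cross pairing $\llangle a\gamma'^{\perp},\,f\gamma'\rrangle_{\gamma}=-\int_{-1}^1\bigl(\int_0^s a\kappa\,dx\bigr)f(s)\,ds$ is generically nonzero. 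In fact the true orthogonal complement of the vertical space in \eqref{modifiedinvariant} consists of the fields $a\gamma'^{\perp}+\bigl(\int_0^s a\kappa\,dx\bigr)\gamma'$, on which the nonlocal second term of \eqref{modifiedinvariant} vanishes identically, so the quotient norm produced by the strict definition of Riemannian submersion (horizontal $=$ orthogonal complement of the fiber) would be $\int_{-1}^1 a^2\,ds$, i.e.\ the Michor--Mumford metric \eqref{mmmetric} rather than \eqref{curvesmetric}. So the orthogonality you identify as the remaining step is not something to be supplied but something that actually fails; what the paper's argument establishes (and what your computation reproduces) is that $D\Phi$ is an isometry from the reparametrization-invariant complement $\{f(\eta')^{\perp}\}$ --- horizontal for \eqref{rightinvariantmetric}, not for \eqref{modifiedinvariant} --- onto $\Arc$ with the metric \eqref{curvesmetric}, and the theorem should be read in that weaker sense.
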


\begin{proof}
To check invariance, we just need to verify $\llangle w\circ k, w\circ k\rrangle_{\eta\circ k} = \llangle w, w\rrangle_{\eta}$ for any increasing diffeomorphism $k$ of $[-1,1]$. This is straightforward from the change of variables formula.

To check the submersion condition, we suppose we have a curve $\gamma$ with $\lvert \gamma'(s)\rvert = 1$ and that $w$ is a horizontal vector field along $\gamma$, i.e., that $\langle w(s), \gamma'(s)\rangle \equiv 0$. Then as in Theorem \ref{mmmetricthm}, we have 
$$(D\Phi)_{\gamma}(w)(s) = w(s) - \big( \textstyle \int_0^s \langle w'(x), \gamma'(x)\rangle \, dx\big) \gamma'(s).$$ Suppose $z\in T_{\gamma}\Arc$, i.e., that $z$ is a vector field along $\gamma$ with $\langle z'(s), \gamma'(s)\rangle \equiv 0$. Then $z(s) = f(s) \gamma'(s) + g(s)\gamma'(s)^{\perp}$ where $f'(s) = \kappa(s) g(s)$, and to get $(D\Phi)(w) = z$ where $w$ is horizontal, we must have $w(s) = g(s) \gamma'(s)^{\perp}$. We can then check that the definition \eqref{modifiedinvariant} yields 
$$ \llangle w,w\rrangle_{\gamma} = \int_{-1}^1 \big[ f(s)^2 +g(s)^2 \big] \, ds, $$
as desired.
\end{proof}

Of course there are other choices for \eqref{modifiedinvariant}; only the inner product of horizontal vectors is determined by the submersion condition, and we can use any formula at all for vertical vectors.

\subsection{The $\dot{H}^1$ metric on $\Arc$}

Finally we relate both our metric \eqref{curvesmetric} and the Michor-Mumford metric \eqref{mmmetric} to another choice of distance on the arc space $\Arc$. Klassen et al.~\cite{KSMJ} pointed out that unit-speed curves in $\mathbb{R}^2$ can most easily be represented in terms of their angular representation $\theta$ defined by $\gamma'(s) = \big( \cos{\theta(s)}, \sin{\theta(s)}\big)$, as we did in Theorem \ref{arcsubmfd} to get a coordinate chart on $\Arc^m$. Since $\gamma(0)=0$ in our space, we obtain $\gamma$ by integrating: 
\begin{equation}\label{thetatogamma}
\gamma(s) = \left( \int_0^s \cos{\theta(x)}\,dx, \int_0^s \sin{\theta(x)}\,dx\right).
\end{equation}
Since the space of (even) functions $\theta \colon [-1,1]\to \mathbb{R}$ is a linear space, it has a simple choice of Riemannian metric arising from the standard Hilbert structure. That is, if $\gamma$ is a curve with angular representation $\theta$, and $\omega$ is a vector field along $\theta$, then the KSMJ metric is
\begin{equation}\label{ksmjmetric}
\llangle \omega, \omega\rrangle_{\theta} = \int_{-1}^1 \omega(s)^2 \, ds.
\end{equation}
In the physical space $\mathbb{R}^2$, the angular tangent vector $\omega$ corresponds to the vector field 
$$ 
u(s) = \int_0^s \omega(x) \gamma'(x)^{\perp}\,dx,$$ and thus the KSMJ metric comes from the $\dot{H}^1$ metric on $\Arc$ given by 
\begin{equation}\label{ksmjdot}
\llangle u, u\rrangle_{\gamma} = \int_{-1}^1 \langle u'(s), u'(s)\rangle \, ds.
\end{equation}
Again we note that KSMJ were interested in periodic curves on $S^1$ rather than odd curves on $[-1,1]$, but the formulas are generally quite similar apart from normalizations. The metric \eqref{ksmjdot} has also been studied by Younes et al.~\cite{younes}; a similar metric arises in the study of the Hunter-Saxton equation as well (see Khesin-Misio{\l}ek~\cite{KM} and Lenells~\cite{lenells}). 
%
%
%
%

In a sense then, our metric \eqref{curvesmetric} lies between the metric \eqref{ksmjdot} (for which there are unique minimizing geodesics and a nondegenerate distance) and the metric \eqref{mmmetric} (for which geodesics cannot be minimizing and the distance is always degenerate). Our geodesics fail to be minimizing by Theorem \ref{notc1}, but our distance is nondegenerate by Proposition \ref{nondegeneratedistance}. Furthermore the geometry induced can be approximated by finite-dimensional objects, as in \cite{whipsandchains}, where unit-speed curves are well-approximated by a chain of points joined by rigid rods of fixed length, which may be helpful for numerical approximations of curves in this geometry.

\appendix

\section{Other boundary conditions}\label{circleappendix}

In this paper we have exclusively studied the boundary condition corresponding to a whip
with one fixed end and one free end. This is the most physically relevant condition for an 
actual whip (a person swings the whip to give it an initial position and velocity, then holds the handle
basically fixed while the other end swings freely). As shown in \cite{whipsandchains}, the easiest 
way to handle the technical complications of a fixed end is to extend the curve through the origin to
be odd; then the boundary conditions work out automatically. Hence we have essentially reduced the 
situation with one fixed and one free end to the situation with two free ends. There is no substantial 
difference in any of the results when dealing with two free ends even if the curve is not odd.
With two fixed ends, the situation is more complicated. (Physically this might represent a jump rope being held at both ends.) The same technical issues arise, but now it is less obvious how to extend the whip to be odd on both ends; of course it can be done, but then we end up with an infinite string and lose some of the benefits of compactness.

Geometrically, however, none of these boundary conditions are nearly as relevant as the periodic condition, since we are interested in curves that form boundaries of planar objects (and hence cannot themselves have a boundary). Many aspects of this situation are technically easier than the one-fixed/one-free condition we have considered, since we can do everything in terms of ordinary Sobolev spaces on the circle rather than weighted Sobolev spaces on the interval. The major differences are in the upper and lower bounds of the tension, and in the fact that periodicity forces $\int_{S^1} \gamma'(s) \, ds=0$, which shows up as an extra constraint in some equations.

Throughout this appendix we work with the circle of length $1$. The space of curves is the ordinary Sobolev space $\Curve^m(S^1) = H^m(S^1, \mathbb{R}^2)$, and the subset of arc-length parametrized curves is $\Arc^m(S^1) = \{\gamma \in \Curve^m : \lvert \gamma'(s)\rvert^2 \equiv 1\}$. Let $\Func^{m-1}(S^1)$ denote the space of real-valued functions of class $H^{m-1}$. The proof that $\Arc^m(S^1)$ is a submanifold of $\Curve^m(S^1)$ is both simpler and works in more cases for periodic boundary conditions than for one fixed and one free end.

\begin{theorem}\label{circsubmfd} 
If $m\ge 2$, then $\Arc^m(S^1)$ is a smooth Hilbert submanifold of $\Curve^m(S^1)$.
\end{theorem}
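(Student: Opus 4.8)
The plan is to mirror the strategy used for the interval case in Theorem~\ref{arcsubmfd}, but to exploit the fact that on $S^1$ we are working with ordinary (unweighted) Sobolev spaces, which removes the obstructions that forced $m\ge 3$ there. The core idea is again to produce a coordinate chart on $\Curve^m(S^1)$ in which $\Arc^m(S^1)$ appears locally as a closed linear subspace. First I would write $\gamma'(s) = e^{\xi(s)}$ with $\xi = \psi + i\theta$ (identifying $\mathbb{R}^2$ with $\mathbb{C}$), so that $\lvert \gamma'\rvert \equiv 1$ corresponds exactly to $\psi \equiv 0$, i.e.\ to $\xi$ being purely imaginary. The map $\gamma \mapsto \xi$ should be a diffeomorphism from an open subset of $\Curve^m(S^1)$ (those curves with $\lvert \gamma'\rvert$ bounded away from $0$) onto an open subset of a Hilbert space of $\mathbb{C}$-valued $H^{m-1}$ functions, and under this chart $\Arc^m(S^1)$ is carried to the closed subspace of purely imaginary functions.

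The key steps, in order, are as follows. First I would establish that on $S^1$ the correspondence $\gamma \in \Curve^m \iff \xi \in H^{m-1}(S^1,\mathbb{C})$ (given the lower bound $\inf\lvert\gamma'\rvert>0$) is a bijection with smooth inverse; this uses the same F\`aa di Bruno expansions~\eqref{etatoxi}--\eqref{xitoeta} as before, but now the estimates are governed by the ordinary multiplicative Sobolev inequality on $S^1$ rather than the weighted Lemma~\ref{weightedestimateslemma}. Second, and this is the step that becomes genuinely easier in the periodic setting, I would verify that $\sup\lvert\psi\rvert$ and hence the upper and lower bounds on $\lvert\gamma'\rvert$ are controlled by the $H^{m-1}$ norm of $\xi$ whenever $m\ge 2$: the ordinary Sobolev embedding $H^{1}(S^1)\hookrightarrow C^0(S^1)$ immediately gives a supremum bound on $\psi = \Re\xi$ from $\lVert\xi\rVert_{H^1}$, so there is no analogue of the failure at $j=0$ (the counterexample~\eqref{counterexample}) that obstructed the interval case. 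Third, having the chart, I would observe that the preimage of $\Arc^m(S^1)$ is exactly $\{0\}\times H^{m-1}(S^1,\mathbb{R})$ (the imaginary axis), a closed linear subspace, which is precisely the submanifold condition.

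The one feature genuinely new to $S^1$, which I would need to address, is the \emph{closure condition}: a curve on the circle must close up, so that $\int_{S^1}\gamma'(s)\,ds = \int_{S^1} e^{\xi(s)}\,ds = 0$. This is an extra constraint compared with the interval and is mentioned in the appendix's preamble. I would handle it by regarding $\Curve^m(S^1)$ as already consisting of genuine closed curves, so that $\Arc^m(S^1)$ is cut out inside this closed-curve space purely by the unit-speed condition $\psi\equiv 0$; the closure constraint $\int_{S^1}e^{\xi}=0$ is then common to both $\Curve^m(S^1)$ and $\Arc^m(S^1)$ and does not affect the local product structure of the chart. Alternatively one can simply note that unit speed and closure together are equivalent to $\psi\equiv 0$ together with $\int_{S^1}e^{i\theta}=0$, and since the latter is inherited from the ambient space it imposes no additional condition on the fibre direction.

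The main obstacle I expect is purely bookkeeping rather than conceptual: one must confirm that the product estimates controlling $\lVert\gamma\rVert_{H^m}$ in terms of $\lVert\xi\rVert_{H^{m-1}}$ (and conversely) genuinely close up at the lowest admissible index $m=2$, where $\xi\in H^1(S^1)$ is merely continuous and not yet in any algebra of the naive sort. The resolution is that at the top order in~\eqref{etatoxi} only a single factor $\xi^{(m-1)}$ or $\xi^{(m)}$ can appear with the remaining factors of strictly lower order (since $\sum j\,k_j = m$), so the highest-order term is linear and is controlled directly, while the lower-order factors are bounded in $C^0$ by the Sobolev embedding. This is the same combinatorial observation that saved the estimate~\eqref{highestorder} in the interval case, and it works here at $m=2$ precisely because the weights are gone.
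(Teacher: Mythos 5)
Your route is genuinely different from the paper's: the paper does not build an exponential chart at all, but applies the regular value theorem to the map $\mathcal{J}\colon \Curve^m(S^1)\to\Func^{m-1}(S^1)$, $\mathcal{J}(\eta)=\lvert\eta'\rvert^2$, which is smooth because $H^{m-1}(S^1)$ is closed under multiplication for $m\ge 2$, and then proves that $(D\mathcal{J})_\gamma(u)=2\langle u',\gamma'\rangle$ is surjective at every $\gamma\in\Arc^m(S^1)$. Your observations about why the interval-case obstructions disappear on $S^1$ (the unweighted embedding $H^1(S^1)\hookrightarrow C^0(S^1)$ replacing the failed $j=0$ case of the weighted Sobolev lemma, and the combinatorics of the top-order term in the F\`aa di Bruno expansion) are correct as far as they go.

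However, there is a genuine gap at exactly the point where the paper's proof does its real work. The map $\gamma\mapsto\xi$ with $\gamma'=e^{\xi}$ does not send an open subset of $\Curve^m(S^1)$ onto an open subset of $H^{m-1}(S^1,\mathbb{C})$: periodicity of $\gamma$ forces the nonlinear constraint $\int_{S^1}e^{\xi(s)}\,ds=0$, so the image is a real-codimension-two subset $Z$ of the Hilbert space, not an open set, and you do not get a linear chart for free. To conclude that $\Arc^m(S^1)$, which corresponds to $Z\cap\{\Re\xi=0\}$, is a submanifold of (the space charted by) $Z$, you must check that the closed subspace of purely imaginary functions is transverse to $Z$ along the intersection; concretely, that for every $c\in\mathbb{C}$ there is a real $\omega\in H^{m-1}(S^1,\mathbb{R})$ with $\int_{S^1} i e^{i\theta(s)}\omega(s)\,ds=c$, i.e., that the unit vectors $\gamma'(s)^{\perp}$ span $\mathbb{R}^2$ as $s$ ranges over $S^1$. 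This holds because a continuous unit-speed $\gamma'$ with $\int_{S^1}\gamma'\,ds=0$ cannot be constant --- which is precisely the paper's step ``there are no closed geodesics in $\mathbb{R}^2$,'' appearing there as the choice of constants $a,b$ that makes the candidate preimage $u$ periodic. Your assertion that the closure constraint ``does not affect the local product structure of the chart'' and ``imposes no additional condition on the fibre direction'' is exactly the statement that needs proof; without this transversality computation the argument is incomplete, and on a general target manifold containing a closed geodesic of length $1$ the analogous statement is actually false, as the paper's footnote points out.
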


\begin{proof}
Define $\mathcal{J}\colon \Curve^m \to \Func^{m-1}$ by the formula $\mathcal{J}(\eta)(s) = \lvert \eta'(s)\rvert^2$. Since $H^{m-1}$ functions are closed under multiplication for $m\ge 2$, it is easy to see that $\mathcal{J}$ is well-defined and $C^{\infty}$. The differential is easy to compute: we have 
$ (D\mathcal{J})_{\eta}(u) = 2\langle u', \eta'\rangle.$ Hence for any $\gamma\in \Arc^m(S^1)$ we can prove it is surjective; let $f\in \Func^{m-1}$ be any real-valued function. Write $\gamma'(s) = \phi(s)\partial_x + \psi(s) \partial_y$ for $H^{m-1}$ functions $\phi$ and $\psi$ satisfying $\phi^2+\psi^2\equiv 1$, and define $u$ so that $u' = (f\phi - g\psi) \,\partial_x + (f\psi + g\phi) \,\partial_y$, where $g=a\phi + b\psi$ and $a$ and $b$ are constants chosen so that $\int_0^1 u'(s) \, ds = 0$ (which is necessary for $u$ to be periodic). It is easy to see that such constants can always be chosen as long as $\phi$ and $\psi$ are not constant, and the only way that could happen is if $\gamma$ were a geodesic. But there are no closed geodesics in $\mathbb{R}^2$.\footnote{The same construction works for the arc space on any Riemannian manifold $M$, and $\Arc^m(S^1) = \{\gamma\colon S^1\to M : \lvert \gamma'\rvert \equiv 1\}$ fails to be a manifold if $M$ contains any closed geodesic of length $1$.} Clearly $u$ constructed this way is in $H^m$, and so $D\mathcal{J}$ is surjective at any $\gamma\in \Arc^m(S^1)$. Hence if $\mathbb{1}$ is the constant function $1$ on $S^1$, then $\mathcal{J}^{-1}(\mathbb{1})=\Arc^m(S^1)$ is a Hilbert submanifold.
\end{proof}

The tangent space $T_{\gamma}\Arc^m$ still consists of $H^m$ vector fields $u$  with $\langle u', \gamma'\rangle\equiv 0$, so that the (formal) orthogonal complement is still $$(T_{\gamma}\Arc^m)^{\perp} = \left\{ \frac{d}{ds}(\sigma \gamma') : \sigma\in \Func^{m+1}\right\}.$$ (Note that as in Proposition \ref{orthoprojprop}, this only makes sense in $T_{\gamma}\Curve^m$ if $\gamma\in \Arc^{m+2}$.) So the orthogonal projection is 
$P_{\gamma}(z) = z - (\sigma \gamma')',$ where 
\begin{equation}\label{sigmacircle}
\sigma'' - \lvert \gamma''\rvert^2 \sigma = \langle z', \gamma'\rangle, \qquad \sigma(0)=\sigma(1), \; \sigma'(0)=\sigma'(1).
\end{equation}
To prove equation \eqref{sigmacircle} always has a solution, we construct the Green function for it.

\begin{proposition}\label{greencircle}
Suppose $\gamma$ and $z$ are smooth. If $\lvert \gamma''\rvert$ is not identically zero, then equation \eqref{sigmacircle} has a unique solution $\sigma$.
\end{proposition}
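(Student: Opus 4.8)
The plan is to treat \eqref{sigmacircle} as a linear second-order boundary value problem, reduce its unique solvability to the triviality of the associated homogeneous periodic problem, and then realize the solution operator as integration against a Green function, exactly in the spirit of Theorem \ref{greenthm}. Write $q = \lvert \gamma''\rvert^2 \ge 0$ and $L\sigma = \sigma'' - q\sigma$. Since $\gamma$ and $z$ are smooth, both $q$ and the right-hand side $h = \langle z', \gamma'\rangle$ are smooth, so any solution is automatically smooth; the real content is existence and uniqueness. Because $L$ is a second-order linear operator, the Fredholm alternative for the periodic problem asserts that \eqref{sigmacircle} has a unique solution for every $h$ if and only if the homogeneous problem $L\sigma = 0$ with $\sigma(0)=\sigma(1)$, $\sigma'(0)=\sigma'(1)$ admits only $\sigma \equiv 0$.

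First I would dispose of the homogeneous problem by an energy identity. Multiplying $L\sigma = 0$ by $\sigma$ and integrating over $[0,1]$, one integration by parts gives $\big[\sigma\sigma'\big]_0^1 - \int_0^1 (\sigma')^2\,ds - \int_0^1 q\,\sigma^2\,ds = 0$. The boundary term vanishes because $\sigma(0)=\sigma(1)$ and $\sigma'(0)=\sigma'(1)$, leaving $\int_0^1 (\sigma')^2\,ds + \int_0^1 q\,\sigma^2\,ds = 0$. Both integrands are nonnegative, so $\sigma' \equiv 0$, hence $\sigma$ is constant; the surviving term then reads $\sigma^2 \int_0^1 q\,ds = 0$. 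This is exactly where the hypothesis enters: since $q = \lvert \gamma''\rvert^2 \not\equiv 0$ we have $\int_0^1 q\,ds > 0$, forcing $\sigma \equiv 0$. Geometrically, the excluded case $\lvert \gamma''\rvert \equiv 0$ is a straight segment, which as noted after Theorem \ref{circsubmfd} cannot occur for a closed curve; there the constants would be nontrivial periodic solutions and solvability would genuinely fail.

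With the homogeneous problem ruled out, I would construct the Green function explicitly. Choosing a fundamental system $\{\sigma_1, \sigma_2\}$ for $L\sigma = 0$, I seek $G(\cdot,x)$, for each fixed $x$, of the form $a_{\pm}\sigma_1 + b_{\pm}\sigma_2$ on $\{s<x\}$ and $\{s>x\}$, subject to continuity at $s=x$, the unit jump $\partial_s G(x^+,x) - \partial_s G(x^-,x) = -1$, and the two periodicity conditions $G(0,x)=G(1,x)$, $\partial_s G(0,x)=\partial_s G(1,x)$. Matching these conditions is a linear system whose coefficient determinant is precisely the Wronskian-type quantity that vanishes if and only if a nontrivial periodic homogeneous solution exists. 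By the previous paragraph this determinant is nonzero, so $G$ exists and is unique, and the solution of \eqref{sigmacircle} is recovered by integrating $G$ against $\langle z', \gamma'\rangle$ over $[0,1]$, exactly as in \eqref{ODEsolgreen}.

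I expect no deep obstacle. The only step requiring genuine care is the triviality of the homogeneous periodic problem, and there the sole subtlety is making sure the hypothesis $\lvert \gamma''\rvert \not\equiv 0$ is actually invoked to kill the constant mode; without it the energy estimate yields only that $\sigma$ is constant, not that it vanishes. The matching-determinant bookkeeping in the Green function construction is routine once its nonvanishing is known, and smoothness of the resulting $\sigma$ is immediate from smoothness of $q$ and $h$.
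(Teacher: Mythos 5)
Your proof is correct, but it reaches the key nondegeneracy fact by a different route than the paper. You invoke the Fredholm alternative for the periodic problem and kill the homogeneous solutions with an energy identity: integrating $\sigma(\sigma''-q\sigma)=0$ by parts over the circle forces $\sigma'\equiv 0$ and then $\sigma^2\int_0^1 q\,ds=0$, so $q=\lvert\gamma''\rvert^2\not\equiv 0$ gives $\sigma\equiv 0$; the nonvanishing of the matching determinant in the Green function construction is then inferred abstractly. The paper instead computes that determinant \emph{explicitly}: after translating by $x$ it reduces to the scalar problem $\varphi''=\kappa^2\varphi$ with $\varphi(0)=\varphi(1)$, $\varphi'(0)+1=\varphi'(1)$, picks the fundamental solutions $\varphi_1,\varphi_2$ with standard initial data, uses the reduction-of-order identity $\varphi_2(s)=\varphi_1(s)\int_0^s dr/\varphi_1(r)^2$ to evaluate the determinant as $-[\varphi_1(1)-1]^2/\varphi_1(1)-\varphi_1'(1)\int_0^1 dx/\varphi_1(x)^2$, and observes this is strictly negative unless $\kappa\equiv 0$; only uniqueness is handled by the energy argument there. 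Your version is shorter and more conceptual for pure existence and uniqueness, and it makes the role of the hypothesis $\lvert\gamma''\rvert\not\equiv 0$ transparent (it exactly kills the constant mode). What the paper's explicit computation buys is the closed-form expression $\varphi(0)=A=L\varphi_1(1)/\bigl(L\varphi_1'(1)+[\varphi_1(1)-1]^2/\varphi_1(1)\bigr)$ with $L=\int_0^1 dx/\varphi_1(x)^2$, which is precisely what Proposition \ref{greencirclebounds} needs to derive the quantitative upper and lower bounds on the Green function; an abstract Fredholm argument would have to be supplemented by that computation anyway before the next proposition.
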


\begin{proof}
Let $G\colon S^1\times S^1\to \mathbb{R}$ denote the Green function, satisfying 
$$ G_{ss}(s,x) - \lvert \gamma''(s)\rvert^2 G(s,x) = -\delta(s-x), \quad G(0,x)=G(1,x), G_s(0,x)=G_s(1,x).$$
If we can find the Green function, then the solution of \eqref{sigmacircle} is given by 
$ \sigma(s) = -\int_0^1 G(s,x) \langle z'(x), \gamma'(x)\rangle \, dx$. 

Translating by $x$, we easily see that $G(s,x) = \varphi(s)$ where 
\begin{equation}\label{varphi}
\varphi''(s) = \kappa(s)^2 \varphi(s), \qquad \varphi(0)=\varphi(1), \quad \varphi'(0)+1 = \varphi'(1),
\end{equation}
with $\kappa(s) = \lvert \gamma''(s+x)\rvert$. So we just need to prove that the boundary value problem \eqref{varphi} has a solution if $\kappa$ is not identically zero. 

Let $\varphi_1$ and $\varphi_2$ denote the solutions of \eqref{varphi} with boundary conditions $\varphi_1(0)=1$, $\varphi_1'(0)=0$, $\varphi_2(0)=0$, and $\varphi_2'(0)=1$. Clearly $\varphi_1(s)\ge 1$ and $\varphi_2'(s)\ge 1$ for all $s$. We can write $\varphi = A\varphi_1 + B\varphi_2$ where $A$ and $B$ 
satisfy 
\begin{equation}\label{ABdet}
A[ \sigma_1(1)-1] + B\sigma_2(1) = 0 \quad \text{and} \quad A\sigma_1'(1) + B[\sigma_2'(1)-1] = 1.
\end{equation}
Using the reduction of order trick, we can compute that $\sigma_2(s) = \sigma_1(s) \int_0^s dr/\sigma_1(r)^2$, so that the determinant of the system \eqref{ABdet} is 
$$ -\frac{1}{\sigma_1(1)} [\sigma_1(1)-1]^2 - \sigma_1'(1) \int_0^1 dx/\sigma_1(x)^2,$$
which is always negative unless $\sigma_1(1)=1$ and $\sigma_1'(1)=0$ (which happens if and only if $\kappa\equiv 0$). Hence we can solve for $A$ and $B$, and so we obtain the Green function.

Uniqueness is trivial using a standard energy argument.
\end{proof}

Now we obtain upper and lower bounds for $\sigma$. 

\begin{proposition}\label{greencirclebounds}
The solution $\varphi$ of \eqref{varphi} satisfies
\begin{equation}
e^{-\varrho/2}/\varrho \le \varphi(s) \le 1 + \frac{1}{4\pi^2}, \quad \text{where} \quad \varrho = \int_0^1 \kappa^2(x)\,dx,
\end{equation}
for all $s\in S^1$. 
\end{proposition}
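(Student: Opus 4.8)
The plan is to establish the two bounds separately, using the boundary-value problem \eqref{varphi} together with the representation of $\varphi$ in terms of the fundamental solutions $\varphi_1, \varphi_2$ from the previous proposition. Throughout I set $\varrho = \int_0^1 \kappa^2(x)\,dx$. The key structural facts I would exploit are that $\varphi$ is periodic with a jump of $+1$ in its derivative across $s=0$ (by \eqref{varphi}), that $\varphi$ satisfies the convex ODE $\varphi'' = \kappa^2 \varphi$, and that $\varphi > 0$ everywhere (which follows from the positivity of the Green function, or can be argued directly from the equation).

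For the \emph{lower} bound $\varphi(s) \ge e^{-\varrho/2}/\varrho$, I would first integrate the equation $\varphi'' = \kappa^2 \varphi$ over one period and use periodicity of $\varphi$ together with the derivative jump condition $\varphi'(1) = \varphi'(0) + 1$ to obtain a normalizing identity of the form $\int_0^1 \kappa(x)^2 \varphi(x)\,dx = 1$ (the $-1$ from the jump balances the integral). This identity, combined with a lower bound on how much $\varphi$ can oscillate, pins down the scale of $\varphi$. Specifically, writing $\varphi(s) = \varphi(s_0)\,\psi(s)$ where $s_0$ is a point where $\varphi$ attains its minimum, I would control the ratio $\varphi(s)/\varphi(s_0)$ from above using a Gronwall-type estimate on the equation $\varphi'' = \kappa^2\varphi$; the exponential factor $e^{\varrho/2}$ comes out naturally from integrating the logarithmic derivative against $\kappa^2$. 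Combining the normalization $\int_0^1 \kappa^2\varphi = 1$ with the upper control $\varphi(x) \le \varphi(s_0)e^{\varrho/2}$ then forces $\varphi(s_0) \ge e^{-\varrho/2}/\varrho$, which is the desired minimum bound.

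For the \emph{upper} bound $\varphi(s) \le 1 + \frac{1}{4\pi^2}$, the mechanism is different and should be handled by comparison with the free case $\kappa \equiv 0$. When $\kappa \equiv 0$ the problem \eqref{varphi} becomes $\varphi'' = 0$ with the same boundary/jump conditions, whose solution is the linear (sawtooth) profile; its maximum is a universal constant. Since $\kappa^2 \ge 0$ makes the equation more strongly convex, I expect a maximum principle / comparison argument to show that turning on $\kappa$ can only \emph{decrease} the peak value of $\varphi$, so the $\kappa \equiv 0$ value is the worst case. The constant $1 + \frac{1}{4\pi^2}$ strongly suggests that the cleanest route is actually through Fourier series on $S^1$: expanding $\varphi$ and the delta source in the basis $e^{2\pi i n s}$, the free Green function for $-\partial_s^2$ (plus the appropriate zero-mode handling from the $\kappa^2$ term) has Fourier coefficients $1/(4\pi^2 n^2)$, and summing $\sum_{n\ne 0} 1/(4\pi^2 n^2)$ with the $n=0$ contribution yields exactly $1 + \frac{1}{4\pi^2}$ as the supremum; the $\kappa^2\varphi$ term only improves the bound.

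The main obstacle I anticipate is the lower bound, specifically making the interplay between the normalization identity $\int_0^1 \kappa^2\varphi = 1$ and the oscillation control fully rigorous when $\kappa^2$ is concentrated or degenerate. The Gronwall estimate controlling $\varphi(x)/\varphi(s_0)$ must be uniform and must produce precisely the factor $e^{\varrho/2}/\varrho$ rather than a worse constant, and verifying that the $\varrho$ in the denominator (rather than, say, $e^{\varrho}$) is correct will require care in how the convexity of $\varphi$ is used. The upper bound, by contrast, I expect to be routine once the Fourier/comparison framework is set up.
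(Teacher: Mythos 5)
Your lower-bound skeleton is essentially the paper's: the normalization $\int_0^1 \kappa^2\varphi\,dx = \varphi'(1)-\varphi'(0) = 1$ combined with a Harnack-type oscillation estimate does yield $\varphi_{\min}\ge e^{-\varrho/2}/\varrho$ (the paper instead bounds $1/\varphi(0)=\int_0^1\kappa^2\,dx-\int_0^1((\ln\varphi)')^2\,dx\le\varrho$ directly, but the two routes are interchangeable). The step you flag as delicate is indeed where the work lies: a one-sided Gronwall estimate on $(\ln\varphi)''\le\kappa^2$ starting from the minimum only gives $\max\varphi\le(\min\varphi)\,e^{\varrho}$. To obtain the exponent $\varrho/2$ you must integrate from \emph{both} endpoints $0$ and $1$ toward the interior minimum $u$ and average the two resulting inequalities, using the periodicity $\varphi(0)=\varphi(1)$; the averaging turns the weights $x$ and $1-x$ into $1/2$. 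Without that observation you prove only a strictly weaker constant.

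The upper bound is where your proposal genuinely fails. The comparison with the free case $\kappa\equiv 0$ cannot work: for $\kappa\equiv 0$ the problem \eqref{varphi} has \emph{no} solution (a linear periodic function cannot have a jump in its derivative), which is precisely the degeneracy excluded in Proposition \ref{greencircle}, and in the Fourier picture the zero mode of $-\varphi''=\delta$ is inconsistent since $\delta$ has nonzero mean. Worse, the monotonicity runs opposite to what you assert near $\kappa=0$: the identity $1/\varphi(0)\le\varrho$ from your own lower-bound argument forces $\varphi(0)\to\infty$ as $\varrho\to 0$, so small curvature makes the peak \emph{large}, and no universal upper bound for the ODE alone exists. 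The constant $1+\tfrac{1}{4\pi^2}$ is not an analytic fact about \eqref{varphi}; it requires the geometric input that $\kappa$ is the curvature of a closed plane curve of length $1$, whence $\int_0^1\kappa\,ds\ge 2\pi$ by Fenchel's theorem and $\varrho\ge\bigl(\int_0^1\kappa\,ds\bigr)^2\ge 4\pi^2$ by Cauchy--Schwarz. The paper then notes that the maximum of the convex function $\varphi$ is $\varphi(0)=A$, bounds $A\le\varphi_1(1)/\varphi_1'(1)\le 1+1/\varphi_1'(1)$ using convexity of $\varphi_1$, and finishes with $\varphi_1'(1)=\int_0^1\varphi_1\kappa^2\,ds\ge\varrho\ge 4\pi^2$. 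Your plan is missing this entire ingredient.
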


\begin{proof}
To prove the lower bound, set $\gamma=\ln{\varphi}$, so that $\gamma''=\kappa^2-\gamma'^2$. Let $u$ be a point where $\gamma'=0$; since $\gamma$ is convex, $\gamma(u)$ is the minimum of $\gamma$. Integrating by parts twice from $0$ to $u$ we get 
$ \gamma(u)\ge \gamma(0) - \int_0^1 x \kappa^2(x)\,dx,$
and similarly integrating from $u$ to $1$ we get $\gamma(u)\ge \gamma(1) - \int_0^1 (1-x) \kappa^2(x)\,dx$. Since $\gamma(0)=\gamma(1)$, averaging these estimates gives 
$$
\gamma(u) \ge \gamma(0) - \tfrac{1}{2} \int_0^1 \kappa^2(x) \, dx.
$$
Thus we get $\varphi(u) \ge \varphi(0) e^{-\varrho/2}$. 

Now we need a lower bound for $\varphi(0)$. Since $\varphi'(1)=\varphi'(0)+1$ and $\varphi(0)=\varphi(1)$, we have $\gamma'(1)=\gamma'(0)+1/\varphi(0)$, so that 
$$ \frac{1}{\varphi(0)} = \gamma'(1)-\gamma'(0) = \int_0^1 \kappa^2(s) \, ds - \int_0^1 \gamma'(s)^2 \, ds \le \varrho.$$
Combining this with our estimate for $\varphi(u)$ gives the lower bound $\varphi(u) \ge e^{-\varrho/2}/\varrho$. 

The upper bound is obtained differently. Writing $\varphi=A\varphi_1+B\varphi_2$ as in Proposition \ref{greencircle}, we get 
$$\varphi(0)=A = \frac{L\varphi_1(1)}{L\varphi'(1) + [\varphi_1(1)-1/\varphi_1(1)]^2},$$
where $L=\int_0^1 dx/\varphi_1(x)^2$. Hence $A\le \varphi_1(1)/\varphi_1'(1)$. 
Now since $\varphi_1$ is convex, we know $\varphi_1'(1)\ge \varphi_1(1)-1$, which implies $A\le 1+1/\varphi'(1)$. 
Finally, we use the fact that $\varphi_1(s)\ge 1$ for all $s$ to get 
$$ \varphi_1'(1)= \int_0^1 \varphi_1(s) \kappa^2(s) \, ds \ge \int_0^1 \kappa(s)^2 \, ds \ge \left( \int_0^1 \lvert \gamma''(s+x)\rvert \, ds\right)^2 \ge 4\pi^2,$$
using the well-known bound on the total curvature of a closed curve in the plane (see for example
do Carmo~\cite{docarmo}, Section 5-7).
\end{proof}

As a corollary, we get a bound for the curvature in exactly the same way as Theorem \ref{curvaturethm}.

\begin{proposition}\label{circlecurvature}
The sectional curvature of $\Arc(S^1)$ in the $L^2$ metric is bounded below at $\gamma\in \Arc(S^1)$ 
by $ 4\pi^2 e^{-\varrho/2}/\varrho$, where $\varrho=\int_{S^1} \kappa(s)^2 \, ds$, and is unbounded above.
\end{proposition}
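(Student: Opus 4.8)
The plan is to mirror the structure of the proof of Theorem \ref{curvaturethm} exactly, substituting the periodic Green function bounds from Proposition \ref{greencirclebounds} for the interval bounds from Theorem \ref{greenthm}. First I would recall that since $\Arc(S^1)$ is a submanifold of the flat space $\Curve(S^1)$ under the $L^2$ metric \eqref{curvesmetric}, the Gauss-Codazzi formula gives the same expression for the sectional curvature as in \eqref{sectional}, namely a ratio whose numerator is $\llangle S(u,u), S(v,v)\rrangle - \llangle S(u,v), S(u,v)\rrangle$ and whose denominator is $\llangle u,u\rrangle\llangle v,v\rrangle - \llangle u,v\rrangle^2$. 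The second fundamental form is computed exactly as in Lemma \ref{secondfundy}, now using the periodic Green function of Proposition \ref{greencircle}, so that $\sigma_{uv}(s) = \int_{S^1} G(s,x)\langle u'(x), v'(x)\rangle\, dx$. The symmetry $G(s,x)=G(x,s)$ (here $G(s,x)=\varphi(s-x)$ with $\varphi$ even) lets me rewrite the numerator in the symmetrized form
\begin{equation*}
\frac{1}{2}\int_{S^1}\int_{S^1} G(s,x) M(s,x)\,ds\,dx,
\end{equation*}
where $M(s,x) = \lvert u'(s)\rvert^2\lvert v'(x)\rvert^2 + \lvert u'(x)\rvert^2\lvert v'(s)\rvert^2 - 2\langle u'(s),v'(s)\rangle\langle u'(x),v'(x)\rangle \ge 0$ by Cauchy-Schwarz.

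Next I would insert the lower bound $G(s,x)=\varphi(s-x)\ge e^{-\varrho/2}/\varrho$ from Proposition \ref{greencirclebounds}. Since $G$ is bounded below by a positive constant rather than by a product $(1-\lvert s\rvert)(1-\lvert x\rvert)$ as on the interval, the argument is actually slightly cleaner here: I pull the constant out and am left with $\frac{e^{-\varrho/2}}{2\varrho}\int\int M(s,x)\,ds\,dx$ over the numerator, which equals $\frac{e^{-\varrho/2}}{\varrho}\big(\lVert u'\rVert_{L^2}^2\lVert v'\rVert_{L^2}^2 - \llangle u',v'\rrangle_{L^2}^2\big)$. Dividing by the denominator $\lVert u\rVert_{L^2}^2\lVert v\rVert_{L^2}^2 - \llangle u,v\rrangle_{L^2}^2$ gives
\begin{equation*}
K \ge \frac{e^{-\varrho/2}}{\varrho}\,\frac{\lVert u'\rVert_{L^2}^2\lVert v'\rVert_{L^2}^2 - \llangle u',v'\rrangle_{L^2}^2}{\lVert u\rVert_{L^2}^2\lVert v\rVert_{L^2}^2 - \llangle u,v\rrangle_{L^2}^2},
\end{equation*}
the periodic analogue of \eqref{sectionallower}. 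It remains to bound the Rayleigh-type quotient of $\dot{H}^1$ over $L^2$ norms from below.

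For that last step I would expand $u$ and $v$ in a Fourier basis on $S^1$ rather than in Legendre polynomials. Writing $u(s)=\sum_n u_n e^{2\pi i n s}$ and similarly for $v$, the operator $-\partial_s^2$ has eigenvalues $\lambda_n = 4\pi^2 n^2$, and the quotient becomes $\frac{\sum_{n,k}\lambda_n\lambda_k(\lvert u_n\rvert^2\lvert v_k\rvert^2 - \cdots)}{\sum_{n,k}(\lvert u_n\rvert^2\lvert v_k\rvert^2 - \cdots)}$, exactly as in Theorem \ref{curvaturethm}. The one genuine subtlety, and the step I expect to be the main obstacle, is the zero mode: the constant mode $n=0$ has eigenvalue $0$, so naively $\lambda_1$ could be taken as $0$ and the bound would collapse. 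However, periodicity forces $\int_{S^1}\gamma'(s)\,ds = 0$ and, more to the point, vector fields in $T_\gamma\Arc(S^1)$ that contribute to the curvature are differentiated, so only the nonconstant part of $u$ and $v$ enters the numerator through $u', v'$; any constant component is annihilated and the relevant minimizing eigenvalue is $\lambda_1 = 4\pi^2$. I would therefore argue that the quotient is bounded below by the smallest nonzero eigenvalue $4\pi^2$ (with the standard subtlety that if $u$ or $v$ is itself constant the numerator of the curvature expression vanishes and $K$ is trivially nonnegative), yielding $K \ge 4\pi^2 e^{-\varrho/2}/\varrho$. Unboundedness above follows just as in Theorem \ref{curvaturethm}, by choosing $u$ and $v$ supported in high Fourier modes so that the ratio of $\dot{H}^1$ to $L^2$ norms is arbitrarily large, since $G$ is still bounded below away from zero.
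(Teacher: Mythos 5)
Your overall strategy is exactly the paper's: the published proof consists essentially of the single remark that the argument is the same as in Theorem \ref{curvaturethm}, with the factor $4\pi^2$ coming from the smallest eigenvalue of the derivative operator. Your Gauss--Codazzi setup, the symmetrization of the numerator, the insertion of the constant lower bound $\varphi\ge e^{-\varrho/2}/\varrho$ from Proposition \ref{greencirclebounds}, and the Fourier expansion in place of Legendre polynomials are all the intended steps. (A minor remark: since the numerator carries a \emph{product} $\lambda_n\lambda_m$ of eigenvalues, the literal analogue of the computation in Theorem \ref{curvaturethm} gives the \emph{square} $(4\pi^2)^2=16\pi^4$ of the smallest nonzero eigenvalue for mean-zero fields; your weaker claim of $4\pi^2$ still suffices for the stated bound.)

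However, the step you yourself flagged as the main obstacle --- the zero mode --- is not resolved by your argument, and this is a genuine gap. On the circle the constant vector fields $u\equiv c$ belong to $T_\gamma\Arc(S^1)$ (the condition $\langle u',\gamma'\rangle\equiv0$ is vacuous for them); they are the infinitesimal translations. For such $u$ one has $\sigma_{uv}\equiv0$, so the Gauss--Codazzi numerator vanishes while the denominator $\lVert u\rVert^2\lVert v\rVert^2-\llangle u,v\rrangle^2$ is strictly positive for $v$ not parallel to $u$; hence $K(u,v)=0$ on those sections and no uniform positive lower bound can hold there. Your proposed fix --- that the constant component is ``annihilated'' so the relevant eigenvalue is still $4\pi^2$ --- goes the wrong way: projecting off the constants shrinks the numerator but the denominator retains the zero-mode contribution (orthogonal projection only decreases Gram determinants), so the Rayleigh-type quotient is \emph{not} bounded below by the smallest nonzero eigenvalue and can be made arbitrarily small by taking $u$ close to a constant. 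Your own parenthetical (``$K$ is trivially nonnegative'' when $u$ is constant) concedes exactly that the claimed bound fails in that case. The correct repair is to restrict to sections spanned by mean-zero fields, equivalently to pass to the quotient of $\Arc(S^1)$ by translations (the translation orbits are flat and totally geodesic, so the quotient curvature dominates the horizontal curvature); on the mean-zero subspace your eigenvalue argument goes through verbatim and even yields $16\pi^4$. This issue is invisible in Theorem \ref{curvaturethm} because oddness of the curves excludes constant fields, which is precisely why the paper's ``same proof'' remark passes over it silently.
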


\begin{proof}
The proof is the same as Theorem \ref{curvaturethm}. The factor $4\pi^2$ comes from the fact that the smallest eigenvalue of the derivative operator has size $2\pi$.
\end{proof}

We can prove using techniques similar to \cite{whipsandchains} the analogue of Theorem \ref{maincompanion}: that solutions of the geodesic equation exist as long as $\eta(0)\in \Arc^3(S^1)$ and $\dot{\eta}(0)\in T_{\gamma}\Arc^2(S^1)$; the weakening comes from the fact that we never need to use the estimates of Lemma \ref{weightedestimateslemma} to fix the weighting at the endpoints.

Unboundedness of the curvature again implies that the exponential map cannot be $C^1$; this can be checked explicitly using simple explicit solutions. The simplest explicit solution of the geodesic equation on $S^1$ is 
\begin{equation}\label{explicitcirclegeodesic}
\eta(t,s) = \gamma(s+\omega t), \qquad \omega(t,s) = \omega^2,
\end{equation}
where $\gamma\colon S^1\to\mathbb{R}^2$ is any closed curve. We can compute Jacobi fields along such a curve explicitly (for example, if $\gamma$ is a circle) and show that they have zeroes for arbitrarily short times, as in Theorem \ref{notc1}.

One might object to the notion that unphysical solutions such as \eqref{explicitcirclegeodesic} should be allowed, especially in application to shape recognition, since the image of the curve doesn't change with time. The typical way to resolve this (see for example \cite{younes}) is to quotient out by the translations. The metric on $\Arc(S^1)$ is obviously invariant under the action by translations, so we get a metric on the quotient $\Arc(S^1)/S^1$. Since \eqref{explicitcirclegeodesic} is always a geodesic, this is a Riemannian submersion with totally geodesic fibers. This corresponds to requiring tangent vectors to satisfy not only $\langle u', \gamma'\rangle \equiv 0$ but also $\int_{S^1} \langle u(s), \gamma'(s)\rangle \, ds = 0$. The new orthogonal space $T_{\gamma}(\Arc(S^1)/S^1)$ then consists of fields of the form $\frac{d}{ds} (\sigma \gamma') + c \gamma'$ for functions $\sigma$ and constants $c$. The geodesic equation is then 
$$ \eta_{tt} = \partial_s(\sigma \eta_s) + c\eta_s, \qquad \sigma_{ss}-\lvert \eta_{ss}\rvert^2 \sigma = -\lvert \eta_{st}\rvert^2,$$
and we have $\frac{d}{dt} \int_{S^1} \langle \eta_t, \eta_s \rangle \, ds = c$, so that $c=0$ in order to preserve horizontality. Hence the same geodesic equation guarantees that the translations disappear as long as $\int_{S^1} \langle \eta_t, \eta_s\rangle \, ds=0$ initially.

\section{Removing the length constraint}\label{nolengthappendix}

We note that one drawback to these equations in shape analysis is that the space of all odd curves (modulo reparametrizations) is not exactly the same as the arc-length parametrized curves, since the space of all curves includes those of arbitrary length while ours consists only of curves of length $2$. To extend this, we would have to work with a slightly different version of $\Arc$. Let 
$$\overline{\Arc} = \left\{\gamma\colon [-1,1]\to \mathbb{R}^2 : \lvert \gamma'\rvert \equiv \text{const} \text{ and } \gamma(-s)=-\gamma(s) \, \forall s\right\}.$$ 

\begin{theorem}
The geodesic equation on $\overline{\Arc}$ in the weak metric \eqref{curvesmetric} is given by 
\begin{align}
\eta_{tt} &= \partial_s(\sigma \eta_s), \qquad \lvert \eta_s\rvert^2 = \ell^2, \label{stringevolloose} \\
\ell^2 \sigma_{ss} - \lvert \eta_{ss}\rvert^2 + \lvert \eta_{st}\rvert^2 &= C, \qquad \int_{-1}^1 \sigma \, ds = 0,\qquad \sigma(-1)=\sigma(1)=0. \label{stringconstraintloose}
\end{align}
Equation \eqref{stringconstraintloose} always has a solution for any given $\eta$. 
Here $\ell$ and $C$ are constant in space but not necessarily in time, and we have $\frac{d^2}{dt^2}(\ell^2) = 2C$. 
\end{theorem}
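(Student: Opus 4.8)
The plan is to derive the geodesic equations \eqref{stringevolloose}--\eqref{stringconstraintloose} by computing the orthogonal projection onto $T_\gamma\overline{\Arc}$ in the weak metric \eqref{curvesmetric}, exactly as in Proposition \ref{orthoprojprop} and Lemma \ref{secondfundy}, but now accounting for the extra degree of freedom coming from the unconstrained length. First I would identify the tangent space: differentiating the constraint $\lvert\gamma'\rvert^2\equiv\ell^2$ (with $\ell$ possibly varying) shows that $u\in T_\gamma\overline{\Arc}$ iff $\langle u',\gamma'\rangle$ is constant in $s$ (not necessarily zero, as it was for $\Arc$). Correspondingly the normal space is enlarged: a vector field $v=(\sigma\gamma')'$ is orthogonal to $T_\gamma\overline{\Arc}$ provided $\sigma(\pm1)=0$ \emph{and} $\int_{-1}^1\sigma\,ds=0$, the latter being the new condition that kills pairing against the additional tangent direction $u$ with $\langle u',\gamma'\rangle\equiv\text{const}\ne0$. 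This explains the side condition $\int_{-1}^1\sigma\,ds=0$ appearing in \eqref{stringconstraintloose}.

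Next I would run the second-fundamental-form computation. Setting $v=\dot\eta$ and using flatness of $\Curve^m$, the geodesic condition $D^2\eta/dt^2=S(\dot\eta,\dot\eta)$ forces $\eta_{tt}=\partial_s(\sigma\eta_s)$ for some function $\sigma$ vanishing at the endpoints with zero mean; this is the first equation in \eqref{stringevolloose}. To get the tension equation, I would differentiate the constraint $\lvert\eta_s\rvert^2=\ell^2(t)$ twice in $t$. One time derivative gives $\langle\eta_{st},\eta_s\rangle=\ell\ell_t$, which is $s$-independent; a second derivative, combined with $\eta_{tt}=\partial_s(\sigma\eta_s)$ and the identity $\langle\partial_s(\sigma\eta_s)_s,\eta_s\rangle=\ell^2\sigma_{ss}-\lvert\eta_{ss}\rvert^2\sigma$ (after expanding and using $\langle\eta_s,\eta_{ss}\rangle=0$), yields $\ell^2\sigma_{ss}-\lvert\eta_{ss}\rvert^2\sigma+\lvert\eta_{st}\rvert^2=C(t)$, where the constant $C$ absorbs the $s$-independent term $\tfrac12\partial_t^2(\ell^2)$. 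Matching that bookkeeping gives the stated relation $\tfrac{d^2}{dt^2}(\ell^2)=2C$.

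The remaining analytic point is to show \eqref{stringconstraintloose} always has a solution for given $\eta$. Here $C$ is \emph{not} free: it is the unknown constant that must be chosen so that the boundary value problem $\ell^2\sigma''-\lvert\eta_{ss}\rvert^2\sigma=C-\lvert\eta_{st}\rvert^2$ with $\sigma(\pm1)=0$ admits a solution \emph{also} satisfying $\int_{-1}^1\sigma\,ds=0$. I would argue this via the Green function $G$ of Theorem \ref{greenthm}: for each candidate right-hand side the Dirichlet problem has a unique solution $\sigma=\sigma(C)$ depending affinely on $C$, and the map $C\mapsto\int_{-1}^1\sigma(C)\,ds$ is affine with a nonzero slope $\int\!\int G(s,x)\,ds\,dx>0$ (strict positivity of $G$ off the boundary, again from Theorem \ref{greenthm}). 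Hence there is a unique $C$ making the mean vanish, which simultaneously determines $\sigma$ and $C$.

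The main obstacle I expect is precisely this solvability-cum-normalization step: one must verify that the two scalar side conditions ($\int\sigma=0$ and the endpoint conditions) are exactly compatible with the one extra unknown constant $C$, so that the system is neither over- nor under-determined. The key is that enlarging the configuration space from $\Arc$ to $\overline{\Arc}$ adds exactly one tangent direction and hence exactly one normalization constraint on $\sigma$, which is balanced by promoting the tension equation's right-hand side constant from $0$ to the free parameter $C$. Once this Fredholm-type bookkeeping is made precise via the strict positivity of $G$, the remaining identities are routine differentiations of the length constraint.
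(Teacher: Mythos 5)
Your proposal follows essentially the same route as the paper: the same identification of the tangent space ($\langle u',\gamma'\rangle$ constant in $s$) and of the normal fields $(\sigma\gamma')'$ with $\int_{-1}^1\sigma\,ds=0$, and the same Green-function argument using strict positivity of $G$ off the boundary to solve for the unique constant $C$. The only cosmetic difference is that you obtain the tension equation by differentiating $\lvert\eta_s\rvert^2=\ell^2(t)$ twice in $t$, which identifies $C=\tfrac12\tfrac{d^2}{dt^2}(\ell^2)$ directly, whereas the paper differentiates $\partial_s\lvert\eta_s\rvert^2=0$ and introduces $C$ as a constant of $s$-integration; your version also correctly restores the factor of $\sigma$ on $\lvert\eta_{ss}\rvert^2$ that is dropped in the paper's displayed equation.
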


\begin{proof}
The condition for a vector field $u$ along $\gamma\in\overline{\Arc}$ to be in $T_{\gamma}\overline{\Arc}$ is that $\frac{d}{ds} \langle u', \gamma'\rangle \equiv 0$, so that the orthogonal complement consists of fields of the form $\frac{d}{ds} (\sigma \gamma')$ for functions $\sigma$ with $\int_{-1}^1 \sigma(s) \, ds = 0$. The geodesic equation therefore still has the form of the wave equation \eqref{stringevolloose}. Here $\lvert \eta_s\rvert^2 = \ell^2$ for some $\ell$ which is constant in space but may depend on time. 

To find the equation for $\sigma$ we differentiate the constraint $\frac{1}{2}\partial_s \lvert \eta_s\rvert^2 = 0$ twice with respect to time to get $\partial_s( \langle \eta_{stt}, \eta_s\rangle + \lvert \eta_{st}\rvert^2) = 0$. Plugging in $\eta_{tt}$ from \eqref{stringevolloose}, we obtain 
$$ \partial_s\big( \ell^2 \sigma_{ss} - \lvert \eta_{ss}\rvert^2 + \lvert \eta_{st}\rvert^2\big) = 0,$$
which integrates to \eqref{stringconstraintloose}.

To prove we can actually solve \eqref{stringconstraintloose}, we consider the slightly modified Green function $G_{\ell}(s, x)$ satisfying 
$$\ell^2 \partial_s^2 G(s,x) - \lvert \eta_{ss}(s)\rvert^2 = -\delta(s-x), \qquad G(-1,x)=G(1,x)=0.$$
If we write the solution $\sigma$ in terms of this Green function, then $C$ is determined by the fact that the integral of $\sigma$ vanishes, i.e., that
 $$ \int_{-1}^1 \int_{-1}^1 G(s,x) \big( \lvert \eta_{tx}(x)\rvert^2 - C\big) \, dx \, ds = 0.$$ 
By Theorem \ref{greenthm}, $G(s,x)\ge 0$ for all $s$ and $x$ in the square $[-1,1]^2$ and is zero only on the boundary, which means we can always solve this equation for $C$.
It is easy to compute that $\frac{d^2}{dt^2} (\ell^2) = 2 C$.
\end{proof}

Although we can solve \eqref{stringconstraintloose}, it is far from clear that we can solve \eqref{stringevolloose}: the difficulty is that since $\sigma$ integrates to zero, it cannot be strictly positive, which 
means \eqref{stringevolloose} is always of mixed type, and hence substantially more difficult to analyze. We also lose positivity of the sectional curvature as in Theorem \ref{curvaturethm}. In addition, the fact that $C$ is always positive except in degenerate cases means that $\ell$ is always increasing, so that in particular $\Arc$ is not a totally geodesic submanifold of $\overline{\Arc}$. We leave these complicated issues aside for now however, noting merely that the situation is not at all improved by changing the boundary conditions as in Appendix \ref{circleappendix}.

\makeatletter \renewcommand{\@biblabel}[1]{\hfill#1.}\makeatother

\end{document}